\newtoks\prt
\newtheorem{thm}{Theorem}[section]
\newtheorem{lemma}[thm]{Lemma}
\newtheorem{prop}[thm]{Proposition}
\newtheorem{cor}[thm]{Corollary}
\newtheorem{obs}[thm]{Observation}
\theoremstyle{definition}
\newtheorem{remark}[thm]{Remark}
\def\eqn#1$$#2$${\begin{equation}\label#1#2\end{equation}}
 \def\J#1#2#3{ \left\{ #1,#2,#3 \right\} }
\def\1{\boldsymbol{1}}
\def\A{\mathcal A}
\def\w{\widehat}
\def\ce{\mathbb C}
\def\en{\mathbb N}
\def\er{\mathbb R}
\def\O{\mathbb O}
\def\Ha{\mathbb H}
\def\r{\boldsymbol{r}}
\def \a{\boldsymbol{a}}
\def \b{\boldsymbol{b}}
\def \e{\boldsymbol{e}}
\def \x{\boldsymbol{x}}
\def \y{\boldsymbol{y}}
\def \uu{\boldsymbol{u}}
\def \vv{\boldsymbol{v}}
\def \w{\boldsymbol{w}}
\def \p{\boldsymbol{p}}
\def \q{\boldsymbol{q}}
\def \1{\boldsymbol{1}}
\def \dt {\operatorname{dt}}
\def\span{\operatorname{span}}
\def \reg {\partial _{\kern1pt\text{reg}}}
\def\inv{\diamond}
\def\ip#1#2{\left\langle#1,#2\right\rangle}
\newcommand{\norm}[1]{\left\|#1\right\|}
\newcommand{\abs}[1]{\left|#1\right|}
\newcommand{\setsep}{;\,}
\definecolor{green}{rgb}{0,0.5,0}
\title{Determinants in Jordan matrix algebras}
\author[J. Hamhalter]{Jan Hamhalter}
\author[O.F.K. Kalenda]{Ond\v{r}ej F.K. Kalenda}
\author[A.M. Peralta]{Antonio M. Peralta}
\address[J. Hamhalter]{Czech Technical University in Prague, Faculty of Electrical Engineering, Department of Mathematics, Technicka 2, 166 27, Prague 6,
Czech Republic}
\email{hamhalte@fel.cvut.cz}
\address[O.F.K. Kalenda]{Charles University, Faculty of Mathematics and Physics, Department of
Mathematical Analysis, Sokolovsk{\'a} 86, 186 75 Praha 8, Czech Republic}
\email{kalenda@karlin.mff.cuni.cz}
\address[A.M. Peralta]{Instituto de Matem{\'a}ticas de la Universidad de Granada (IMAG), Departamento de An{\'a}lisis Matem{\'a}tico, Facultad de
Ciencias, Universidad de Granada, 18071 Granada, Spain.}
\email{aperalta@ugr.es}
\keywords{JB$^*$-algebra, Cartan factor of type $6$,  matrices of biquaternions, determinant, minimal projection}
\subjclass[2010]{46L70, 17C10, 17C40, 15A15}
\begin{document}

\begin{abstract}
We introduce a natural notion of determinant in matrix JB$^*$-algebras, i.e., for hermitian matrices of biquaternions and for hermitian $3\times 3$ matrices of complex octonions. We establish several properties of these determinants which are useful to understand the structure of the Cartan factor of type $6$. As a tool we provide an explicit description of minimal projections in the Cartan factor of type $6$ and a variety of its automorphisms. 
\end{abstract}

\maketitle

\section{Introduction}

Exceptional Cartan factors  are still enigmatic mathematical models for which several aspects of their sets of tripotents remain open. The so-called Cartan factor of type 6 is an example of JB$^*$-algebra, formed by hermitian $3\times 3$ matrices with entries in the algebra of complex octonions, which cannot be represented as a Jordan $^*$-subalgebra of any C$^*$-algebra. Despite the exceptional algebraic and analytic studies by Loos \cite{loos1977bounded}, more concrete descriptions of the tripotents  and the natural  relations among them are aspects which still admit certain margin of improvement.

A \emph{Jordan algebra} is a (real or complex) possibly non-associative algebra $B$ whose product (denoted usually by $\circ$) satisfies two axioms:
\begin{enumerate}[$(i)$]
    \item $\forall x,y\in B\colon x\circ y=y\circ x$ (commutativity);
    \item $\forall x,y\in B\colon x^2\circ(y\circ x)=(x^2\circ y)\circ x$ (the Jordan axiom).
\end{enumerate}
A canonical example is provided by any associative algebra $A$  equipped with the special Jordan product
\begin{equation}\label{eq:special Jordan product}
    x\circ y=\frac12(xy+yx),\quad x,y\in A.\end{equation}
More generally, any linear subspace of an associative algebra which is closed under the just defined Jordan product is a Jordan algebra. Such Jordan algebras are called \emph{special}. Jordan algebras which are not special, i.e., which cannot be embedded as a Jordan subalgebra of an associative algebra, are called \emph{exceptional}.

If a Jordan algebra $B$ is equipped by a complete norm satisfying
$$\forall x,y\in B\colon \norm{x\circ y}\le \norm{x}\cdot \norm{y},$$
it is called a \emph{Jordan Banach algebra}.

There are two closely related classes of Jordan Banach algebras -- a class of real ones called 
\emph{JB-algebras} and a class of complex ones called \emph{JB$^*$-algebras}. 

A JB-algebra is a real Jordan Banach algebra $B$ which additionally satisfies 
$$\forall x,y\in B\colon \norm{x}^2\le \norm{x^2+y^2};$$
while a JB*-algebra is a complex Jordan Banach algebra $B$ equipped with an involution $^*$ satisfying
$$\forall x\in B\colon \norm{x}^3=\norm{2(x^*\circ x)\circ x-x^2\circ x^*}.$$
These two classes are interrelated -- the self-adjoint part of a JB$^*$-algebra is a JB-algebra and, conversely, any JB-algebra is the self-adjoint part of some JB$^*$-algebra (cf. \cite{Wright1977}).

Any C$^*$-algebra becomes a JB$^*$-algebra if we equip it by the special Jordan product \eqref{eq:special Jordan product}. More generally, any closed subspace of a C$^*$-algebra which is stable under involution and the special Jordan product is a JB$^*$-algebra. Such algebras are called JC$^*$-algebras.

An important subclass of JB-algebras are Jordan matrix algebras studied for example in \cite[Section 2.7]{hanche1984jordan} from the algebraic point of view, it follows from \cite[Proposition 2.9.2 and Corollary 3.1.7]{hanche1984jordan} that they are JB-algebras under the natural norm. We will focus on matrix JB$^*$-algebras, which are the complex versions of the mentioned algebras. 

More specifically, we will deal with JB$^*$-algebras $H_n(\Ha_C)$, of all $n\times n$ hermitian matrices of biquaternions, and $H_3(\O)$, of all $3\times 3$ hermitian matrices of (complex) octonions. Note that $H_3(\O)$ is known as the \emph{Cartan factor of type 6} (denoted by $C_6$). It is an exceptional JB$^*$-algebra and, moreover, a prototype of such algebras. More precisely, $H_3(\O)$ is a quotient of an ideal in any exceptional JB$^*$-algebra (cf. \cite[Lemma 7.2.2 and Theorem 7.2.3]{hanche1984jordan} for a real version of this result).

We introduce and investigate determinants of elements of the mentioned matrix JB$^*$-algebras. These determinants have surprisingly many common properties with the classical determinants of complex matrices even though the standard formulae for determinants cannot be directly used.
Original motivation for this research comes from a deeper investigation of unitary elements in the Cartan factor of type 6, but we find the theory of determinants also interesting in itself.

Let us briefly describe the  contents of the paper:

In Section~\ref{sec:2} we provide some background information on JB$^*$-algebras and JB$^*$-triples. In Section~\ref{sec:CD-C6} we recall the Cayley-Dickson doubling process, the  construction of quaternions, biquaternions, (real and complex) octonions and of the Cartan factor of type 6. We also collect basic properties and representations of these structures.

In Section~\ref{sec:unitaries} we introduce determinants of unitary elements of $C_6$ (using their spectral decomposition) and formulate the main structure results on unitaries.
Our main motivation was originally the statement of Theorem~\ref{T:product}, which will be applied in a forthcoming paper to investigate certain relations on tripotents. We also point out Theorem~\ref{T:simultaneous biq}
which enables us to reduce many problems from matrices whose entries are octonions to matrices of biquaternions. These theorems are proved in the later sections.

In Section~\ref{sec:dt-n} we introduce the determinant of hermitian matrices of biquaternions of arbitrary order. It is done by an inductive formula. We also establish several properties of these determinants.

Section~\ref{sec:auto-oct} cointains some auxilliary results on automorphisms of the algebra of octonions. These results are applied in Section~\ref{sec:auto-c6} to get results on automorphism of the Cartan factor of type $6$.

In Section~\ref{sec:projections} we provide an explicit description of minimal projections in $C_6$.

In Section~\ref{sec:proofs} we prove the theorems formulated in Section~\ref{sec:unitaries} using the results of the four preceding sections.

Finally, in Section~\ref{sec:dt in C6} we extend the definition of the determinant to general elements of $C_6$ and establish its basic properties.

\section{Background}\label{sec:2}

In this section we provide some background information on JB$^*$-algebras and JB$^*$-triples which will be used throughout the paper. 

A  \emph{JB$^*$-triple} is a complex Banach space $E$  equipped with a (continuous) triple product $\{.,.,.\}: E^3 \to E$, which is symmetric and bilinear in the outer variables and conjugate-linear in the middle one, and satisfies the following algebraic--analytic axioms
(where given $a,b\in E$, $L(a,b)$ stands for the (linear) operator on $E$ given by $L(a,b)(x)=\J abx$, for all $x\in E$):
\begin{enumerate}[$(JB^*$-$1)$]
\item $L(x,y)L(a,b) = L(L(x,y)(a),b) $ $- L(a,L(y,x)(b))$ $+ L(a,b) L(x,y),$ for all $a,b,x,y\in E$; \hfill \emph{(Jordan identity)}
\item The operator $L(a,a)$ is a hermitian operator with nonnegative spectrum for each $a\in E$;
\item $\|\{a,a,a\}\|=\|a\|^3$ for $a\in E$.
\end{enumerate} 

Given $a\in E,$ the mapping $x\mapsto \{a,x,a\}$ will be denoted by $Q(a).$

Any C$^*$-algebra  $A$ becomes a JB$^*$-triple when equipped with the triple product
\begin{equation}\label{eq triple product for Cstar algebras} \{x,y,z\}=\frac12 (x y^* z+z y^* x)  \ \ (x,y,z\in A).\end{equation}
More generally, any closed subspace of a C$^*$-algebra which is stable under this triple product is a JB$^*$-triple (cf. \cite{harris1974bounded}). These particular types of JB$^*$-triples are called JC$^*$-triples. 

Further, every JB$^*$-algebra $B$ is a JB$^*$-triple under the triple product defined by \begin{equation}\label{eq triple product JBstar algebras} \J xyz = (x\circ y^*) \circ z + (z\circ y^*)\circ x - (x\circ z)\circ y^*
\end{equation} for all $x,y,z\in B$ (see \cite[Theorem 3.3]{braun1978holomorphic} or \cite[Lemma 3.1.6]{chubook},  \cite[Theorem 4.1.45]{Cabrera-Rodriguez-vol1}).

For each element $a$ in a Jordan algebra $B$ the symbol $U_a$ will stand for the linear mapping on $B$ defined by $U_a (x) = 2(a\circ x)\circ a - a^2 \circ x.$ The connection with the triple product given in \eqref{eq triple product JBstar algebras} affirms that $U_a(x^*) = \{ a, x, a\} = Q(a) (x)$ for all $a,x\in B.$ Further, 
if a C$^*$-algebra $A$ is regarded as a JB$^*$-algebra with respect to its natural Jordan product, we have $U_a (x) = a x a,$ for all $a,x\in A.$ 

A Jordan $^*$-homomorphism between JB$^*$-algebras is a linear mapping preserving Jordan products and involution. A triple homomorphism between JB$^*$-triples $E$ and $F$ is a linear mapping $\Phi :E\to F$ preserving triple products. A property increasing the attractiveness of JB$^*$-triples is a thoerem by Kaup showing that a linear bijection between two JB$^*$-triples is a triple isomorphism if and only if it is an isometry (see \cite[Proposition 5.5]{kaup1983riemann}). Let $\Phi : B_1\to B_2$ be a triple homomorphism between two unital JB$^*$-algebras. It is known that   
\begin{equation}\label{eq unital triple homomorphisms are algebra homomorphisms}\hbox{if $\Phi$ is unital (i.e. $\Phi(\1) = \1$), then $\Phi$ is a Jordan $^*$-homomorphism. }
\end{equation} This can be easily deduced from the fact that $\{a,\1,b\} = a\circ b$ and $a^* =\{\1,a,\1\}$, for all $a,b\in B_j$ (cf. the expression in \eqref{eq triple product JBstar algebras}). 

A {\em JBW$^*$-triple} is a JB$^*$-triple which is also a dual Banach space. A generalization of Sakai's theorem, proved by T. Barton and R. Timoney in \cite{BaTi}, asserts that each JBW$^*$-triple admits a unique (isometric) predual and its triple product is separately weak$^*$ continuous. 

Similarly, a {\em JBW$^*$-algebra} is a JB$^*$-algebra which is also a dual Banach space. Again, each JBW$^*$-algebra admits a unique (isometric) predual, the involution is weak$^*$-to-weak$^*$ continuous and the Jordan product is separately weak$^*$-to-weak$^*$ continuous. 

In the present paper we will work only with finite-dimensional JB$^*$-algebras. They are obviously dual spaces, hence the theory of JBW$^*$-algebras (and JBW$^*$-triples) may be applied.

JBW$^*$-triples may be represented in a rather explicit form, see \cite{horn1987classification,horn1988classification}. An important role in this representation is played by the so-called \emph{Cartan factors}, which are rather
concrete and illustrative examples of JBW$^*$-triples.
There are six types of Cartan factors (see, e.g., \cite[Section 7.1.1]{Cabrera-Rodriguez-vol2}). In this note we will work only with Cartan factors of types $4$ and $6$.

A Cartan factor of type 4 (also known as a \emph{spin factor}) is a JB$^*$-triple given by a complex Hilbert space (with inner product $\langle \cdot,\cdot \rangle$) equipped with a conjugation $x \mapsto \overline{x}$, triple product $$\J x y z
= \langle x , y \rangle z + \langle z , y \rangle x - \langle x ,
\overline{z} \rangle  \overline{y},$$ and norm given by $\| x\|^2=\langle x , x
\rangle+\sqrt {\langle x , x \rangle^2-|\langle x , \overline x
\rangle|^2}$.

The Cartan factor of type $6$ is the $27$-dimensional JB$^*$ algebra $H_3(\O)$ of hermitian $3\times 3$ matrices with entries in the
eight-dimensional complex algebra $\O$ known as the Cayley numbers or octonions (note that $\O$ is the eight-dimensional spin factor with an additional structure, see subsections \ref{subsect: real and complex octonions} and \ref{subsec: H3O} for detailed presentations).

A projection in a JB$^*$-algebra is a self-adjoint idempotent, i.e., an element $p$ satisfying $p^*=p$ and $p\circ p=p$. There is a natural partial order on projections ($p\le q$ if $p\circ q=p$). In a JBW$^*$-algebra projections form a complete lattice.

The notion of projection makes no sense in the setting of JB$^*$-triples. An appropriate alternative can be found in the concept of tripotent. An element $e$ in a JB$^*$-triple $E$ is said to be a \emph{tripotent} if $e= \J eee$. Every projection in a C$^*$-algebra $A$ is a tripotent when the latter is regarded as a JB$^*$-triple. However the set of tripotents in $A$ is, in general, bigger as it coincides with the collection of all partial isometries in $A$. 

Different properties of tripotents can be defined in terms of the Peirce decomposion that each of them defines. Concretely, if we fix a tripotent $e$ in a JB$^*$-triple $E$, the whole space decomposes in the form $$E= E_{2} (e) \oplus E_{1} (e)\oplus E_0 (e),$$ where, for $i\in \{0,1,2\},$ $E_i(e)$ is the eigenspace of the operator $L(e, e)$ corresponding to the eigenvalue $\frac{i}{2}.$ This decomposition is known as the \emph{Peirce decomposition} of $E$ relative to $e$ (cf. \cite{Friedman-Russo}, \cite[Definition 1.2.37]{chubook} or \cite[\S 4.2.2, \S 5.7]{Cabrera-Rodriguez-vol1, Cabrera-Rodriguez-vol2}).

If $A$ is a C$^*$-algebra, regarded as a JB$^*$-triple,  and $e\in E$ a tripotent (i.e., a partial isometry) the Peirce decomposition is given by $$A_2(e) = ee^* A e^* e, A_1(e) = (1-ee^*) A e^*e 
\oplus ee^* A (1-e^* e),$$ and $A_0(e) =  (1-ee^*) A  (1-e^* e).$ 

The Peirce-2 subspace, $E_2 (e),$ associated with a tripotent $e$ in a JB$^*$-triple $E$ is a unital JB$^*$-algebra with unit $e$, Jordan product $a\circ_{e} b := \{ a,e,b\}$ and involution $a^{*_e} := \J eae$ (cf. \cite[\S 1.2 and Remark 3.2.2]{chubook}  or \cite[Corollary~4.2.30]{Cabrera-Rodriguez-vol1}). This should be compared with the fact that every JB$^*$-algebra is a JB$^*$-triple with respect to the product \eqref{eq unital triple homomorphisms are algebra homomorphisms}.
By Kaup's theorem (see \cite[Proposition 5.5]{kaup1983riemann}) the triple product on $E_2 (e)$ is uniquely determined by the expression
\begin{equation}\label{eq product Peirce2 as JB*-algebra} \{ a,b,c\} =(a \circ_{e} b^{*_e}) \circ_{e} c +(c \circ_{e} b^{*_e}) \circ_e a - (a \circ_e c) \circ b^{*_e},
\end{equation}
for every $a,b,c\in E_2 (e)$. Therefore, unital JB$^*$-algebras are in one-to-one correspondence with JB$^*$-triples
admitting a unitary element. 

A tripotent $e$ in $E$ is called \emph{complete} (respectively \emph{minimal}) if $E_0 (e) =\{0\}$ (respectively, $E_2(e) = \mathbb{C} e\neq \{0\}$). If $E= E_2(e)$, or equivalently, if $\{e,e,x\}={x}$ for all $x\in E$, we say that $e$ is \emph{unitary}. We recall that two tripotents $e,v$ in $E$ are \emph{orthogonal} (denoted by $e\perp v$) if $\{e,e,v\}=0$ ($\Leftrightarrow$ $\{v,v,e\}=0$ $\Leftrightarrow$ $e\in E_0(v)$ $\Leftrightarrow$ $v\in E_0(e)$).  We shall say that $e$ is a \emph{finite-rank tripotent} if $e$ can be written as a finite sum of mutually orthogonal minimal tripotents in $E$. The relation of orthogonality can be employed to define a partial order on the set of tripotents given by $e\leq u$ if $u-e$ is a tripotent orthogonal to $e$. There are several equivalent reformulations of this partilal order (cf., for example, those gathered in \cite[\S 6.3]{hamhalter2019mwnc}), one of then says that $e\leq u$ if and only if $e$ is a projection in the JB$^*$-algebra $E_2(u)$.  

There is a wider notion of orthogonality for general elements in a JB$^*$-triple $E$. Namely, elements $a,b$ in $E$ are said to be \emph{orthogonal} (written $a\perp b$) if $L(a,b) =0$ (see \cite[Lemma\ 1]{BurgosFerGarMarPe2008} for additional details). Clearly this notion of orthogonality coincides with the usual one when it is restricted to the set of tripotents. A subset $\mathcal{S}\subseteq  E$ is called \emph{orthogonal} if $0\notin \mathcal{S}$ and $x\perp y$ for every $x\neq y$ in $\mathcal{S}$. The \emph{rank} of $E$ (denoted by rank$(E)$) is the minimal cardinal $r$ satisfying that $\sharp \mathcal{S}\leq r$ for every orthogonal subset of $E$. The rank of a tripotent $e$ in $E$ is defined as the rank of the JB$^*$-triple $E_2(e)$, and it is finite precisely when $e$ is a finite-rank tripotent. It is shown in \cite{BeLoPeRo2004} that a JB$^*$-triple has finite rank if and only if it is reflexive and that occurs if and only if it is a finite direct sum of finite rank Cartan factors. 

Every family $\{e_i\}_{i\in \Lambda}$ of mutually orthogonal tripotents in a JBW$^*$-triple $W$ is summable with respect to the weak$^*$-topology. Actually, the weak$^*$-limit of the sum $e= \hbox{w$^*$}-\sum_{i\in \Lambda} e_i\in W$ is another tripotent in  $W$ satisfying $e\geq e_i$ for all $i\in \Lambda$ (cf. \cite[Corollary 3.13]{horn1987ideal}). 

We state next the well known fact that the $U$-operator associated with a unitary element in a unital JB$^*$-algebra is a surjective linear isometry and hence a triple isomorphism.

\begin{lemma}\label{L:shift}{\rm (\cite[Proposition 4.3]{braun1978holomorphic} or \cite[Theorem 4.2.28$(vii)$]{Cabrera-Rodriguez-vol1})}
Let $B$ be a unital JB$^*$-algebra and let $u\in B$ be a unitary element. Then the mapping $T:B\to B$ defined by
$$T(x)=\J u{x^*}u = U_u (x),\quad x\in B$$
is a triple automorphism of $B$.
\end{lemma}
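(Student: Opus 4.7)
The plan is to reduce everything to Kaup's theorem recalled in Section~\ref{sec:2}, which asserts that a linear bijection between JB$^*$-triples is a triple isomorphism iff it is an isometry. Thus it suffices to verify that $T$ is a linear bijection of $B$ onto itself that preserves the norm.

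First, I would rewrite $T$ in the $U$-operator form. The identity $U_a(x^*)=\{a,x,a\}=Q(a)(x)$, recorded in Section~\ref{sec:2}, applied with $x$ replaced by $x^*$, gives $T(x)=\{u,x^*,u\}=U_u(x)$. This is convenient because $U_u$ is the natural object on which Jordan algebra theory operates.

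Next I would prove bijectivity. Since $u$ is unitary, it is invertible in $B$ with $u^{-1}=u^*$. The standard Jordan-algebraic identity (a consequence of the fundamental formula) then yields
\[
U_u\,U_{u^*}=U_{u^*}\,U_u=\mathrm{id}_B.
\]
To see this cheaply from scratch one can verify $U_u(u^*)=2(u\circ u^*)\circ u-u^2\circ u^*=2u-u=u$ and $U_u(\mathbf{1})=u^2$, and then invoke the fact that in a unital Jordan algebra $U_u U_{u^{-1}}=\mathrm{id}$ whenever $u$ is invertible. In particular $T=U_u$ is a linear bijection with inverse $U_{u^*}$.

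Then I would check that $T$ is an isometry. The contractivity of the triple product in any JB$^*$-triple, $\|\{a,b,c\}\|\le\|a\|\,\|b\|\,\|c\|$, combined with $\|u\|=\|u^*\|=1$ (unitaries have norm one, and the involution is isometric in a JB$^*$-algebra), gives
\[
\|T(x)\|=\|\{u,x^*,u\}\|\le \|x\|.
\]
Applying the same estimate to $U_{u^*}$ and using $x=U_{u^*}(T(x))$ yields $\|x\|\le\|T(x)\|$, so $T$ preserves the norm. Kaup's theorem then finishes the argument, and since $T$ maps $B$ onto itself it is a triple automorphism.

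The main obstacle, such as it is, is the algebraic identity $U_uU_{u^*}=\mathrm{id}_B$, which rests on the Jordan-algebraic fundamental formula for invertible elements; everything else is the contractivity of the triple product and a direct appeal to Kaup's theorem.
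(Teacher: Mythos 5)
Your proof is correct, but it reaches the key intermediate claim --- that $T$ is a linear isometric bijection --- by a different route than the paper. The paper simply observes that $T(x)=(x^*)^{*_u}$, i.e.\ $T$ is the composition of the standard involution of $B$ with the involution $a\mapsto\J{u}{a}{u}$ of the unital JB$^*$-algebra $B_2(u)=B$ with unit $u$; since each of these is an isometric conjugate-linear bijection, their composition is a linear isometric bijection, and Kaup's theorem finishes the argument. You instead establish bijectivity via the fundamental formula (giving $U_uU_{u^*}=U_{u^*}U_u=\mathrm{id}_B$, using that $u^{-1}=u^*$ for a unitary) and isometry via the contractivity of the triple product $\norm{\J{a}{b}{c}}\le\norm{a}\,\norm{b}\,\norm{c}$ together with $\norm{u}=\norm{u^*}=1$. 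Both steps are sound, but each imports a nontrivial external fact not stated in the paper: the fundamental-formula consequence $U_uU_{u^{-1}}=\mathrm{id}$ for invertible elements, and the Friedman--Russo norm inequality for the triple product. The paper's route avoids both, at the price of using that the Peirce-2 space of a tripotent carries a JB$^*$-algebra structure whose involution is $a\mapsto\J{u}{a}{u}$ --- a fact already recorded in Section~2. One small remark: your explicit computation $U_u(u^*)=u$ is consistent with, but does not by itself prove, $U_uU_{u^*}=\mathrm{id}$; it is your appeal to the general invertibility fact that actually carries that step, and that appeal is legitimate.
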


\begin{proof}
Note that
$$T(x)=(x^*)^{*_u},$$
hence $T$ is a linear isometric bijection of $B$. It follows from Kaup's theorem \cite[Proposition 5.5]{kaup1983riemann} that it is a triple automorphism.
\end{proof}

The following proposition also gathers some properties which are part of the folklore in JB$^*$-algebra theory. 

\begin{prop}\label{P:spectral}
Let $B$ be a finite-dimensional JB$^*$-algebra. Then the following assertions are valid.
\begin{enumerate}[$(a)$]
    \item  $B$ is unital.
    \item Any unitary element $u\in A$ may be expressed as
    $$u=\alpha_1 p_1+\dots+\alpha_n p_n,$$
    where $p_1,\dots,p_n$ are mutually orthogonal projections in $B$ with sum equal to $1$ and $\alpha_1,\dots,\alpha_n$ are distinct complex units.
    
    Moreover, this representation is unique up to reordering.
    \item Any self-adjoint element $x\in A$ may be expressed as
    $$x=\alpha_1 p_1+\dots+\alpha_n p_n,$$
    where $p_1,\dots,p_n$ are mutually orthogonal projections in $B$ with sum equal to $1$ and $\alpha_1,\dots,\alpha_n$ are distinct 
    real numbers.
    
    Moreover, this representation is unique up to reordering.
\end{enumerate}
\end{prop}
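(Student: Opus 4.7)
The plan is to dispatch $(a)$ via a general structural result and to reduce $(b)$ and $(c)$ to the finite-dimensional commutative associative case, which via Gelfand representation is simply $\mathbb{C}^n$. For $(a)$, since $B$ is finite-dimensional it is reflexive and hence a JBW$^*$-algebra; every JBW$^*$-algebra is unital, giving $\1\in B$.

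For $(c)$, given a self-adjoint $x\in B$ I would consider the (norm-closed) JB$^*$-subalgebra $C$ generated by $x$ and $\1$. By the single-variable continuous functional calculus valid in any JB$^*$-algebra, $C$ is associative and commutative, hence a finite-dimensional commutative C$^*$-algebra isometrically $^*$-isomorphic to some $\mathbb{C}^n$ via Gelfand representation. Under this identification $x$ corresponds to a real tuple $(\beta_1,\dots,\beta_n)$; collecting equal coordinates yields distinct reals $\alpha_1,\dots,\alpha_m$ whose indicator functions pull back to mutually orthogonal projections $p_1,\dots,p_m\in C\subseteq B$ with $\sum_j p_j=\1$ and $x=\sum_j\alpha_j p_j$. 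Uniqueness up to reordering is inherited from the uniqueness of this decomposition in $\mathbb{C}^n$: any valid pair $(p_j,\alpha_j)$ forces the $p_j$ to lie in $C$ and there to coincide with the spectral projections of $x$.

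For $(b)$ the argument is parallel, but requires the extra step that the JB$^*$-subalgebra $C$ generated by $u, u^*, \1$ is associative and commutative. The unitarity conditions $u\circ u^*=\1$ and $U_u(u^*)=u$ imply that $u$ is Jordan-normal, i.e.\ that its real and imaginary parts $\tfrac{u+u^*}{2}$ and $\tfrac{u-u^*}{2i}$ operator-commute, and the two-variable functional calculus for an operator-commuting pair of self-adjoints then gives associativity and commutativity of $C$. Gelfand representation yields $C\cong \mathbb{C}^n$, under which $u$ becomes an $n$-tuple of complex units (because $u\circ u^*=\1$ forces each $|\beta_i|=1$), and the desired decomposition and its uniqueness follow exactly as in $(c)$. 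The main technical point — and essentially the only non-routine one — is the justification of this continuous functional calculus for unitaries, i.e.\ the associativity and commutativity of $C$; this belongs to the standard JB$^*$-algebra toolkit (see e.g.\ \cite{hanche1984jordan, Cabrera-Rodriguez-vol1}), so once it is cited the entire proposition reduces to spectral bookkeeping in $\mathbb{C}^n$.
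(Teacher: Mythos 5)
Your proof is correct and follows essentially the same route as the paper: reduce each part to the finite-dimensional commutative associative C$^*$-algebra generated by the element (hence to $\ce^n$) and read off existence and uniqueness of the spectral decomposition there. The only cosmetic differences are that the paper disposes of $(a)$ by citing \cite[Lemma 4.1.7]{hanche1984jordan} rather than via reflexivity, and it simply asserts the associativity of the subalgebra generated by a unitary, which you justify in slightly more detail.
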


\begin{proof}
Assertion $(a)$ follows e.g. from Lemma 4.1.7 in  \cite{hanche1984jordan}.

$(b)$  Let $N$ be the Jordan $*$-subalgebra of $B$ generated by $u$. Then $N$ contains $u^*$ and $1$ and is associative. So, it is a finite-dimensional commutative C$^*$-algebra. Hence, it is $*$-isomorphic to $\ce^n$ for some $n\in\en$. In particular, $u$, being unitary, is of the form $(\alpha_1,\dots,\alpha_n)$, where the coordinates are complex units. This proves the existence of a representation of the required form.

Further, assume that $u$ is represented in this form. Let $M$ be the Jordan $*$-subalgebra of $B$ generated by the projections $p_1,\dots,p_n$. Then $M$ is a finite-dimensional commutative C$^*$-algebra containing $N$. Moreover, the spectrum of $x$ in $M$ is
$\sigma_M(x)=\{\alpha_1,\dots,\alpha_n\}$. It is finite, hence $\sigma_N(x)=\sigma_M(x)$. We conclude by the uniqueness of the spectral decomposition.

$(c)$ The proof is completely analogous to the proof of $(b)$, except that $\alpha_1,\dots,\alpha_n$ are real numbers.
\end{proof}

\begin{lemma}\label{L:sqrt}
Let $B$ be a finite-dimensional JB$^*$-algebra and let $u\in B$ be a unitary element. Then there is a unitary element $v\in B$ such that $v^2=u$.
\end{lemma}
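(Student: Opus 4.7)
The plan is to construct $v$ explicitly via the spectral decomposition of $u$ guaranteed by Proposition~\ref{P:spectral}(b). Write
\[
u = \alpha_1 p_1 + \dots + \alpha_n p_n,
\]
with $p_1,\dots,p_n$ mutually orthogonal projections summing to $\1$ and $\alpha_j$ complex units. For each $j$ choose a complex unit $\beta_j$ with $\beta_j^2 = \alpha_j$ and set
\[
v := \beta_1 p_1 + \dots + \beta_n p_n.
\]
The claim is that $v$ is unitary with $v^2 = u$.

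The key observation that makes the computation routine is the device already used in the proof of Proposition~\ref{P:spectral}(b): the Jordan $*$-subalgebra $M\subseteq B$ generated by $p_1,\dots,p_n$ is a finite-dimensional commutative (hence associative) C$^*$-subalgebra, $*$-isomorphic to $\ce^n$ in such a way that $p_j$ corresponds to the $j$-th canonical projection. Inside $M$ everything reduces to coordinatewise calculations, so $v^* = \sum_j \overline{\beta_j} p_j$, $v\circ v^* = \sum_j |\beta_j|^2 p_j = \1$, $v^2 = \sum_j \beta_j^2 p_j = u$, and $v^2\circ v^* = \sum_j \beta_j p_j = v$. The last two identities show that $v$ is unitary in $M$; since $M$ is a unital Jordan $*$-subalgebra of $B$ sharing the same unit, $v$ is also unitary in $B$. (Alternatively, one can simply reapply Proposition~\ref{P:spectral}(b) in reverse: an element admitting such a spectral form with unimodular coefficients is unitary.)

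There is essentially no obstacle here; the only point worth checking carefully is that unitarity really transfers from $M$ to $B$, which is immediate because the defining relations $v\circ v^* = \1$ and $v^2\circ v^* = v$ hold in $M$ and the inclusion $M\hookrightarrow B$ is a unital Jordan $*$-homomorphism. No appeal to deeper structure theory (beyond what is already in Proposition~\ref{P:spectral}) is required.
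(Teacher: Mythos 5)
Your proposal is correct and follows exactly the paper's own (one-line) argument: take the spectral decomposition of $u$ from Proposition~\ref{P:spectral}$(b)$ and replace each coefficient by a square root. You merely spell out the routine verification (inside the associative subalgebra generated by the projections) that the paper leaves implicit.
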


\begin{proof}
It is enough to consider the representation from Proposition~\ref{P:spectral}$(b)$ and take some square roots of the coefficients $\alpha_1,\dots,\alpha_n$.
\end{proof}

\section{Construction and basic properties of the Cartan factor of type 6}\label{sec:CD-C6}

In this section we recall the definition of the Cartan factor of type $6$ and its basic properties. Usually it is represented as the JB$^*$-algebra of hermitian $3\times 3$ matrices of complex octonions. In order to present its structure we first need to recall the Cayley-Dickson doubling process (see, e.g., \cite[\S6.1.30]{Cabrera-Rodriguez-vol2} for an abstract approach). We will restrict ourselves to a special case and we will use the notation from \cite{Finite}.

\subsection{Cayley-Dicskon doubling process starting from $\ce$ or $\er$}\label{subsec: Cayley-Dickson process}

By induction we define (in general non-associative) complex algebras $\A_n$ for $n\ge0$ equipped with a product $\boxdot_n$, conjugation $\overline{\  \cdot \ }$ and two involutions -- a linear one denoted by $^{\inv_n}$ and a conjugate-linear one denoted by $^{*_n}$.

$\A_0$ is the complex field, i.e., $\A_0=\ce$, $\boxdot_0$ is the standard multiplication of complex numbers, $^{\inv_0}$ is the identity and $^{*_0}$ is the complex conjugation. 

Given the structure defined on $\A_n$ we define the corresponding product, conjugation and involutions on $\A_{n+1}=\A_n\times \A_n$ as follows:
$$\begin{aligned}
\overline{(x_1,x_2)}&=(\overline{x_1},\overline{x_2}),\\
(x_1,x_2)^{\inv_{n+1}}&=(x_1^{\inv_n},-x_2),\\
(x_1,x_2)^{*_{n+1}}&=(x_1^{*_n},-\overline{x_2},)\\
(x_1,x_2)\boxdot_{n+1}(y_1,y_2)&=(x_1\boxdot_n y_1-y_2\boxdot_n x_2^{\inv_n},x_1^{\inv_n} \boxdot_n y_2+y_1\boxdot_n x_2)
\end{aligned}$$
for $(x_1,x_2), (y_1,y_2)\in \A_n\times \A_n=\A_{n+1}$.

It is well known that $\A_n$ is a (possibly) non-associative complex algebra (i.e., the mapping $(x,y)\mapsto x\boxdot_n y$ is bilinear) of dimension $2^n$, $^{*_n}$ is a conjugate linear involution and $^{\inv_n}$ is a linear involution. Moreover, $\A_n$ is a subalgebra of $\A_{n+1}$ (if $a\in \A_n$ is identified with $(a,0)\in\A_{n+1}$. These basic properties together with some more are summarized in \cite[Lemma 6.5]{Finite}.
 
We observe that the two involutions are related with the conjugation in such a way that for any $x\in\A_n$ we have
$$x^{*_n}=\overline{x}^{\inv_n}=\overline{x^{\inv_n}},\quad x^{\inv_n}=\overline{x}^{*_n}=\overline{x^{*_n}},\quad \overline{x}=(x^{\inv_n})^{*_n}=(x^{*_n})^{\inv_n}.$$
 
There is also a real variant of this process which produces (in general non-associative) real algebras $(\A_n)_R$ for $n\ge0$ equipped with a product $\boxdot_n$ and a linear involution $^{\inv_n}$. There are two equivalent ways to get them -- either we set $(\A_0)_R=\er$, the product $\boxdot_0$ is then the multiplication of real numbers and $^{\inv_0}$ is the identity and we define inductively $(\A_{n+1})_R=(\A_n)_R\times(\A_n)_R$ with the operations $\boxdot_{n+1}$ and $^{\inv_{n+1}}$ defined by the above formula; or we take $(\A_n)_R$ to be $\er^{2^n}$ consider as the canonical real-linear subspace of $\A_n$ with inherited operations, i.e.,
$$(\A_n)_R=\{x\in\A_n\setsep \overline{x}=x\}.$$
In this way $(\A_n)_R$ is a real subalgebra of $\A_{n}$. To simplify the notation, in the sequel we will omit the index $n$ at the two involutions on $\A_n$.

There are two natural norms on $\A_n$. The first one is the Hilbertian norm coming from the standard inner product (note that $\A_n$ may be canonically identified with $\ce^{2^n}$). The inner product will be denoted by $\ip{\cdot}{\cdot}$ and the resulting norm by $\norm{\cdot}_2$. 

The second one is the norm of the spin factor defined by the formula
$$\norm{x}^2=\norm{x}_2^2+\sqrt{\norm{x}_2^4-\abs{\ip{x}{\overline{x}}}^2}.$$
Note that on $(\A_n)_R$ the two norms coincide.

If $\A_n$ is equipped with the above-defined norm  $\norm{\cdot}$ and the triple product
$$\J xyz=\ip{x}{y}z+\ip{z}{y}x-\ip{x}{\overline{z}}\overline{y},$$
it becomes a JB$^*$-triple (called a spin factor, cf. Section~\ref{sec:2}). Moreover, the algebra $\A_n$ has a unit $1\in\A_0\subset\A_n$. It is a unitary element of the above-defined JB$^*$-triple, so it produces a structure of a unital JB$^*$-algebra on $\A_n$. The interplay of these structures is summarized in the following lemma which follows from \cite[Lemma 6.6]{Finite}.

\begin{lemma}\label{L:CD} Let $n$ be a non-negative integer.
\begin{enumerate}[$(a)$]
    \item  $\ip{x}{y}=\ip{x^\inv}{y^\inv}=\frac12(x\boxdot_n y^*+\overline{y}\boxdot_n x^\inv)$ for $x,y\in\A_n$.
    \item $x^*=\J 1x1$ for $x\in \A_n$, hence the involution $^*$ coincides with the involution on $\A_n$ from its structure of JB$^*$-algebra.
    \item $x\circ y=\J x1y=\frac12(x\boxdot_n y+y\boxdot_n x)$ for $x,y\in \A_n$.
    \item $\ip{x}{y}=\ip{x^\inv}{y^\inv}=\frac12(x\circ y^*+\overline{y}\circ x)$ for $x,y\in\A_n$.
\end{enumerate}
\end{lemma}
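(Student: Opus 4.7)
The plan is to prove (a) by induction on $n$, then derive (b), (c), (d) from (a) combined with the explicit formula for the spin-factor triple product
$$\J xyz=\ip xy z+\ip zy x-\ip x{\overline z}\overline y$$
and the defining relations $x\circ y=\J x 1 y$, $x^*=\J 1 x 1$ coming from the unital JB$^*$-algebra structure on $\A_n$. The underlying algebraic identities are already collected in \cite[Lemma 6.6]{Finite}; our task is simply to repackage them into the stated form.

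\textbf{Step 1 (identity (a)).} At $n=0$ one has $\boxdot_0=\cdot$, $^{\inv_0}=\operatorname{id}$, $^{*_0}=\overline{\,\cdot\,}$ and $\ip{x}{y}=x\overline y$, and the identity reduces to $x\overline y=\tfrac12(x\overline y+\overline y x)$, which is trivial. For the inductive step, write $x=(x_1,x_2)$ and $y=(y_1,y_2)$ in $\A_{n+1}=\A_n\times\A_n$. Using the doubling formulas for $^{*_{n+1}}$, $^{\inv_{n+1}}$ and $\boxdot_{n+1}$, a direct expansion shows that the ``second-component'' contributions to $x\boxdot_{n+1} y^{*}+\overline y\boxdot_{n+1}x^{\inv}$ cancel, while the ``first-component'' contributions assemble into
$$(x_1\boxdot_n y_1^{*}+\overline{y_1}\boxdot_n x_1^{\inv})+(x_2\boxdot_n y_2^{*}+\overline{y_2}\boxdot_n x_2^{\inv}).$$
Applying the induction hypothesis to each summand and using that the inner product on $\A_{n+1}$ is the orthogonal direct sum of the inner products on the two copies of $\A_n$ gives $2(\ip{x_1}{y_1}+\ip{x_2}{y_2})=2\ip{x}{y}$. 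The symmetry $\ip{x}{y}=\ip{x^{\inv}}{y^{\inv}}$ is immediate from the inductive definition $x^{\inv_{n+1}}=(x_1^{\inv},-x_2)$, which preserves Hilbert norms componentwise.

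\textbf{Step 2 (identities (b) and (c)).} Substituting $x=z=1$ in the spin triple product formula yields $\J 1 y 1=2\ip{1}{y}-\overline y$. Applying (a) with $x=1$ and using $1^{\inv}=\overline 1=1$ gives $\ip 1 y=\tfrac12(y^{*}+\overline y)$, hence $\J 1 y 1=y^{*}$, proving (b). Setting $y=1$ in the triple-product formula gives $\J x 1 z=\ip{x}{1}z+\ip{z}{1}x-\ip{x}{\overline z}$. Expressing $\ip{x}{1}$, $\ip{z}{1}$ and $\ip{x}{\overline z}$ via (a) (using $\overline z^{*}=z^{\inv}$), and noting that the scalar part $\tfrac12(x+x^{\inv})\in\A_0$ acts on $z$ via $\boxdot_n$, the $^{\inv}$-terms cancel after substitution, leaving $\tfrac12(x\boxdot_n z+z\boxdot_n x)$, which is (c).

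\textbf{Step 3 (identity (d)).} This is a rewriting of (a) in terms of the Jordan product. Combining (a) with the symmetry $\ip{x}{y}=\ip{x^{\inv}}{y^{\inv}}$ from Step 1, one averages the two resulting $\boxdot$-expressions for $\ip{x}{y}$ and regroups them into symmetric pairs of the form $\tfrac12(a\boxdot_n b+b\boxdot_n a)=a\circ b$, which yields the stated form. The main obstacle is the bookkeeping in Step~1: the two involutions $^{\inv}$ and $^{*}$, conjugation, and the doubling formula itself interact, and one must track signs and the order of arguments carefully through the recursion. Once (a) is secured, everything else is a mechanical translation between inner product, triple product and Jordan product.
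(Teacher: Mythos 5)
Your proposal is considerably more detailed than the paper itself, which disposes of this lemma by simply citing \cite[Lemma 6.6]{Finite} without proof. Your treatment of $(a)$, $(b)$ and $(c)$ is correct: the induction in Step 1 works exactly as you describe (the second components of $x\boxdot_{n+1}y^{*}$ and $\overline{y}\boxdot_{n+1}x^{\inv}$ cancel because $y_1^{*}=\overline{y_1}^{\inv}$, and the first components regroup using $\overline{y_2}^{\inv}=y_2^{*}$), and the derivations of $(b)$ and $(c)$ from the spin triple product are sound, the cancellation in $(c)$ resting on the fact that $x+x^{\inv}$ lies in $\A_0$ and is therefore central.

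The problem is Step 3. Averaging the two expressions coming from $(a)$, namely $\ip{x}{y}=\frac12(x\boxdot y^{*}+\overline{y}\boxdot x^{\inv})$ and $\ip{x}{y}=\ip{x^{\inv}}{y^{\inv}}=\frac12(x^{\inv}\boxdot\overline{y}+y^{*}\boxdot x)$ (using $(y^{\inv})^{*}=\overline{y}$ and $\overline{y^{\inv}}=y^{*}$), regroups into the symmetric pairs $(x,y^{*})$ and $(\overline{y},x^{\inv})$, i.e.\ it yields
$$\ip{x}{y}=\tfrac12\bigl(x\circ y^{*}+\overline{y}\circ x^{\inv}\bigr),$$
\emph{not} the stated $\tfrac12(x\circ y^{*}+\overline{y}\circ x)$. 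The two are genuinely different: for $x=y=(0,1)\in\A_1$ one has $\ip{x}{y}=1$ and $x\circ y^{*}=1$, but $\overline{y}\circ x=x\boxdot x=-1$, so the right-hand side as printed equals $0$. In other words, identity $(d)$ as stated is false (it is evidently a misprint for $\overline{y}\circ x^{\inv}$, matching the pattern of $(a)$), and your regrouping, carried out correctly, proves that corrected identity rather than ``the stated form''. As written, your Step 3 asserts a conclusion that its own computation does not deliver; you should either record the corrected formula explicitly and note the discrepancy with the statement, or explain how to pass from $\overline{y}\circ x^{\inv}$ to $\overline{y}\circ x$ --- which is impossible in general.
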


\subsection{The first two steps -- towards quaternions and biquaternions}\label{subsec: quaternions and biquaternions}
 Although algebras $\A_n$ may be defined for any $n\ge0$, the most important ones are those for $n=0,1,2,3$.  We start by recalling their basic properties for $n\le2$.

\begin{enumerate}[$(1)$]
    \item By the very definition $\A_0$ is the commutative field of complex numbers and $(\A_0)_R$ is the commutative field of real numbers.

\item $(\A_1)_R$ is canonically isomorphic the the complex field -- the isomorphism is $(x_1,x_2)\mapsto x_1+i x_2$. The involution $^\inv$ then corresponds to the complex conjugation.

So, both $\A_0$ and $(\A_1)_R$ are isomorphic to the complex field, but $\A_0$ as a complex algebra with the conjugate-linear involution given by the complex conjugation and $(\A_1)_R$ as a real algebra with the linear involution given by the complex conjugation.

\item (cf. \cite[Lemma 6.7$(iv)$]{Finite}) $\A_1$ is a commutative C$^*$-algebra $*$-isomorphic to $\ce\oplus\ce$. The witnessing isomorphism is
$(x_1,x_2)\mapsto (x_1+i x_2,x_1-i x_2):\A_1\to  \ce\oplus\ce $. In this representation  the conjugation and involutions on $\A_1$ are then defined by
$$\begin{aligned}
\overline{(x_1,x_2)}&=(\overline{x_2},\overline{x_1})\\
(x_1,x_2)^\inv&=(x_2,x_1),\\ (x_1,x_2)^*&=(\overline{x_1},\overline{x_2})
\end{aligned}$$
for $(x_1,x_2)\in\ce\oplus_\infty\ce$.

Moreover, we have the following identifications:
$$\begin{aligned}
\A_0&=\{(x,x)\setsep x\in\ce\},\\
(\A_0)_R&=\{(x,x)\setsep x\in\er\},\\
(\A_1)_R&=\{(x,\overline{x})\setsep x\in\ce\}.
\end{aligned}$$

\item $(\A_2)_R$ is canonically isomorphic to the non-commutative fields of quaternions. It is usually denoted by $\Ha$.

\item (cf. \cite[Lemma 6.7$(i),(ii)$]{Finite}) $\A_2$ is canonically isomorphic to the algebra of biquaternions -- quaternions with complex coefficients. We will denote it by $\Ha_C$. It is a non-commutative C$^*$-algebra which is $*$-isomorphic to $M_2$, the algebra of $2\times 2$ complex matrices. A witnessing isomorphism is given by
$$(x_1,x_2,x_3,x_4)\mapsto \begin{pmatrix}
x_1+i x_2 & x_3+i x_4 \\ -x_3+i x_4 & x_1-i x_2
\end{pmatrix}.$$
In this identification the conjugation and the two involutions on $\Ha_C$ are described by
$$\begin{aligned}
\overline{\begin{pmatrix}a&b\\c& d\end{pmatrix}}&=\begin{pmatrix}\overline{d}&-\overline{c}\\-\overline{b}&\overline{a}
\end{pmatrix},\\
\begin{pmatrix}a&b\\c& d\end{pmatrix}^\inv&=\begin{pmatrix}{d}&-{b}\\-{c}&{a}
\end{pmatrix},\\
\begin{pmatrix}a&b\\c& d\end{pmatrix}^*&=\begin{pmatrix}\overline{a}&\overline{c}\\\overline{b}&\overline{d}
\end{pmatrix}.
\end{aligned}$$
Moreover, in this identification we have:
\begin{enumerate}[$(i)$]
    \item $\A_1$ corresponds to the subalgebra of $M_2$ formed by diagonal matrices.
    \item $(\A_1)_R$ corresponds to the matrices of the form $\begin{pmatrix}
    a&0\\0&\overline{a}
    \end{pmatrix}$, $a\in\ce$.
    \item $\A_0$ corresponds to complex multiples of the unit matrix and $(\A_0)_R$ to real multiples.
    \item $(\A_2)_R$ corresponds to the matrices of the form 
    $\begin{pmatrix}
    a&b\\-\overline{b}&\overline{a}
    \end{pmatrix}$, $a,b\in\ce$.
\end{enumerate}
\end{enumerate}

\begin{remark}
\begin{enumerate}[$(a)$]
    \item It follows from the above identifications that the products $\boxdot_n$ are associative for $n\le 2$ and commutative for $n\le 1$. In the sequel we will denote these associative products by $\cdot$ or simply omit them as it is usual.
    
    \item Note that in the above representations of $\A_1$ and $\A_2$ the conjugation does not coincide with the coordinatewise conjugation on $\ce\oplus_\infty\ce$ or the entrywise conjugation on $M_2$. 
    The reason is that the conjugation depends on the choice of a basis. 
    
    In particular, the conjugation on $\ce\oplus_\infty\ce$ coming from $\A_1$ is the coordinatewise conjugation with respect to the basis $(1,1),(i,-i)$ and the conjugation on $M_2$ coming from $\A_2$ is the coordinatewise conjugation with respect to the basis
    $$\begin{pmatrix}
    1&0\\0&1
    \end{pmatrix},\begin{pmatrix}
    i&0\\0&-i
    \end{pmatrix},\begin{pmatrix}
    0&1\\-1&0
    \end{pmatrix},\begin{pmatrix}
    0&i\\i&0
    \end{pmatrix}.$$
\end{enumerate}
\end{remark}

\subsection{Real and complex octonions}\label{subsect: real and complex octonions}

While the algebras $\A_n$ for $n\le2$ are associative, for $n=3$ the situation is different. The algebra $(\A_3)_R$ is known under the name \emph{Cayley numbers} or \emph{(real) octonions} and we will denote it by $\O_R$.
Further, the algebra $\A_3$ is known as \emph{complex Cayley numbers} or \emph{(complex) octonions} and we will denote it by $\O$ or $\O_C$. Further, the product $\boxdot_3$ will be denoted simply by $\boxdot$ in the sequel.

Let us now recall some properties of the algebra $\O$.

\begin{prop}\label{P:octonions} \
\begin{enumerate}[$(i)$]
    \item The product $\boxdot$ on $\O$ is neither commutative nor associative, but it is alternative, i.e.,
    $$x\boxdot(x\boxdot y)=(x\boxdot x)\boxdot y\mbox{ and }y\boxdot(x\boxdot x)=(y\boxdot x)\boxdot x\mbox{ for  }x,y\in\O.$$
    \item $\ip{x\boxdot z^*}{y}=\ip{x}{y\boxdot z}$ and $\ip{z^*\boxdot x}{y}=\ip{x}{z\boxdot y}$ for $x,y,z\in\O$.
    \item $\J xyz=\frac12(x\boxdot(y^*\boxdot z)+z\boxdot(y^*\boxdot x))=\frac12((x\boxdot y^*)\boxdot z+(z\boxdot y^*)\boxdot x)$ for $x,y,z\in \O$.
\end{enumerate}
\end{prop}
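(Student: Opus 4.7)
\textbf{Proof plan for Proposition~\ref{P:octonions}.}

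For part $(i)$, non-commutativity is inherited from the subalgebra $\Ha_C = \A_2 \subset \A_3$ (identify $a\in\A_2$ with $(a,0)\in\A_3$), while non-associativity can be exhibited on basis elements directly from the Cayley--Dickson recursion. For alternativity I would write $x = (x_1, x_2),\ y = (y_1, y_2)\in\A_2\times\A_2 = \A_3$ and compute both $x \boxdot (x \boxdot y)$ and $(x \boxdot x) \boxdot y$ coordinatewise from the inductive formula. The two resulting expressions will agree because $\A_2 \cong M_2$ is associative and because the element $x_1 + x_1^\inv$ lies in $\A_0 = \ce\cdot 1$, which is the center of $\A_2$. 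Right-alternativity follows by a symmetric computation.

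For part $(ii)$ the plan is to substitute the identity $\ip{u}{v} = \tfrac12(u \boxdot v^* + \overline{v}\boxdot u^\inv)$ of Lemma~\ref{L:CD}$(a)$ into both sides of $\ip{x \boxdot z^*}{y} = \ip{x}{y \boxdot z}$. Applying the anti-multiplicativity of the two involutions, $(u \boxdot v)^\inv = v^\inv \boxdot u^\inv$ and $(u \boxdot v)^* = v^* \boxdot u^*$, together with $\ce$-linearity of the complex conjugation, the claim reduces to the vanishing of a sum of two associator-type expressions, which is a direct consequence of the alternating property of the associator in any alternative algebra. The second identity of $(ii)$ is treated analogously.

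For part $(iii)$ I would pass through the JB$^*$-algebra structure of $\O$. By Lemma~\ref{L:CD}$(b),(c)$ the involution and Jordan product on $\O$ coming from the spin-factor triple product coincide with $^*$ and $\circ$, so the JB$^*$-algebra structure so determined is the natural one coming from $\boxdot$; hence the spin-factor triple product equals the canonical JB$^*$-algebra triple product \eqref{eq triple product JBstar algebras}, namely
\[
\J xyz = (x \circ y^*) \circ z + (z \circ y^*) \circ x - (x \circ z) \circ y^*.
\]
Substituting $u \circ v = \tfrac12(u \boxdot v + v \boxdot u)$ and expanding produces a sum of sixteen $\boxdot$-products. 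Using Artin's theorem (for subexpressions involving only two generators), the alternating property of the associator, and the Moufang identities to regroup, these collapse into $\tfrac12\bigl((x \boxdot y^*) \boxdot z + (z \boxdot y^*) \boxdot x\bigr)$. The equality of this expression with $\tfrac12\bigl(x \boxdot (y^* \boxdot z) + z \boxdot (y^* \boxdot x)\bigr)$ is then a consequence of the linearized flexible law $(ab)c + (cb)a = a(bc) + c(ba)$, valid in every alternative algebra (it is the linearization in $x$ of the flexibility identity $(xy)x = x(yx)$).

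The main obstacle is the lack of global associativity: every rearrangement of a product of three or more octonions must be justified, either by Artin's theorem on two-generator subalgebras, by the alternating property of the associator, or by one of the Moufang identities. Correctly tracking which regroupings are permitted at each step is the principal bookkeeping task, especially in $(iii)$ where the expansion of the JB$^*$-algebra triple product produces many terms that must be carefully matched.
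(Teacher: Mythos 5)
The paper does not actually prove this proposition: it disposes of all three assertions by citing \cite[Lemmas 6.5 and 6.6]{Finite}, so your proposal is a genuinely different (self-contained) route, and in outline it is sound. Your reduction of $(iii)$ to the identity $(a\circ b)\circ c+(c\circ b)\circ a-(a\circ c)\circ b=\tfrac12\bigl((a\boxdot b)\boxdot c+(c\boxdot b)\boxdot a\bigr)$ in an alternative algebra is correct, and in fact you do not need the Moufang identities at all: after expanding the twelve $\boxdot$-products, the non-associator terms cancel and the six associators sum to zero by the alternating property alone; likewise the final regrouping is just $[x,y^*,z]+[z,y^*,x]=0$. Two points need tightening. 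First, in $(i)$, alternativity of the Cayley--Dickson double of $\A_2$ requires not only that $x_1+x_1^\inv$ be a central scalar (the trace condition) but also that $x_2\boxdot x_2^\inv=x_2^\inv\boxdot x_2$ be a central scalar (the norm condition, i.e.\ $AA^\inv=\det(A)\,\1$ in the $M_2$ picture); you invoke only the former, and the computation does not close without the latter. Second, in $(ii)$ the substitution of Lemma~\ref{L:CD}$(a)$ reduces the claim to $[x,z^*,y^*]=[\overline{y},\overline{z},x^\inv]$, and since $\overline{y}=(y^*)^\inv$, $\overline{z}=(z^*)^\inv$, this is not a direct consequence of the alternating property alone: you also need that the associator is skew under $^\inv$, i.e.\ $[a,b,c]^\inv=-[a,b,c]$, which follows from anti-multiplicativity of $^\inv$ together with the centrality of $a+a^\inv$ (so that $a^\inv$ may be replaced by $-a$ inside an associator) and then the alternating property. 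Both facts are standard for composition algebras, so the gaps are easily filled, but as written the justifications in $(i)$ and $(ii)$ are incomplete.
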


\begin{proof}
Assertion $(i)$ is well known, see e.g.,  \cite[Lemma 6.5$(xii)$]{Finite}.  Assertions $(ii)$ and $(iii)$ are proved for example in \cite[Lemma 6.6$(d),(e)$]{Finite}.
\end{proof}

We continue by some properties of real octonions.

\begin{prop}\label{P:real octonions}
\begin{enumerate}[$(i)$]
    \item $x\boxdot x^\inv=x^\inv\boxdot x=\norm{x}^2$ for $x\in\O_R$.
    \item $x\boxdot (x^\inv \boxdot y)=(y\boxdot x^\inv)\boxdot x=\norm{x}^2 y$ for $x,y\in\O_R$.
    \item If $x,y,z\in\O_R$ and $x\ne0$, then
    $$\begin{aligned}
    y\boxdot x=z\boxdot x&\Longrightarrow y=z,\\
    x\boxdot y=x\boxdot z&\Longrightarrow y=z.
    \end{aligned}$$
    \item $\O_R$ is a division algebra, i.e., whenever $x,y\in\O_R$ and $y\ne0$, then there are unique elements $u,v\in \O_R$ such that $x=u\boxdot y=y\boxdot v$.
\end{enumerate}
\end{prop}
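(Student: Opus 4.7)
The plan is to derive (i) directly from Lemma~\ref{L:CD}(a), and then to reduce (ii)--(iv) to the alternative laws of Proposition~\ref{P:octonions}(i) via the observation that, for $x\in\O_R$, the element $x+x^\inv$ is a real multiple of the unit and hence commutes and associates with everything.

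For (i), I would take $y=x$ in Lemma~\ref{L:CD}(a). Since $x\in\O_R$ satisfies $\overline{x}=x$ and consequently $x^*=\overline{x}^\inv=x^\inv$, both terms in $\tfrac{1}{2}(x\boxdot x^*+\overline{x}\boxdot x^\inv)$ collapse to $x\boxdot x^\inv$, which equals $\ip{x}{x}=\norm{x}^2$ (the Hilbert norm and the triple-product norm agree on $(\A_n)_R$). Applying the same identity to $x^\inv\in\O_R$, together with $\norm{x^\inv}=\norm{x}$ (also from Lemma~\ref{L:CD}(a)), yields $x^\inv\boxdot x=\norm{x}^2$. The key algebraic input for the remaining parts is that $x+x^\inv\in\er\cdot\1$ whenever $x\in\O_R$; this follows by a short induction on the Cayley-Dickson doubling, since $(x_1,x_2)+(x_1,x_2)^\inv=(x_1+x_1^\inv,0)$ reduces the question to the base case $\A_0=\er$.

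With this in hand, write $x^\inv=\alpha\1-x$ for some $\alpha\in\er$. The associator $(x\boxdot x^\inv)\boxdot y-x\boxdot(x^\inv\boxdot y)$ is linear in its middle entry; substituting $\alpha\1-x$ splits it into the scalar associator $(x,\alpha\1,y)$ (which vanishes because scalars associate freely) and $-(x,x,y)$ (which vanishes by the left alternative law). Combined with (i) this proves the first equality of (ii), and the right alternative law handles the second by the symmetric computation. Parts (iii) and (iv) are then routine consequences: right-multiplying $(y-z)\boxdot x=0$ by $x^\inv$ and invoking the same associator-vanishing argument yields $\norm{x}^2(y-z)=0$, hence cancellation; and for the division assertion, the explicit formulas $u=\norm{y}^{-2}\,x\boxdot y^\inv$ and $v=\norm{y}^{-2}\,y^\inv\boxdot x$ satisfy $u\boxdot y=x=y\boxdot v$ by the same mechanism, with uniqueness following from (iii). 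I expect no real obstacle here; the only genuinely new ingredient is the identification of $x+x^\inv$ as a real scalar, and once that is in hand everything reduces mechanically to the alternative laws together with (i).
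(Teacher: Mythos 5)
Your proof is correct, and for the central part $(ii)$ it takes a genuinely different route from the paper. The paper proves $(ii)$ analytically, through the spin-factor triple product: it notes that $x/\norm{x}$ is a unitary element of $\O$, so $\J xxy=\norm{x}^2y$, and then unwinds Proposition~\ref{P:octonions}$(iii)$ to isolate $x\boxdot(x^\inv\boxdot y)$. You instead argue purely algebraically: the trace identity $x+x^\inv\in\er\cdot\1$ (which you correctly extract from the Cayley--Dickson recursion $(x_1,x_2)+(x_1,x_2)^\inv=(x_1+x_1^\inv,0)$) lets you write $x^\inv=\alpha\1-x$, and then trilinearity of the associator reduces the vanishing of $(x,x^\inv,y)$ and $(y,x^\inv,x)$ to the two alternative laws of Proposition~\ref{P:octonions}$(i)$ plus the trivial vanishing of $(x,\1,y)$. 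Your version has the advantage of not invoking the JB$^*$-triple structure at all---only alternativity and the doubling formulas---and it makes explicit the standard fact that in an alternative algebra any element associates with its conjugate; the paper's version is shorter given that the triple-product identity of Proposition~\ref{P:octonions}$(iii)$ is already on the table. Parts $(i)$, $(iii)$ and $(iv)$ coincide with the paper's treatment: $(i)$ via Lemma~\ref{L:CD}$(a)$ and the coincidence of the two norms on $(\A_n)_R$, cancellation by multiplying with $x^\inv$ and applying $(ii)$ to $x^\inv$ in place of $x$, and the explicit quotients $u=\norm{y}^{-2}x\boxdot y^\inv$, $v=\norm{y}^{-2}y^\inv\boxdot x$ with uniqueness from $(iii)$.
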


\begin{proof}
$(i)$ For $x\in\O_R$ we have $x=\overline{x}$ and $x^*=x^\inv$, hence the assertion follows from Lemma~\ref{L:CD}$(a)$. (Note that this is not specific for $\O_R$, the analogue holds on $(\A_n)_R$ for each $n\ge0$.)

$(ii)$ If $x=0$, the equalities are obvious. Assume $x\ne0$. Then $\frac{x}{\norm{x}}$ is a unitary element of $\O$ (see e.g. \cite[Lemma 6.1$(a)$]{Finite}). Using Proposition~\ref{P:octonions}$(iii)$ we get
$$x\boxdot (x^\inv\boxdot y)=2\J xxy-(x\boxdot x^\inv)\boxdot y=2\norm{x}^2y-\norm{x}^2\boxdot y=\norm{x}^2y,$$
where we also used assertion $(i)$. The remaining equality is completely analogous.

$(iii)$ This follows easily from $(ii)$ -- it is enough to multiply the equality by $x^\inv$ from the right in the first case and from the left in the second case.

$(iv)$ The uniqueness follows immediately from $(iii)$. Further, by $(ii)$ we may take
$u=\frac{1}{\norm{y}^2}x\boxdot y^\inv$ and $v=\frac{1}{\norm{y}^2}y^\inv\boxdot x$.
\end{proof}

\subsection{Matrices of octonions and the Cartan factor of type 6}\label{subsec: H3O}

We may consider matrices with entries in $\O$. The two involutions $^*$ and $^\inv$ for such matrices and the product of two matrices of compatible types (denoted again by $\boxdot$) are defined in the standard way: 
If $A=(a_{ij})$ is a matrix of type $m\times n$, then $A^*$ and $A^\inv$ are matrices of type $n\times m$; on the place $ij$ the first one has the element $a_{ji}^*$ and the second one the element $a_{ji}^\inv$. Moreover, if $A=(a_{ij})$ is a matrix of type $m\times n$ and $B=(b_{jk})$ is a matrix of type $n\times p$, then 
$A\boxdot B$ is the matrix of type $m\times p$ which has the element $\sum_{j=1}^n a_{ij}\boxdot b_{jk}$ on place $ik$.  

The Cartan factor of type $6$ is the JB$^*$-algebra of $^\inv$-hermitian $3\times 3$ matrices of octonions, i.e.,
$$C_6=H_3(\O)=\{\x\in M_3(\O)\setsep \x^\inv=\x\},$$
equipped with the Jordan product
$$\x\circ \y=\frac12(\x\boxdot \y+\y\boxdot\x),$$
the involution $^*$ and a uniquely determined norm (cf. \cite{Wright1977} or \cite[\S6.1.38]{Cabrera-Rodriguez-vol2}).
Moreover, as any JB$^*$-algebra, $C_6$ becomes a JB$^*$ triple under the triple product given in \eqref{eq triple product JBstar algebras}.
Note that for $\x\in C_6$ we have $\x^*=\overline{\x}$, where the conjugation is considered entrywise. A general element of $C_6$ has the form
$$\x=\begin{pmatrix}
\alpha & a & b \\ a^\inv & \beta & c \\ b^\inv & c^\inv & \gamma
\end{pmatrix},$$
where $\alpha,\beta,\gamma\in\ce$ and $a,b,c\in\O$. Since it is determined by three complex numbers and three octonions, clearly $\dim C_6=27$. Further, a general element $\x$ is self-adjoint, i.e., $\x^*=\x$, if and only if $\alpha,\beta,\gamma\in\er$ and $a,b,c\in\O_R$. Self-adjoint elements form the exceptional JB-algebra $H_3(\O_R)$.
 
The JB$^*$-algebra $C_6$ is unital, its unit is the unit matrix, we will denote it by $\1$. Note that we even have 
$$\1\boxdot\x=\x\boxdot\1\mbox{ for each }\x\in M_3(\O).$$
Further, $C_6$ is of rank three (cf. \cite[Table 1]{kaup1997real})-- the unit $\1$ is the sum of three mutually orthogonal minimal projections
$$\1=\begin{pmatrix}
1&0&0\\0&0&0\\0&0&0
\end{pmatrix}+\begin{pmatrix}
0&0&0\\0&1&0\\0&0&0
\end{pmatrix}+\begin{pmatrix}
0&0&0\\0&0&0\\0&0&1
\end{pmatrix}$$
and, more generally, any unitary element is the sum of three mutually orthogonal minimal tripotents.

A \emph{frame} in a Cartan factor $C$ is an orthogonal family $\{e_i\}_{i\in \Lambda}$ of minimal tripotents in $C$ for which the tripotent $e= \hbox{w}^*\hbox{-}\sum_{i\in \Lambda} e_i$ is complete and satisfies that dim$(C_1(e))\leq$ dim$(C_1(\tilde{e}))$ for any other complete tipotent $\tilde{e}\in C$.

The situation in $C_6$ is easier. Since $C_6$ is a finite-dimensional JB$^*$-algebra, any complete tripotent is unitary by \cite[Propositions 3.3 and 3.4]{Finite}. Moreover, any two unitary elements of $C_6$ may be exchanged by a triple automorphism (this follows easily from Lemma~\ref{L:sqrt} and Lemma~\ref{L:shift}). If we combine it with the fact that $C_6$ has rank three, we deduce that any triple of mutually orthogonal minimal tripotents in $C_6$ is a frame.

\begin{lemma}\label{L:frame-automorphism}
Let $\uu_1,\uu_2,\uu_3$ and $\vv_1,\vv_2,\vv_3$ be two triples of mutually orthogonal minimal tripotents in $C_6$. Then the following assertions hold.
\begin{enumerate}[$(a)$]
    \item $\uu_1+\uu_2+\uu_3$ and $\vv_1+\vv_2+\vv_3$ are unitary elements in $C_6$.
    \item There is a triple automorphism $T:C_6\to C_6$ such that $T(\uu_j)=\vv_j$ for $j=1,2,3$.
    \item If $\uu_1,\uu_2,\uu_3,\vv_1,\vv_3,\vv_3$ are projections, then any of the triple automorphisms provided by $(b)$ are Jordan $*$-automorphisms.
\end{enumerate}
\end{lemma}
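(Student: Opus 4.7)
For part (a), observe that $\uu:=\uu_1+\uu_2+\uu_3$ is a tripotent as an orthogonal sum of tripotents. If it were not complete, the finite-dimensional JB$^*$-subtriple $(C_6)_0(\uu)$ would be nonzero and would contain a minimal tripotent, producing a fourth minimal tripotent orthogonal to the three given ones---contradicting the fact that $C_6$ has rank $3$. Hence $\uu$ is complete, and in a finite-dimensional JB$^*$-algebra every complete tripotent is unitary (as recalled in the paragraph preceding the lemma via \cite[Propositions 3.3 and 3.4]{Finite}). The same argument applies to $\vv_1+\vv_2+\vv_3$.

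For part (b), I would use the observation preceding the lemma that Lemmas~\ref{L:sqrt} and~\ref{L:shift} combine to produce a triple automorphism of $C_6$ sending any prescribed unitary to any other. Composing with such automorphisms on both sides, it suffices to treat the case where $\uu_1+\uu_2+\uu_3=\vv_1+\vv_2+\vv_3=\1$. In that case each $\uu_j$ (and each $\vv_j$) is a tripotent dominated by $\1$ in the tripotent order, since $\1-\uu_j$ is a tripotent orthogonal to $\uu_j$; thus each $\uu_j$ is a projection in the JB$^*$-algebra $(C_6)_2(\1)=C_6$. The problem then reduces to exhibiting a Jordan $*$-automorphism of $C_6$ mapping one triple of mutually orthogonal minimal projections summing to $\1$ to the other.

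This reduction is the substantive step and the main obstacle; it is here that the later material must enter. Using the explicit description of minimal projections in $C_6$ provided in Section~\ref{sec:projections} together with the supply of automorphisms constructed in Section~\ref{sec:auto-c6} (which include automorphisms coming from permutations of the diagonal and from automorphisms of the octonion algebra lifted entrywise to $H_3(\O)$), one would show that every frame of minimal projections can be carried to the standard diagonal frame $\{\mathrm{diag}(1,0,0),\mathrm{diag}(0,1,0),\mathrm{diag}(0,0,1)\}$ by a Jordan $*$-automorphism of $C_6$. Composing such normalizations for the two frames yields the required map. Establishing this transitivity on frames---especially when the entries involved are general octonions rather than diagonal---is where nontrivial input from the octonion automorphism theory is genuinely needed.

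For part (c), if all $\uu_j$ and $\vv_j$ are projections then each sum is simultaneously a projection and, by part (a), a unitary element of $C_6$; since the only unitary projection in a unital JB$^*$-algebra is the unit itself, we obtain $\sum_j\uu_j=\sum_j\vv_j=\1$. Any triple automorphism $T$ with $T(\uu_j)=\vv_j$ therefore satisfies $T(\1)=\sum_j T(\uu_j)=\sum_j\vv_j=\1$, and by \eqref{eq unital triple homomorphisms are algebra homomorphisms} it is automatically a Jordan $*$-automorphism.
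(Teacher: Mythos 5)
Your parts (a) and (c) are sound. For (a) you argue via completeness and rank rather than, as the paper does, by transporting the standard diagonal frame onto $\{\uu_j\}$ with a triple automorphism and using that automorphisms preserve unitarity; your route is a perfectly good (arguably more elementary) alternative. Part (c) coincides with the paper's argument.

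Part (b), however, contains a genuine gap, and you have in fact flagged it yourself: after the (correct) reduction to two triples of mutually orthogonal minimal \emph{projections} summing to $\1$, the statement that ``one would show that every frame of minimal projections can be carried to the standard diagonal frame by a Jordan $*$-automorphism'' is precisely the content that needs proving, and you do not prove it. The paper does not derive this transitivity from its explicit machinery either; it invokes the known result that the automorphism group of a Cartan factor acts transitively on frames (Kaup, \emph{Proposition 5.8} of the cited paper on real Cartan factors, or equivalently Loos, Proposition 5.2 and Theorem 5.3), which settles (b) in one line. If you insist on deriving transitivity from Sections~\ref{sec:auto-c6} and~\ref{sec:projections}, you must actually carry out the normalization of an arbitrary minimal projection (with genuinely octonionic entries) to $\mathrm{diag}(1,0,0)$ and then normalize the remaining two projections inside the Peirce-$0$ space of the first --- a multi-step argument in the spirit of the proof of Theorem~\ref{T:simultaneous biq}, and noticeably harder, since that theorem only achieves a simultaneous reduction to $H_3(\Ha_C)$, not to the diagonal frame. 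You would also need to check for circularity: Theorem~\ref{T:spectral decomposition of unitary}$(iii)$ and everything downstream of it already rely on the present lemma, so only the material of Sections~\ref{sec:auto-oct}--\ref{sec:projections} that is independent of it may be used. As written, part (b) is a plan, not a proof.
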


\begin{proof} As we have seen above, the unit element $\1$ in $C_6$ writes as the orthogonal sum of three mutually orthogonal minimal tripotents $\e_1,\e_2,\e_3$. By  \cite[Proposition 5.8]{kaup1997real}$(i)$ and $(ii)$ each finite orthogonal family of minimal tripotents in a Cartan factor $C$ can be extended to a frame, and the cardinality of every frame in $C$ coincides with the rank of $C$.  Furthermore, by \cite[Proposition 5.8]{kaup1997real}$(iii)$ any two frames in $C$ can be interchanged by a triple automorphism on $C$. So, there exists a triple automorphism $T$ on $C_6$ satisfying $T_1 (\e_j) = \uu_j$ for all $j=1,2,3$. Since $\1$ is a unitary in $C_6,$ the same property passes to $T(\1)= T(\e_1) + T(\e_2) +T(\e_3) = \uu_1+\uu_2+\uu_3 .$ This proves $(a)$. 

Similar arguments from those given above can be applied to derive $(b)$ from \cite[Proposition 5.8]{kaup1997real}$(i),$ $(ii)$ and $(iii)$ (furthermore, since we are working with a finite dimensional Cartan factor the results in \cite{kaup1997real} can be replaced by \cite[Proposition 5.2 and Theorem 5.3]{loos1977bounded}).

$(c)$ If $\uu_1,\uu_2,\uu_3,\vv_1,\vv_3,\vv_3$ are projections, the elements $\uu =\uu_1+\uu_2+\uu_3$ and $\vv=\vv_1+\vv_3+\vv_3$ are projections too. Since they clearly have rank 3 and are bounded by $\1,$ they both coincide with $\1$. Then any of the triple automorphisms given by $(b)$ is unital and hence a Jordan $^*$-isomorphism (cf. \eqref{eq unital triple homomorphisms are algebra homomorphisms}).
\end{proof}

\section{Unitaries in $C_6$}\label{sec:unitaries}

In this section we present several results on the structure of unitary elements in the Cartan factor of type $6$. They were the original motivation of our research and will be used in a forthcoming paper. 

The proofs of some of the results will be done later using results on projections, automorphisms and matrices of biquaternions in the following sections. 

We start by the following theorem on spectral decomposition of unitary elements. It is not really new as it easily follows from known results, but it serves as a starting point for further results.

\begin{thm}\label{T:spectral decomposition of unitary}
  Let $\uu\in C_6$ be a unitary element. 
  \begin{enumerate}[$(i)$]
      \item There are complex units $\alpha_1,\alpha_2,\alpha_3$ and mutually orthogonal minimal projections $\p_1,\p_2,\p_3$ 
such that $u=\alpha_1\p_1+\alpha_2\p_2+\alpha_3\p_3$.
\item The representation from $(i)$ is unique in the natural sense: The triple $(\alpha_1,\alpha_2,\alpha_3)$ is uniquely determined up to reordering. Further, for any complex unit $\alpha$ the sum $\sum_{\alpha_j=\alpha}\p_j$ is also uniquely determined.
\item There is a Jordan $*$-automorphism $T$ of $C_6$ such that $T(\uu)$ is a diagonal matrix.
  \end{enumerate}
\end{thm}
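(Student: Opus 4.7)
The plan is to combine Proposition~\ref{P:spectral}(b), the rank-three property of $C_6$, and Lemma~\ref{L:frame-automorphism}(c). Part (i) will be a refinement of the general spectral decomposition into minimal summands, (ii) will read that refinement ``modulo multiplicities,'' and (iii) will be a direct application of the Jordan-automorphism clause of Lemma~\ref{L:frame-automorphism}.

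For part (i), I apply Proposition~\ref{P:spectral}(b) to $\uu$ to obtain a representation $\uu=\beta_1\q_1+\dots+\beta_k\q_k$ in which the $\beta_j$ are distinct complex units and the $\q_j$ are pairwise orthogonal projections with $\sum_j\q_j=\1$. Since $C_6$ has rank three, the ranks of the $\q_j$ sum to three, and it suffices to split each $\q_j$ into an orthogonal sum of $\mathrm{rank}(\q_j)$ minimal projections of $C_6$. The Peirce-$2$ subalgebra $(C_6)_2(\q_j)$ is a finite-dimensional unital JB$^*$-algebra of rank $\mathrm{rank}(\q_j)$ in which $\q_j$ does admit such a decomposition. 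A short Peirce-calculus check shows that for a projection $\r\leq\q_j$ one has $(C_6)_2(\r)=((C_6)_2(\q_j))_2(\r)$, so minimality inside $(C_6)_2(\q_j)$ is equivalent to minimality in $C_6$. Relabelling the resulting three summands gives the asserted decomposition, with the $\alpha_j$ possibly repeated.

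For part (ii), uniqueness is read off from Proposition~\ref{P:spectral}(b): given any representation of $\uu$ as in (i), grouping together the summands $\p_j$ sharing a common coefficient produces a decomposition of the type provided by Proposition~\ref{P:spectral}(b), which is unique up to reordering. Consequently the multiset $\{\alpha_1,\alpha_2,\alpha_3\}$ is determined by $\uu$, and for every complex unit $\alpha$ the projection $\sum_{\alpha_j=\alpha}\p_j$ coincides with the unique spectral projection of $\uu$ associated with the eigenvalue $\alpha$ (or equals $0$ when $\alpha$ is not an eigenvalue).

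For part (iii), I apply Lemma~\ref{L:frame-automorphism}(c) to the triples $(\p_1,\p_2,\p_3)$ from (i) and $(\e_1,\e_2,\e_3)$, the three diagonal matrix-unit minimal projections of $C_6$ displayed in Subsection~\ref{subsec: H3O}. Both triples consist of mutually orthogonal minimal projections, so the lemma furnishes a Jordan $^*$-automorphism $T$ of $C_6$ with $T(\p_j)=\e_j$; by linearity $T(\uu)=\alpha_1\e_1+\alpha_2\e_2+\alpha_3\e_3$, which is a diagonal matrix. The only step in the whole argument that I expect to require any real care is the ``minimal in $(C_6)_2(\q_j)$ implies minimal in $C_6$'' verification in (i); that is the piece I would write out in full.
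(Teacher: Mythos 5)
Your proposal is correct and follows essentially the same route as the paper: Proposition~\ref{P:spectral}$(b)$ plus the rank-three property to refine the spectral decomposition into minimal projections for $(i)$, reading uniqueness back from Proposition~\ref{P:spectral}$(b)$ for $(ii)$, and Lemma~\ref{L:frame-automorphism} for $(iii)$. The extra Peirce-calculus check that minimality in $(C_6)_2(\q_j)$ agrees with minimality in $C_6$ is a welcome detail the paper leaves implicit.
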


\begin{proof}
$(i)$  
By Proposition~\ref{P:spectral}$(b)$ we have $\uu=\sum_{j=1}^n\alpha_j \p_j$, where $\alpha_1,\dots,\alpha_n$ are complex units and  $\p_1,\dots,\p_n$ are mutually orthogonal projections with sum equal to $\1$.
Since the rank of $C_6$ is three, necessarily $n\le 3$. If all $\p_j$ are minimal,
then $n=3$. If some of the projections $\p_j$ has higher rank,
it may be decomposed as the sum of  minimal projections. This completes the proof of $(i)$.

$(ii)$ The uniqueness follows easily from Proposition~\ref{P:spectral}$(b)$.

$(iii)$ This follows from $(i)$ using Lemma~\ref{L:frame-automorphism}.
\end{proof}

The previous theorem says, in particular, that to each unitary element $\uu\in C_6$ we may canonically assign a unique triple of complex units $\alpha_1,\alpha_2,\alpha_3$ (it is unique up to reordering and these three numbers need not be distinct).
We define the \emph{determinant} of such $\uu$ by the formula
$$\dt \uu=\alpha_1\alpha_2\alpha_3.$$
It is clear that the triple $\alpha_1,\alpha_2,\alpha_3$ (and hence also the determinant) is preserved by Jordan $*$-automorphisms of $C_6$.

Further, if $\e\in C_6$ is a unitary element, we may introduce on $C_6$ a new structure of a JB$^*$-algebra in which $\e$ is the unit. The operations are defined by
$$\x\circ_{\e}\y=\J{\x}{\e}{\y}\mbox{ and }\x^{*_{\e}}=\J {\e}{\x}{\e}$$
for $\x,\y\in C_6$. This JB$^*$-algebra is Jordan $*$-isomorphic to $C_6$. One of the ways  to prove this is to observe that $\e$ may be expressed as the sum of three mutually orthogonal minimal tripotents and use Lemma~\ref{L:frame-automorphism}. 

Therefore, we may apply Theorem~\ref{T:spectral decomposition of unitary} to this new JB$^*$-algebra and deduce that, given $\uu\in C_6$ unitary, there is a decomposition
$$\uu=\alpha_1\vv_1+\alpha_2\vv_2+\alpha_3\vv_3,$$
where $\vv_1,\vv_2,\vv_3$ are mutually orthogonal minimal tripotents satisfying $\vv_j\le \e$ for $j=1,2,3$. Moreover, this decomposition is unique in the sense of Theorem~\ref{T:spectral decomposition of unitary}$(ii)$. So, in this situation we may define
$$\dt_{\e}\uu=\alpha_1\alpha_2\alpha_3.$$

The key structure result is the following theorem which will be proved in Section~\ref{sec:proofs} below.

\begin{thm}\label{T:product}
Let $\uu,\e\in C_6$ be two unitary elements. Then
$$\dt\uu=\dt_{\e}\uu\cdot\dt\e.$$
\end{thm}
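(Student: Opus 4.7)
My strategy is to transfer the problem from the exceptional algebra $C_6$ to the associative setting of $3\times 3$ matrices of biquaternions, where the determinant is controlled by ordinary matrix arithmetic, and then to conclude via a multiplicativity property of the biquaternion determinant established in Section~\ref{sec:dt-n}.

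First, both quantities on the right-hand side are invariant under Jordan $^*$-automorphisms of $C_6$: if $T$ is such an automorphism, then $T$ carries the $\1$-spectral decomposition of $\uu$ onto that of $T(\uu)$, so $\dt T(\uu)=\dt \uu$ and $\dt T(\e)=\dt \e$; furthermore $T$ is a unital Jordan $^*$-isomorphism between the JB$^*$-algebras $(C_6,\circ_\e,{}^{*_\e})$ and $(C_6,\circ_{T(\e)},{}^{*_{T(\e)}})$, so it carries $\e$-spectral decompositions to $T(\e)$-spectral decompositions and thus $\dt_{T(\e)} T(\uu)=\dt_\e \uu$. Applying Theorem~\ref{T:simultaneous biq}, I may henceforth assume that $\uu$ and $\e$ both lie in a subalgebra of $C_6$ isomorphic to $H_3(\Ha_C)$ (via the natural inclusion $\Ha_C\hookrightarrow\O$).

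Next, by Lemma~\ref{L:sqrt} fix a unitary $\vv\in C_6$ with $\vv^2=\e$; as $\vv$ is produced by functional calculus from $\e$ inside its abelian unital JB$^*$-subalgebra, $\vv$ already lies in $H_3(\Ha_C)$. By Lemma~\ref{L:shift}, $U_\vv$ is a triple automorphism of $C_6$ sending $\1$ to $\vv^2=\e$, hence mapping minimal projections of $C_6$ bijectively onto minimal tripotents bounded by $\e$. Therefore, if $\uu=\alpha_1\vv_1+\alpha_2\vv_2+\alpha_3\vv_3$ is the $\e$-spectral decomposition of $\uu$, then $U_\vv^{-1}(\uu)=\alpha_1\p_1+\alpha_2\p_2+\alpha_3\p_3$ with $\p_j:=U_\vv^{-1}(\vv_j)$ mutually orthogonal minimal projections summing to $\1$; this is the ordinary spectral decomposition of $U_\vv^{-1}(\uu)$, whence
$$\dt_\e(\uu)\;=\;\alpha_1\alpha_2\alpha_3\;=\;\dt\bigl(U_\vv^{-1}(\uu)\bigr).$$

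Finally, since $\Ha_C$ is associative, a short Jordan computation gives $U_\vv(\x)=\vv\boxdot\x\boxdot\vv$ for every $\x\in M_3(\Ha_C)$; in particular $U_\vv^{-1}(\uu)=\vv^{-1}\boxdot\uu\boxdot\vv^{-1}$ lies again in $H_3(\Ha_C)$ (since $\vv^\inv=\vv$ forces $(\vv^{-1})^\inv=\vv^{-1}$). The claim thus reduces to
$$\dt \uu\;=\;\dt\bigl(\vv^{-1}\boxdot\uu\boxdot\vv^{-1}\bigr)\cdot\dt(\vv^2),$$
an identity inside $H_3(\Ha_C)$ alone. I plan to close the argument by invoking the multiplicativity $\dt(\vv\boxdot\y\boxdot\vv)=(\dt\vv)^2\cdot\dt\y$ for hermitian biquaternion matrices, together with $\dt(\vv^2)=(\dt\vv)^2$, both of which should be available from the machinery built in Section~\ref{sec:dt-n}. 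The main obstacle is precisely this multiplicativity of the biquaternion determinant under conjugation by a hermitian invertible element; once that is in place, the reduction via Theorem~\ref{T:simultaneous biq} and Lemma~\ref{L:sqrt} makes the passage from $C_6$ to $H_3(\Ha_C)$ routine.
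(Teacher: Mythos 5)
Your reduction follows essentially the same route as the paper: apply Theorem~\ref{T:simultaneous biq} to push $\uu$ and $\e$ into $H_3(\Ha_C)$, take a square root $\vv$ of $\e$, and use the associativity of $M_3(\Ha_C)$ to turn the $\e$-spectral decomposition into conjugation by $\vv^{-1}=\vv^*$. All of those steps are sound (including the observation $\dt_{\e}\uu=\dt(U_{\vv}^{-1}\uu)$). The good news on your declared ``main obstacle'' is that the multiplicativity you need is exactly what the paper proves as Proposition~\ref{P:product-dtn}$(ii)$ together with Lemma~\ref{L:isomorphism e-1}: with $T(\x)=\vv^*\x\vv^*$ one gets $\dt_3\x=\dt_3(T\x)\cdot\dt_3\e$, via the identity $\bigl(\dt_3(\lambda\1-T\x)\bigr)^2=\det\bigl(\widehat{\vv^*}(\lambda\widehat{\e}-\widehat{\x})\widehat{\vv^*}\bigr)$ and the polynomial structure from Theorem~\ref{T:determinant}$(vii)$,$(viii)$. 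So that ingredient is genuinely available, and you should simply cite it rather than leave it open.

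The real gap is elsewhere, and you do not flag it: your reduced identity $\dt\uu=\dt(\vv^{-1}\boxdot\uu\boxdot\vv^{-1})\cdot\dt(\vv^2)$ is stated for the \emph{spectral} determinant $\dt$ of Section~\ref{sec:unitaries}, whereas all of the machinery of Section~\ref{sec:dt-n} concerns the inductively defined $\dt_3$. Nothing in Section~\ref{sec:dt-n} tells you these agree; the bridge is Lemma~\ref{L:dt=dt3} (and Lemma~\ref{L:dte=dt3e}), which the paper proves separately and which is not routine. Concretely, if $\uu=\alpha_1\p_1+\alpha_2\p_2+\alpha_3\p_3$ with $\p_j$ minimal projections of $H_3(\Ha_C)$, then $\dt\uu=\alpha_1\alpha_2\alpha_3$ while Theorem~\ref{T:determinant}$(ix)$ gives $\dt_3\uu=\prod_j\alpha_j^{\operatorname{rank}(\widehat{\p_j})/2}$; equality requires that each $\widehat{\p_j}$ have rank exactly two in $M_6$. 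The paper obtains this (Lemma~\ref{L:rank two}) from the explicit description of minimal projections in Theorem~\ref{T:minimal projections}. (It can alternatively be extracted from Theorem~\ref{T:determinant}$(ix)$ itself: each $\widehat{\p_j}$ has even rank and the three ranks sum to $6$, so each equals $2$ --- but some such argument must be supplied.) Without this identification your appeal to ``the machinery built in Section~\ref{sec:dt-n}'' does not close the proof, because the multiplicativity you invoke is a statement about $\dt_3$, not about $\dt$.
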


Next we collect two corollaries of this theorem.

\begin{cor}
Let $\uu,\e\in C_6$ be two unitary elements. Assume that 
$\uu$ is self-adjoint in the JB$^*$-algebra $C_6$ with unit $\e$ (i.e., $\uu=\J{\e}{\uu}{\e}$). Then
$$\dt\uu=\dt\e\mbox{ or }\dt\uu=-\dt\e.$$
\end{cor}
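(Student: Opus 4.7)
The plan is to combine Theorem~\ref{T:product} with the spectral decomposition of $\uu$ taken with respect to the unit $\e$, and observe that the self-adjointness hypothesis forces the spectral coefficients to be real complex units.

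First I would invoke Theorem~\ref{T:product} to reduce the claim to showing $\dt_\e\uu=\pm1$. Indeed, once this is known we get $\dt\uu=\dt_\e\uu\cdot\dt\e=\pm\dt\e$ immediately.

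To see $\dt_\e\uu=\pm1$, I would work in the JB$^*$-algebra structure on $C_6$ with unit $\e$, operations $\circ_\e$ and involution $^{*_\e}$, which is Jordan $*$-isomorphic to $C_6$ (as recalled just before Theorem~\ref{T:product}). The element $\uu$ remains a unitary tripotent in this new structure because being a complete tripotent with $C_6=(C_6)_2(\uu)$ is a triple-theoretic property independent of the choice of unit. The hypothesis $\uu=\J\e\uu\e$ says precisely that $\uu$ is self-adjoint in this new JB$^*$-algebra (i.e.\ $\uu^{*_\e}=\uu$). Applying Theorem~\ref{T:spectral decomposition of unitary}$(i)$ inside $(C_6,\circ_\e,{}^{*_\e})$ yields
\[
\uu=\alpha_1\vv_1+\alpha_2\vv_2+\alpha_3\vv_3,
\]
with $\alpha_1,\alpha_2,\alpha_3$ complex units and $\vv_1,\vv_2,\vv_3$ mutually orthogonal minimal tripotents satisfying $\vv_j\le\e$; by definition $\dt_\e\uu=\alpha_1\alpha_2\alpha_3$. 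Since $\uu$ is also self-adjoint, Proposition~\ref{P:spectral}$(c)$ applied in the same algebra provides a real spectral decomposition for $\uu$. By the uniqueness parts of Proposition~\ref{P:spectral}$(b)$ and $(c)$, the two decompositions must coincide after a permutation, so each $\alpha_j$ is a real complex unit, i.e.\ $\alpha_j\in\{-1,+1\}$. Therefore $\dt_\e\uu=\alpha_1\alpha_2\alpha_3\in\{-1,+1\}$, and the corollary follows.

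The only delicate point — and it is really the main (mild) obstacle — is justifying that the spectral/self-adjoint machinery transfers cleanly from $(C_6,\circ,{}^*)$ to $(C_6,\circ_\e,{}^{*_\e})$. This is handled by the Jordan $*$-isomorphism alluded to in the paragraph preceding Theorem~\ref{T:product} (obtained via Lemma~\ref{L:frame-automorphism} once $\e$ is decomposed as a sum of three mutually orthogonal minimal tripotents). Once this transfer is in hand, the argument is a two-line combination of Theorem~\ref{T:spectral decomposition of unitary}, Proposition~\ref{P:spectral}$(c)$, and Theorem~\ref{T:product}.
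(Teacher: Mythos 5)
Your proof is correct and follows essentially the same route as the paper's: both reduce the claim to showing $\dt_{\e}\uu=\pm1$ and then invoke Theorem~\ref{T:product}. The only cosmetic difference is that the paper obtains the $\pm1$ coefficients by quoting the fact that a self-adjoint tripotent is the difference of two mutually orthogonal projections, whereas you reach the same conclusion by comparing the unitary and self-adjoint spectral decompositions of $\uu$ in the JB$^*$-algebra with unit $\e$.
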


\begin{proof}
A self-adjoint tripotent in a JB$^*$-algebra is the difference of two mutually orthogonal projections. Hence, under our assumptions $\uu=\uu_1-\uu_2$, where $\uu_j\le\e$ for $j=1,2$, and $\uu_1+\uu_2=\e$. It follows that the numbers $\alpha_j$ from the above-described decomposition of $\uu$ with respect to $\e$ are $1$ or $-1$. Thus $\dt_{\e}\uu=1$ or $\dt_{\e}\uu=-1$. Now we may conclude using Theorem~\ref{T:product}.
\end{proof}

\begin{cor}\label{cor:dt after triple-aut}
Let $\uu\in C_6$ be a unitary element and let $T:C_6\to C_6$ be a triple automorphism. Then
$$\dt T(\uu)=\dt \uu\cdot\dt T(\1).
$$
\end{cor}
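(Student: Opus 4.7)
The plan is to exploit the fact that the triple automorphism $T$ becomes a Jordan $*$-isomorphism once we replace the standard JB$^*$-algebra structure on the target with the shifted one whose unit is $\e:=T(\1)$. Once this is established, the spectral decomposition of $\uu$ transfers directly to the $\e$-decomposition of $T(\uu)$ with identical coefficients, and Theorem~\ref{T:product} finishes the job.

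First I would verify that $\e=T(\1)$ is indeed a unitary element of $C_6$. Since $\1$ is a complete tripotent (it is the unit of a unital JB$^*$-algebra, hence a unitary in the triple sense) and triple automorphisms preserve the triple product, $\e$ is also a complete tripotent; in the finite-dimensional setting this means $\e$ is unitary, so $\dt\e=\dt T(\1)$ is well defined. Similarly $T(\uu)$ is unitary.

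Next I would show that $T$ intertwines the standard JB$^*$-algebra structure on $C_6$ with the shifted structure $(\circ_{\e},{}^{*_{\e}})$. Indeed, using that $\x\circ\y=\J{\x}{\1}{\y}$ and $\x^*=\J{\1}{\x}{\1}$ together with the fact that $T$ preserves triple products and $T(\1)=\e$, one computes
\[
T(\x\circ\y)=\{T(\x),\e,T(\y)\}=T(\x)\circ_{\e} T(\y),\qquad
T(\x^*)=\{\e,T(\x),\e\}=T(\x)^{*_{\e}}.
\]
Thus $T$ is a Jordan $*$-isomorphism from $C_6$ (with unit $\1$) onto $C_6$ endowed with the $\e$-structure. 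In particular, $T$ sends projections of the standard algebra to projections of the $\e$-algebra, i.e.\ to tripotents dominated by $\e$, and preserves orthogonality and minimality.

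Now take the spectral decomposition of $\uu$ provided by Theorem~\ref{T:spectral decomposition of unitary}$(i)$:
\[
\uu=\alpha_1\p_1+\alpha_2\p_2+\alpha_3\p_3,
\]
with complex units $\alpha_j$ and mutually orthogonal minimal projections $\p_j$ of $C_6$. Applying $T$ and using the previous paragraph yields
\[
T(\uu)=\alpha_1 T(\p_1)+\alpha_2 T(\p_2)+\alpha_3 T(\p_3),
\]
where the $T(\p_j)$ are mutually orthogonal minimal tripotents with $T(\p_j)\leq \e$. By the uniqueness part of the spectral decomposition relative to $\e$, this is exactly the decomposition defining $\dt_{\e} T(\uu)$, so
\[
\dt_{\e} T(\uu)=\alpha_1\alpha_2\alpha_3=\dt\uu.
\]
Finally, Theorem~\ref{T:product} applied to the unitary pair $(T(\uu),\e)$ gives
\[
\dt T(\uu)=\dt_{\e} T(\uu)\cdot \dt\e=\dt\uu\cdot\dt T(\1),
\]
which is the desired identity. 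The only genuinely substantive ingredient is the intertwining computation, but this is immediate from the definitions of $\circ_{\e}$ and $^{*_{\e}}$; there is no real obstacle once Theorem~\ref{T:product} is in hand.
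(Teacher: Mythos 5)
Your proposal is correct and follows essentially the same route as the paper: decompose $\uu$ spectrally, push the decomposition through $T$ to obtain the decomposition of $T(\uu)$ relative to $\e=T(\1)$ with the same coefficients, conclude $\dt_{\e}T(\uu)=\dt\uu$, and finish with Theorem~\ref{T:product}. The only difference is that you spell out the intertwining of the two JB$^*$-structures, which the paper leaves implicit in the assertion $T(\p_j)\le T(\1)$.
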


\begin{proof}
Let 
$$\uu=\alpha_1\p_1+\alpha_2\p_2+\alpha_3\p_3$$
be the decomposition from Theorem~\ref{T:spectral decomposition of unitary}. Then
$$T(\uu)=\alpha_1T(\p_1)+\alpha_2T(\p_2)+\alpha_3T(\p_3)$$
and $T(\p_j)\le T(\1)$ for $j=1,2,3$. It follows that
$$\dt_{T(\1)}T(\uu)=\dt \uu,$$
hence we may conclude by Theorem~\ref{T:product}.
\end{proof}

An important tool to prove Theorem~\ref{T:product} is the following theorem which is also interesting in itself. It will be also proved in Section~\ref{sec:proofs} below.

\begin{thm}\label{T:simultaneous biq}
Let $\uu,\e\in C_6$ be two unitary elements. Then there is a Jordan $*$-automorphism $T:C_6\to C_6$ such that
\begin{enumerate}[$(i)$]
    \item $T(\e)$ is a diagonal matrix.
    \item The entries of $T(\uu)$ are biquaternions.
\end{enumerate}
\end{thm}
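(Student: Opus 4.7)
The plan splits the construction of $T$ into two stages. First, by Theorem~\ref{T:spectral decomposition of unitary}$(iii)$ there is a Jordan $*$-automorphism $T_1$ of $C_6$ making $T_1(\e)$ diagonal; replacing $\e$ and $\uu$ by $T_1(\e)$ and $T_1(\uu)$, we may assume $\e$ is already a complex diagonal matrix, say $\e=\operatorname{diag}(\alpha_1,\alpha_2,\alpha_3)$ with $|\alpha_j|=1$. It then suffices to produce a further Jordan $*$-automorphism $T_2$ of $C_6$ that leaves the set of complex diagonal matrices invariant and sends $\uu$ into $H_3(\Ha_C)$; then $T:=T_2\circ T_1$ has both required properties.

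With $\e$ diagonal, I would apply the spectral decomposition of $\uu$ in the JB$^*$-algebra $(C_6,\circ_{\e},{}^{*_{\e}})$ with unit $\e$ (as constructed right after Theorem~\ref{T:spectral decomposition of unitary}) to obtain
$$\uu=\beta_1\vv_1+\beta_2\vv_2+\beta_3\vv_3,$$
where $\vv_1,\vv_2,\vv_3$ are mutually orthogonal minimal tripotents below $\e$ and $\beta_1,\beta_2,\beta_3$ are complex units. It suffices to find a single $T_2$ sending every $\vv_j$ into $H_3(\Ha_C)$ while preserving diagonality. The natural candidates arise from Section~\ref{sec:auto-c6}: any algebra automorphism $\phi$ of $\O$ which commutes with the Cayley--Dickson conjugation $\overline{\,\cdot\,}$ (equivalently, which lies in the complex form of $G_2=\operatorname{Aut}(\O_R)$) extends entrywise to a Jordan $*$-automorphism of $M_3(\O)$ restricting to $C_6$; because such a $\phi$ fixes $\ce\subseteq\O$ pointwise, its entrywise extension fixes every complex diagonal matrix, and in particular $\e$.

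Using the explicit description of minimal projections in $C_6$ from Section~\ref{sec:projections} (transferred from the unit $\1$ to the unit $\e$ via Lemma~\ref{L:shift}), each $\vv_j$ is parametrized by an octonion column vector $\x_j\in\O^3$, and the orthogonality relations between the $\vv_j$ together with the alternative law (Proposition~\ref{P:octonions}$(i)$) should force the octonion entries occurring in $\vv_1,\vv_2,\vv_3$ to lie in a common four-dimensional associative subalgebra of $\O$, necessarily isomorphic to $\Ha_C$. The auxiliary results of Section~\ref{sec:auto-oct} would then furnish a $\phi$ in the complex $G_2$ mapping this subalgebra onto the standard $\Ha_C\subseteq\O$, yielding the required $T_2$. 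The main obstacle is precisely this simultaneous step: three generic octonions do not share a common biquaternion subalgebra, so the argument depends crucially on genuinely exploiting the rigidity of a frame of orthogonal minimal tripotents below a fixed diagonal unitary, as captured by the explicit descriptions developed in Sections \ref{sec:auto-oct}--\ref{sec:projections}.
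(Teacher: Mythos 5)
Your first reduction (diagonalizing $\e$ via Theorem~\ref{T:spectral decomposition of unitary}$(iii)$, then composing automorphisms) matches the paper, and decomposing $\uu$ over a frame of three orthogonal minimal tripotents is also the right start (the paper decomposes $\uu$ into minimal \emph{projections} relative to $\1$ rather than tripotents below $\e$, but that is a minor variation). The problem is that the entire content of the theorem sits in the sentence you yourself flag as ``the main obstacle'': the claim that orthogonality plus alternativity ``should force'' the octonion entries of $\vv_1,\vv_2,\vv_3$ into a common quaternion subalgebra of $\O$. That claim is essentially a restatement of the theorem, and you give no argument for it. Artin's theorem does put the entries of a \emph{single} minimal projection (which involve two octonions $a,b$ and their products) into one quaternion subalgebra, but for two or three orthogonal projections one has four octonion parameters $a,b,x,y$, and nothing elementary prevents them from generating all of $\O$. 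The paper's proof of exactly this point is a multi-page case analysis: it uses the explicit form of minimal projections (Theorem~\ref{T:minimal projections}), extracts from the requirement that $\1-\q_1-\q_2$ again be a minimal projection an explicit formula expressing $x^\inv$ as a product built from $a,b,y$, and only then concludes that everything lives in a common associative subalgebra.

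There is also a structural issue with your toolkit. You restrict to entrywise extensions of automorphisms of $\O$ commuting with the conjugation (complexified $G_2$). Such maps fix $1$ and are orthogonal on $\O_R$, so they preserve the splitting $\O_R=\er\oplus\er^{\perp}$ and can never move a general nonzero octonion into $\er$. The paper's normalizations rely precisely on maps that can do this: the \emph{asymmetric triple isomorphisms} of Proposition~\ref{P:OR-transform}$(a)$ and the induced non-entrywise Jordan $*$-automorphisms $\widetilde T$, $\widetilde T_j$ of Proposition~\ref{P:automorphisms-C6} and Corollary~\ref{cor:C6-automorphisms} (which act differently on different matrix entries, e.g.\ $a\mapsto T(a)$ but $b\mapsto T(b)\boxdot T(1)^\inv$). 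It is only \emph{after} normalizing $a\in\er$ with such a map that the relation forcing $x$ into the subalgebra generated by $b$ and $y$ becomes available; before that normalization it is not clear (and you do not show) that a common quaternion subalgebra exists at all. Even granting your claim, you would additionally need transitivity of $G_2$ on quaternion subalgebras of $\O_R$, which is true but is not among the results you cite (Proposition~\ref{P:OR-transform} only normalizes individual elements). As it stands, the proposal is a plausible plan with the decisive step missing.
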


Let us comment a bit the meaning of this theorem. It says, in particular, that given two unitary elements in $C_6$, we may assume, up to applying a Jordan $*$-automorphism, that they belong to $H_3(\Ha_C)$, a Jordan $*$-subalgebra of the C$^*$-algebra $M_3(\Ha_C)$ (which is $*$-isomorphic to $M_6$). 

So, not only that the Jordan $*$-subalgebra of $C_6$ generated by two unitary elements is a JC$^*$-algebra (which is known -- it easily follows using the functional calculus and \cite[Corollary 2.2 and subsequent comments]{Wright1977}), but the surrounding C$^*$-algebra may be $M_3(\Ha_C)$ and the injection may be induced by a Jordan $*$-automorphism of $C_6$, preserving hence all the structure, including the determinants.

\section{Hermitian matrices of biquaternions and their determinants}\label{sec:dt-n}

Recall that biquaternions (denoted by $\Ha_C$) are a non-commutative C$^*$-algebra $*$-isomorphic to the matrix algebra $M_2$. Therefore $M_n(\Ha_C)$ -- $n\times n$ matrices of biquaternions -- is a C$^*$-algebra $*$-isomorphic to $M_{2n}$. We will consider the subspace formed by $^\inv$-hermitian matrices, i.e., 
$$H_n(\Ha_C)=\{ \x\in M_n(\Ha_C)\setsep \x^\inv=\x\}.$$
This subspace is a Jordan $*$-subalgebra of $M_n(\Ha_C)$. We will define and investigate determinants of elements in $H_n(\Ha_C)$. 

Note that in Section~\ref{sec:unitaries} we defined determinants of unitary elements in $C_6=H_3(\O)$ using their spectral decomposition. Now we are going to define determinants of general elements of $H_n(\Ha_C)$ using an inductive formula inspired by the rules of computing determinants of complex matrices. In Section~\ref{sec:proofs} below we will show that for unitary elements in $H_3(\Ha_C)$ the two approaches give the same result.

For our original motivation it would be enough to work only with $3\times 3$ matrices, but we find interesting that the theory works for a general $n$.

We will further need some notation reflecting the correspondence between $\Ha_C$ and $M_2$. If $x\in \Ha_c$, we will denote by $\widehat{x}$ the corresponding $2\times 2$ complex matrix. Further, if $\x\in M_n(\Ha_C)$, by $\widehat{\x}$ we will denote the corresponding $2n\times 2n$ complex matrix (cf. $(5)$ in subsection \ref{subsec: quaternions and biquaternions}).

The promised determinants are introduced by the following theorem, where we also gather their basic properties.

\begin{thm}\label{T:determinant}
There is a unique sequence of mappings $(\dt_n)_{n\in\en}$ with the following properties.
\begin{enumerate}[$(i)$]
    \item $\dt_n:H_n(\Ha_C)\to\Ha_C$ is a continuous mapping for each $n\in\en$.
    \item If $\x=(x_{11})\in H_1(\Ha_C)$, then $\dt_1 \x=x_{11}$.
    \item If $\x=(x_{ij})\in H_{n+1}(\Ha_C)$ and $x_{11}\ne0$, then
    $$\dt_{n+1}\x=x_{11}\cdot\dt_n \left(x_{ij}-x_{11}^{-1}x_{i1}x_{1j}\right)_{2\le i,j\le n+1}.$$
    \end{enumerate}    

Moreover, the following assertions hold as well.
    
\begin{enumerate}[$(i)$]\setcounter{enumi}{3}     
\item The values of $\dt_n$ are complex numbers.
\item $\dt_n (\alpha\x)=\alpha^n\dt_n\x$ for $\x\in H_n(\Ha_C)$ and $\alpha\in\ce$.
    \item $\dt_n\x=\sum_{\sigma,\pi\in\mathbb{S}_n}\alpha_{\sigma,\pi}\cdot x_{\sigma(1),\pi(1)}\cdots x_{\sigma(n),\pi(n)}$ for some complex coefficients $\alpha_{\sigma,\pi}$. (Here $\mathbb{S}_n$ denotes the set of all permutations of $\{1,\dots,n\}$.)
   
    \item $\det \widehat{\x}=\left(\dt_n\x\right)^2$ for $\x\in H_n(\Ha_C)$, here $\det {\widehat{x}}$ stands for the usual determinant of matrices $\widehat{x}\in M_{2n} (\mathbb{C})$.
    \item If $\x,\y\in H_n(\Ha_C)$, then the function $\lambda\mapsto\dt_n(\lambda\y+\x)$ is a complex polynomial of degree at most $n$ with coeffients $\dt_n\y$ at $\lambda^n$ and $\dt_n\x$ at $\lambda^0$.
    \item If $\x\in H_n(\Ha_C)$, then each eigenvalue of $\widehat{\x}$ has even multiplicity. Moreover, $\dt_n\x$ is the product of all eigenvalues of $\widehat{\x}$, each one counted with half a multiplicity.
\end{enumerate}
\end{thm}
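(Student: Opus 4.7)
The plan is to define $\dt_n$ inductively on the open dense set $\{x_{11}\ne 0\}$, extend it globally via the Pfaffian of a naturally associated skew-symmetric complex matrix, and then read off the remaining properties from the Pfaffian description. The key observation for the inductive step is that $x_{11}^\inv=x_{11}$ forces $x_{11}\in\ce$ (in the $M_2$-model of $\Ha_C$, the fixed points of $^\inv$ are exactly the scalar matrices), so $x_{11}^{-1}$ is central in $\Ha_C$ and a direct calculation, using that $^\inv$ is an anti-automorphism, shows the Schur complement $\mathbf{S}=(x_{ij}-x_{11}^{-1}x_{i1}x_{1j})_{2\le i,j\le n+1}$ stays in $H_n(\Ha_C)$. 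An induction on $n$ then yields a well-defined complex-valued $\dt_n$ on $\{x_{11}\ne 0\}$, giving $(iv)$ there. Applying the classical block Schur-complement formula to the complex $2(n+1)\times 2(n+1)$ matrix $\widehat{\x}$ --- using that its $(1,1)$-block is the scalar matrix $x_{11}I_2$ and that $\widehat{\cdot}$ is a ring homomorphism, so the complex Schur complement equals the hat of $\mathbf{S}$ --- yields $\det\widehat{\x}=x_{11}^2\det\widehat{\mathbf{S}}=(\dt_{n+1}\x)^2$ inductively, proving $(vii)$ on the open set.

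To extend $\dt_n$ to all of $H_n(\Ha_C)$, I would use that $\widehat{x^\inv}=-J\widehat{x}^TJ$ (with $J=\begin{pmatrix}0&1\\-1&0\end{pmatrix}$) extends entrywise to $\widehat{\x^\inv}=-\tilde{J}\widehat{\x}^T\tilde{J}$ for $\tilde{J}=I_n\otimes J$, which is easily seen to be equivalent to $\tilde{J}\widehat{\x}$ being skew-symmetric. Hence the Pfaffian $P(\x):=\operatorname{Pf}(\tilde{J}\widehat{\x})$ is a complex polynomial in the entries of $\x$ of degree $n$ with $P(\x)^2=\det\widehat{\x}$ (since $\det\tilde{J}=1$). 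On the open connected set $U=\{x_{11}\ne 0,\ \det\widehat{\x}\ne 0\}$ --- connected as the complement of a complex algebraic hypersurface in a connected complex vector space --- both $\dt_n$ and $P$ are continuous nonvanishing square roots of the same polynomial, so their ratio is locally constant in $\{\pm 1\}$; evaluating at $\x=I$ gives $\operatorname{Pf}(\tilde{J})=\operatorname{Pf}(J)^n=1=\dt_nI$, so $\dt_n=P$ on $U$, hence on $\{x_{11}\ne 0\}$ by continuity. Setting $\dt_n:=P$ everywhere gives the unique continuous extension; uniqueness of the sequence $(\dt_n)$ then follows because $\dt_1$ is fixed by $(ii)$ and each $\dt_{n+1}$ is determined by $(iii)$ on a dense set and extended by continuity.

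The remaining properties now follow directly from the Pfaffian description. Property $(v)$ is the standard homogeneity $\operatorname{Pf}(\alpha A)=\alpha^n\operatorname{Pf}(A)$ for $2n\times 2n$ skew-symmetric $A$; $(vi)$ is the combinatorial Pfaffian formula. For $(viii)$, $P(\lambda\y+\x)=\operatorname{Pf}(\lambda\tilde{J}\widehat{\y}+\tilde{J}\widehat{\x})$ is a polynomial of degree at most $n$ in $\lambda$ whose leading and constant coefficients are $P(\y)=\dt_n\y$ and $P(\x)=\dt_n\x$. For $(ix)$, the pencil $\lambda\tilde{J}-\tilde{J}\widehat{\x}$ is skew-symmetric for every $\lambda\in\ce$, so $\det(\lambda I-\widehat{\x})=\det(\tilde{J}(\lambda I-\widehat{\x}))=\operatorname{Pf}(\lambda\tilde{J}-\tilde{J}\widehat{\x})^2$ is a perfect square of a monic polynomial of degree $n$ in $\lambda$; hence each eigenvalue of $\widehat{\x}$ has even multiplicity, and evaluating at $\lambda=0$ identifies $\dt_n\x$ with the product of these eigenvalues taken with half multiplicity. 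The main obstacle I anticipate is the sign-matching in the extension step --- ensuring that the inductively defined $\dt_n$ agrees with the Pfaffian $P$ on the nose rather than only up to a sign --- which is resolved precisely by the connectedness of $U$ together with the base-point evaluation at $\x=I$.
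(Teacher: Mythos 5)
Your Pfaffian route is genuinely different from the paper's, and for most of the theorem it is both correct and cleaner. The paper uses the same inductive Schur--complement definition on $\{x_{11}\ne 0\}$ and proves $(vii)$ there exactly as you do, but it never mentions Pfaffians: to cross the set $\{x_{11}=0\}$ it expands the inductive formula as $\sum_m(-1)^m x_{11}^{1-m}P_m$, where each $P_m$ depends only on the entries other than $x_{11}$, and proves by a root-counting argument that $P_m\equiv 0$ for $m\ge 2$, so that the apparent negative powers of $x_{11}$ disappear; polynomiality, continuity, $(v)$, $(vi)$ and $(viii)$ all come out of that expansion, and $(ix)$ is deduced at the end. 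Your observation that $\x=\x^{\inv}$ is equivalent to skew-symmetry of $\tilde J\widehat{\x}$ (with $\tilde J=I_n\otimes J$, $\det\tilde J=1$) hands you the global polynomial extension, complex-valuedness, $(v)$, $(viii)$ and $(ix)$ essentially for free, and the sign-matching via connectedness of the complement of the hypersurface $\{x_{11}\det\widehat{\x}=0\}$ together with the normalization $\operatorname{Pf}(\tilde J)=1=\dt_n\1$ is sound. Do phrase the induction so that at each stage $\dt_n$ has already been identified with the Pfaffian on \emph{all} of $H_n(\Ha_C)$ before $\dt_{n+1}$ is formed, since the Schur complement $\mathbf S$ of a matrix with $x_{11}\ne0$ need not itself have nonzero $(1,1)$ entry.

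The one genuine gap is $(vi)$. The combinatorial Pfaffian formula writes $\operatorname{Pf}(\tilde J\widehat{\x})$ as a sum over perfect matchings of $\{1,\dots,2n\}$ of signed products of individual \emph{complex components} of the blocks $\widehat{x_{ij}}$; that is not the assertion of $(vi)$, which is that $\dt_n\x$ is a linear combination of the \emph{ordered noncommutative products} $x_{\sigma(1),\pi(1)}\cdots x_{\sigma(n),\pi(n)}$ of whole biquaternion entries. Already for $n=2$ a single matching can pair $1$ with $3$ and $2$ with $4$, producing a product of two components of the same entry $x_{12}$; only after summing several matchings do such monomials reassemble into $-x_{21}x_{12}=-x_{12}^{\inv}x_{12}$, and nothing in the matching expansion guarantees a priori that such a reassembly is possible for general $n$. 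So $(vi)$ still requires an argument. The cheapest repair inside your framework: assume $(vi)$ for $\dt_n$, substitute it into the inductive formula to get $\dt_{n+1}\x=\sum_{m=0}^{n}(-1)^m x_{11}^{1-m}P_m$ on $\{x_{11}\ne0\}$ with $P_m$ independent of $x_{11}$ and of the required noncommutative shape; since you already know $\dt_{n+1}$ is a polynomial, fixing the other entries and comparing negative powers of $x_{11}$ (or letting $x_{11}\to0$ after multiplying by $x_{11}^{m-1}$) forces $P_m\equiv0$ for $m\ge2$, which is exactly the conclusion the paper works harder to obtain and which yields $(vi)$ for $\dt_{n+1}$.
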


\begin{proof} 
Let us start by explaining that the formula from $(iii)$ is reasonable. Assume that $\x=(x_{ij})\in H_{n+1}(\Ha_C)$ and $x_{11}\ne0$. Since $\x=\x^\inv$, necessarily $x_{11}\in\ce$, thus the inverse $x_{11}^{-1}$ exists. Moreover, the matrix on the right-hand side is $^\inv$-hermitian, as
$$(x_{ij}-x_{11}^{-1}x_{i1}x_{1j})^\inv=x_{ij}^\inv-x_{11}^{-1}x_{1j}^\inv x_{i1}^\inv=x_{ji}-x_{11}^{-1}x_{j1}x_{1i}.$$
Thus the formula from $(iii)$ is an inductive formula defining $\dt_{n+1}\x$ in case $x_{11}\ne0$ using $\dt_n$.

We will prove by induction the existence of a sequence $(\dt_n)$ satisfying conditions $(i)-(viii)$. The uniqueness is then easy -- $\dt_1$ must be defined as in $(ii)$. Moreover, condition $(iii)$ determines uniquely $\dt_{n+1}\x$ in case $x_{11}\ne0$. Such matrices are dense in $H_{n+1}(\Ha_C)$, so the uniqueness follows by continuity. Assertion $(ix)$ will be deduced from $(i)-(viii)$ at the end of the proof.

Let us start by $n=1$. The mapping $\dt_1$ is defined in $(ii)$ and it is clearly continuous, so $(i)$ holds. There is nothing to check in assertion $(iii)$ as $\dt_2$ is not defined, and assertions $(iv)-(vi)$ and $(viii)$  are in this case obvious. Further, if $\x=(x_{11})$, then $\widehat{\x}=\begin{pmatrix}
x_{11}&0\\0&x_{11}
\end{pmatrix},$
hence assertions $(vii)$  follows easily.

Next assume that $n\in\en$ and we have mappings $\dt_1,\dots,\dt_n$ satisfying conditions $(i)-(viii)$. We will show how $\dt_{n+1}$
may be defined and prove the respective properties.  

We  start by defining $\dt_{n+1}\x$ for $\x\in H_{n+1}(\Ha_C)$ such that $x_{11}\ne0$  using the formula in $(iii)$. This surely may be done. 
It follows from the induction hypothesis that $\dt_{n+1}\x\in\ce$ whenever $x_{11}\ne0$. 
Further,
$$\begin{aligned}
\det\widehat{\x}&=\det\left(\begin{smallmatrix}
\widehat{x_{11}}&\widehat{x_{12}}&\dots&\widehat{x_{1,n+1}}\\\widehat{x_{21}}&\widehat{x_{22}}&\dots&\widehat{x_{2,n+1}}\\\vdots&\vdots&\ddots&\vdots\\
\widehat{x_{n+1,1}}&\widehat{x_{n+1,2}}&\dots&\widehat{x_{n+1,n+1}}
\end{smallmatrix}\right) \\&=
\det\left(\begin{smallmatrix}
\widehat{x_{11}}&\widehat{x_{12}}&\dots&\widehat{x_{1,n+1}}\\ 0&\widehat{x_{22}}-\widehat{x_{21}}\cdot\widehat{x_{11}}^{-1}\cdot\widehat{x_{12}}&\dots&\widehat{x_{2,n+1}}-\widehat{x_{21}}\cdot\widehat{x_{11}}^{-1}\cdot\widehat{x_{1,n+1}}\\\vdots&\vdots&\ddots&\vdots\\
0&\widehat{x_{n+1,2}}-\widehat{x_{n+1,1}}\cdot\widehat{x_{11}}^{-1}\cdot\widehat{x_{12}}&\dots&\widehat{x_{n+1,n+1}}-\widehat{x_{n+1,1}}\cdot\widehat{x_{11}}^{-1}\cdot\widehat{x_{1,n+1}}\end{smallmatrix}\right)
\\&=\det\widehat{x_{11}}\cdot
\det\left(\begin{smallmatrix}
\widehat{x_{22}}-\widehat{x_{21}}\cdot\widehat{x_{11}}^{-1}\cdot\widehat{x_{12}}&\dots&\widehat{x_{2,n+1}}-\widehat{x_{21}}\cdot\widehat{x_{11}}^{-1}\cdot\widehat{x_{1,n+1}}\\\vdots&\ddots&\vdots\\
\widehat{x_{n+1,2}}-\widehat{x_{n+1,1}}\cdot\widehat{x_{11}}^{-1}\cdot\widehat{x_{12}}&\dots&\widehat{x_{n+1,n+1}}-\widehat{x_{n+1,1}}\cdot\widehat{x_{11}}^{-1}\cdot\widehat{x_{1,n+1}}\end{smallmatrix}\right)
\\&=x_{11}^2\cdot \left(\dt_n \left(x_{ij}-x_{11}^{-1}x_{i1}x_{1j}\right)_{2\le i,j\le n+1}\right)^2=\left(\dt_{n+1}\x\right)^2.
\end{aligned}$$
The first equality follows just from definitions -- recall that each $\widehat{x_{ij}}$ stands for  a complex $2\times 2$ matrix. The second equality follows from the rules of computing determinants in $M_{2n} (\mathbb{C})$ using row transformations. For example, the `second row' (which is formed by the third and fourth rows in the respective complex matrix) is obtained by subtracting the `first row' multiplied by 
$\widehat{x_{21}}\cdot\widehat{x_{11}}^{-1}$ from the left. This means that from the third and fourth rows we subtract suitable linear combinations of the first two rows.
Such a transformation preserves the value of determinant. The third equality follows from the rules of computing determinants of block matrices. The fourth one follows from the induction hypothesis and the last one from the defining formula from $(iii)$.

This completes the proof of $(vii)$ for $\widehat{\x}\in H_{n+1}(\Ha_C)$ with $x_{11}\ne0$.

We further deduce that  for $\x\in H_{n+1}(\Ha_C)$ with $x_{11}\ne0$ we have
$$\begin{aligned}
\dt_{n+1}\x&=x_{11}\cdot \dt_n\left(x_{ij}-x_{11}^{-1}x_{i1}x_{1j}\right)_{2\le i,j\le n+1}
\\&=x_{11}^{-n+1}\dt_n\left(x_{11}x_{ij}-x_{i1}x_{1j}\right)_{2\le i,j\le n+1}
\\&= x_{11}^{-n+1}\sum_{\sigma,\pi\in\mathbb{S}_n}
\alpha_{\sigma,\pi} \prod_{k=1}^n (x_{11} x_{\sigma(k)+1,\pi(k)+1}-x_{\sigma(k)+1,1}x_{1,\pi(k)+1}).
\end{aligned}$$
The first equality is just the formula from $(iii)$, next we use the induction hypothesis (conditions $(v)$ and $(vi)$). 

Let us introduce the following notation. If $A\subset \{1,\dots,n\}$, $k\in\{1,\dots,n\}$ and $\sigma,\pi\in\mathbb{S}_n$, we set
$$z^A_{\sigma,\pi,k}=\begin{cases} x_{\sigma(k)+1,\pi(k)+1},& k\in\{1,\dots,n\}\setminus A,\\
x_{\sigma(k)+1,1}\cdot x_{1,\pi(k)+1}, & k\in A.
\end{cases}$$
Then for $\x\in H_{n+1}(\Ha_C)$ with $x_{11}\ne0$ we have
$$\begin{aligned}
\dt_{n+1}\x&= \sum_{m=0}^n (-1)^{m} x_{11}^{-m+1}\sum_{\sigma,\pi\in\mathbb{S}_n}\alpha_{\sigma,\pi}\sum_{A\subset \{1,\dots,n\}, \abs{A}=m } \prod_{k=1}^n z^A_{\sigma,\pi,k}.
\end{aligned}$$
For $m\in\{0,\dots,n\}$ set
$$P_m((x_{ii})_{2\le i\le n+1},(x_{ij})_{1\le i,j\le n+1, i\ne j})=\sum_{\sigma,\pi\in\mathbb{S}_n}\alpha_{\sigma,\pi}\sum_{A\subset \{1,\dots,n\}, \abs{A}=m } \prod_{k=1}^n z^A_{\sigma,\pi,k},$$
so
$$\dt_{n+1}\x= \sum_{m=0}^n (-1)^{m} x_{11}^{-m+1}P_m((x_{ii})_{2\le i\le n+1},(x_{ij})_{1\le i,j\le n+1, i\ne j})\mbox{ if }x_{11}\ne0.$$

Now fix for a while $\x\in H_{n+1}(\Ha_C)$. If we apply the last equality to $\lambda\cdot \1+\x$ with $\lambda + x_{11}\ne0$ in place of $\x$ we obtain
$$\dt_{n+1} (\lambda\cdot \1+\x)=\sum_{m=0}^{n} (-1)^{m} (\lambda+x_{11})^{-m+1}P_m((\lambda+x_{ii})_{2\le i\le n+1},(x_{ij})_{1\le i,j\le n+1, i\ne j}),$$ for all $\lambda\in\ce\setminus\{-x_{11}\}$.
It follows from the definition of $P_m$ that the function 
$$\lambda\mapsto P_m((\lambda+x_{ii})_{2\le i\le n+1},(x_{ij})_{1\le i,j\le n+1, i\ne j})$$
is a polynomial with coefficients in $\Ha_C$.  We observe that its values are in fact complex numbers. Indeed, by the already proved part of $(iv)$ we get that
for each $\x\in H_{n+1}(\Ha_C)$ and each $\lambda\in\ce$ with $\lambda\ne -x_{11}$, we have 
$$\sum_{m=0}^{n} (-1)^{m} (\lambda+x_{11})^{-m+1}P_m((\lambda+x_{ii})_{2\le i\le n+1},(x_{ij})_{1\le i,j\le n+1, i\ne j}) 
\in\ce.$$
We deduce that 
$$\forall\lambda,\mu\in\ce\colon \mu\ne 0
\Rightarrow\sum_{m=0}^{n} (-1)^{m} \mu^{-m+1}P_m((\lambda+x_{ii})_{2\le i\le n+1},(x_{ij})_{1\le i,j\le n+1, i\ne j})\in\ce.$$
It follows that for each $m$ we have
$$\forall\lambda\in\ce\colon P_m((\lambda+x_{ii})_{2\le i\le n+1},(x_{ij})_{1\le i,j\le n+1, i\ne j})\in\ce,$$
hence 
$$\lambda\mapsto P_m((\lambda+x_{ii})_{2\le i\le n+1},(x_{ij})_{1\le i,j\le n+1, i\ne j})$$
is a polynomial with complex coefficients. 
Thus 
$$\lambda\mapsto\sum_{m=0}^{n} (-1)^{m} (\lambda+x_{11})^{-m+1}P_m((\lambda+x_{ii})_{2\le i\le n+1},(x_{ij})_{1\le i,j\le n+1, i\ne j})$$
is a rational function. On the other hand,
by the already proved part of $(vii)$ we know that
$$\det(\lambda\widehat{\1}+\widehat{\x})=\left(\dt_{n+1}(\lambda\1+\x)\right)^2,\quad \lambda\in\ce\setminus\{-x_{11}\}.$$
But it is known that $\lambda\mapsto \det(\lambda\widehat{\1}+\widehat{\x})$ is a polynomial of degree $2n$. It follows that
$\lambda\mapsto \dt_n(\lambda\cdot\1+\x)$ must be a polynomial of degree $n$ (as it is a rational function whose square is a polynomial of degree $2n$). Since polynomials are continuous on $\ce$, this allows us to define $\dt_{n+1}\x$ in case $x_{11}=0$ by 
$$\dt_{n+1}\x=\lim_{\lambda\to0} \dt_{n+1}(\lambda\1+\x).$$
Thus $\dt_{n+1}$ is now defined on all $H_{n+1}(\Ha_C)$ and assertion $(vii)$ holds for $n+1$.

Next we are going to prove that $P_m$ is  constant zero for $m\ge2$. Assume not. Let $m_0$ be the greatest number for which $P_{m_0}$ is not constant zero and assume $m_0\ge2$.

We already know that
$$\lambda\mapsto \dt_{n+1} (\lambda\cdot \1+\x)=\sum_{m=0}^{m_0} (-1)^{m} (\lambda+x_{11})^{-m+1}P_m((\lambda+x_{ii})_{2\le i\le n+1},(x_{ij})_{1\le i,j\le n+1, i\ne j})
$$
is a complex polynomial of degree  $n$. It is equal to
$$(\lambda+x_{11})^{-m_0+1}\sum_{m=0}^{m_0} (-1)^{m} (\lambda+x_{11})^{m_0-m}P_m((\lambda+x_{ii})_{2\le i\le n+1},(x_{ij})_{1\le i,j\le n+1, i\ne j}),$$
so $-x_{11}$ must be a root of the sum. By plugging $-x_{11}$ we see that it must be also a root of the polynomial
$$\lambda\mapsto P_{m_0}((\lambda+x_{ii})_{2\le i\le n+1},(x_{ij})_{1\le i,j\le n+1, i\ne j}).$$ But this should hold for any value of $x_{11}$, so we get
$$P_{m_0}((\lambda+x_{ii})_{2\le i\le n+1},(x_{ij})_{1\le i,j\le n+1, i\ne j})=0$$
for all $\lambda\in\ce$. In particular by plugging $\lambda=0$ we conclude that $P_{m_0}$ is constant zero. This contradiction completes the proof of $(vi)$ for $n+1$.

Assertion $(v)$ follows immediately from $(vi)$. We also have the continuity of $\dt_{n+1}$.
Assertion $(viii)$ follows also from $(vi)$ using standard algebraic expansion.

This completes the induction argument.

Let us continue by proving assertion $(ix)$. Let $\x\in H_{n}(\Ha_C)$. By $(viii)$ we know that 
$$\lambda\mapsto \dt_n(\lambda\cdot\1-\x)$$
is a polynomial of degree $n$ with coefficient $\dt_n\1=1$ at $\lambda^n$ and coefficient $\dt_n(-\x)=(-1)^n\dt_n \x$ at $\lambda^0$ (the last equality follows from $(v)$). Hence, this polynomial has $n$ complex roots (counted with their multiplicities) and their product is $\dt_n\x$.

Moreover, by $(vii)$ we get
$$\det(\lambda\cdot \widehat{\1}-\widehat{\x})=\left(\dt_n(\lambda\cdot\1-\x)\right)^2,$$
so the roots of the polynomial $\lambda\mapsto \dt_n(\lambda\cdot\1-\x)$ are exactly the eigenvalues of $\widehat{\x}$ and their multiplicities as eigenvalues are  exactly twice their multiplicites as roots. This completes the proof. 
\end{proof}

 For $n=1,2,3$ there are easy formulae for $\dt_n$ similar to the classical formulae for determinant of complex matrices. The formula for $\dt_1$ is contained already in the above theorem. Let us explicitly show the formulae for $n=2$ and $n=3$

 Assume that $\x\in H_2(\Ha_C)$ such that $x_{11}\ne0$. Then
$$\dt_2\x=x_{11}\cdot\dt_1(x_{22}-x_{11}^{-1}x_{21}x_{12})=x_{11}\cdot(x_{22}-x_{11}^{-1}x_{21}x_{12})
=x_{11}x_{22}-x_{21}x_{12}.$$
The last expression is a continuous mapping defined on all $H_2(\Ha_C)$ and is completely analogous to the classical one. Note that $\Ha_C$ is not commutative, but the two products in the last expression do commute as $x_{11},x_{22}\in\ce$ and $x_{21}=x_{12}^\inv$.

Next assume that $\x\in H_3(\Ha_C)$ with $x_{11}\ne0$. Then, by the just obtained formula for $n=2$ we get 
$$\begin{aligned}\dt_3\x&=x_{11}\cdot\dt_2\begin{pmatrix}
x_{22}-x_{11}^{-1}x_{21}x_{12}&x_{23}-x_{11}^{-1}x_{21}x_{13}\\ x_{32}- x_{11}^{-1}x_{31}x_{12}&x_{33} - x_{11}^{-1}x_{31}x_{13}
\end{pmatrix}
\\&=x_{11}\cdot((x_{22}-x_{11}^{-1}x_{21}x_{12})(x_{33}-x_{11}^{-1}x_{31}x_{13})\\&\qquad\qquad-(x_{32}-x_{11}^{-1}x_{31}x_{12})(x_{23}-x_{11}^{-1}x_{21}x_{13}))
\\&=x_{11}x_{22}x_{33}- x_{11} x_{22} x_{31} x_{11}^{-1} x_{13}-x_{21}x_{12}x_{33}+ x_{21}x_{12}x_{11}^{-1} x_{31}x_{13}
\\&\qquad -x_{11}x_{32}x_{23}+ x_{11} x_{32} x_{11}^{-1} x_{21}x_{13} +x_{31}x_{12}x_{23}-x_{31}x_{12}x_{11}^{-1} x_{21}x_{13}
\\&=x_{11}x_{22}x_{33} +x_{32}x_{21}x_{13}+x_{31}x_{12}x_{23}\\&\qquad
-x_{11}x_{32}x_{23}-x_{22}x_{31}x_{13}-x_{21}x_{12}x_{33}
\end{aligned}$$
because $x_{11}\in \mathbb{C}$ and by Proposition \ref{P:real octonions}  
$$x_{31}x_{12}x_{21}x_{13}=x_{31}x_{12}x_{12}^\inv x_{13}
=x_{12}x_{12}^\inv x_{31} x_{13}
=x_{12}^\inv x_{12} x_{31} x_{13}
=x_{21} x_{12} x_{31} x_{13}.$$
Hence we have an analogy of the classical Sarrus' rule. The reader should be warned that $\Ha_C\cong M_2$ is associative but not commutative, and so the order of the products in the previous expression is decisive. 

Now fix $n\in\en$ and a unitary element $\e\in H_n(\Ha_C)$. We may consider on $H_n(\Ha_C)$ a structure of JB$^*$-algebra with unit $\e$. Recall that the operations are defined by
$$\x\circ_{\e}\y=\J {\x}{\e}{\y}=\frac12 (\x\e^*\y+\y\e^*\x) \mbox{ and }\x^{*_{\e}}=\J{\e}{\x}{\e}=\e\x^*\e$$
for $\x,\y\in H_n(\Ha_C)$.

Moreover, ${\e}$ is a unitary element also in $M_{n}(\Ha_c)$, hence we may define  such a structure of JB$^*$-algebra on $M_{n}(\Ha_C)$ as well. We may further equip $M_{n}(\Ha_C)$ with a structure of a C$^*$-algebra with unit $\e$ if we consider the same involution and define an associative multiplication by
$$\x\cdot_{\e}\y=\x \e^* \y,\quad \x,\y\in M_{n}(\Ha_C).$$

\begin{lemma}\label{L:isomorphism e-1}
Let $\e$ be a unitary element of $H_n(\Ha_C)$.
Then there is a mapping $T:M_{n}(\Ha_C)\to M_{n}(\Ha_C)$ with the properties
\begin{enumerate}[$(a)$]
    \item $T$ is a $*$-isomorphism of $M_{n}(\Ha_C)$ equipped with the product $\cdot_{\e}$ and the involution $^{*_{\e}}$ onto $M_{n}(\Ha_C)$ equipped with the standard structure of  C$^*$-algebra.
    \item $T(\x^\inv)=T(\x)^\inv$ for $\x\in\Ha_C$. In particular, $T$ maps $H_n(\Ha_C)$ onto $H_n(\Ha_C)$.
\end{enumerate}
\end{lemma}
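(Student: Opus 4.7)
The plan is to apply Lemma~\ref{L:sqrt} to produce a unitary square root $\vv\in H_n(\Ha_C)$ of $\e$, and then set $T(\x):=\vv^*\x\vv^*$. The motivating intuition is that in the modified product $\x\cdot_{\e}\y=\x\e^*\y$ the element $\e$ plays the role of the unit, so absorbing a ``square root'' of $\e^*$ on either side of $\x$ should convert the $\e$-structure into the standard one. Before doing this, one must upgrade the JB$^*$-unitarity of $\e$ to genuine C$^*$-unitarity in $M_n(\Ha_C)$: the Jordan $^*$-subalgebra of $M_n(\Ha_C)$ generated by $\e$ is associative (as in the proof of Proposition~\ref{P:spectral}$(b)$), hence a commutative C$^*$-algebra, so on it the Jordan product coincides with the associative one and $\e\circ\e^*=\1$ yields $\e\e^*=\e^*\e=\1$. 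Lemma~\ref{L:sqrt} then furnishes a unitary $\vv\in H_n(\Ha_C)$ with $\vv\circ\vv=\e$, and the same argument applied to $\vv$ gives $\vv^2=\e$ and $\vv\vv^*=\vv^*\vv=\1$ in $M_n(\Ha_C)$.

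With $\vv$ in hand, $T(\x):=\vv^*\x\vv^*$ is a linear bijection with inverse $\y\mapsto\vv\y\vv$, and $(a)$ reduces to two short calculations. For the product, using $\e^*=(\vv^2)^*=\vv^*\vv^*$, one gets $T(\x\cdot_{\e}\y)=\vv^*\x\e^*\y\vv^*=(\vv^*\x\vv^*)(\vv^*\y\vv^*)=T(\x)T(\y)$. For the involution, using $\vv^*\e=\vv=\e\vv^*$, one gets $T(\x^{*_{\e}})=\vv^*\e\x^*\e\vv^*=\vv\x^*\vv=(\vv^*\x\vv^*)^*=T(\x)^*$.

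For $(b)$ the crucial remark is that the involutions $^*$ and $^\inv$ commute on $\Ha_C$ (and hence entrywise on $M_n(\Ha_C)$), since $x^*=\overline{x^\inv}$. Combined with $\vv^\inv=\vv$, this gives $(\vv^*)^\inv=(\vv^\inv)^*=\vv^*$. Since $^\inv$ is an anti-involution of the associative algebra $M_n(\Ha_C)$, we conclude $T(\x)^\inv=(\vv^*)^\inv\x^\inv(\vv^*)^\inv=\vv^*\x^\inv\vv^*=T(\x^\inv)$. In particular, $T$ sends $^\inv$-hermitian matrices to $^\inv$-hermitian matrices, and applying the same reasoning to $T^{-1}$ (which has an entirely analogous form, with $\vv$ in place of $\vv^*$) gives the reverse inclusion, so $T$ restricts to a bijection of $H_n(\Ha_C)$. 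The only mildly delicate point is the initial upgrade from JB$^*$-unitarity to C$^*$-unitarity of $\e$, which is what legitimizes the compact form $T(\x)=\vv^*\x\vv^*$; once that is in place the rest is direct bookkeeping.
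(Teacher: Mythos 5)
Your proof is correct and follows essentially the same route as the paper: take a unitary square root $\vv$ of $\e$ via Lemma~\ref{L:sqrt} and set $T(\x)=\vv^*\x\vv^*$, then verify the three compatibility identities by direct computation. The only difference is that you spell out two points the paper leaves implicit (the upgrade from Jordan unitarity to C$^*$-unitarity of $\e$ and $\vv$, and the commutation of $^*$ with $^\inv$ behind $(\vv^*)^\inv=\vv^*$), which is a welcome clarification rather than a deviation.
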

 
 \begin{proof}
 Let $\vv\in H_n(\Ha_C)$ be a unitary element such that $\vv^2=\e$. Such $\vv$ exists by Lemma~\ref{L:sqrt}$(b)$. It is clear that $\vv$ commutes with $\e$ in $M_n(\Ha_C)$.
It is enough to set
 $$T(\x)=\vv^*\x\vv^*,\quad \x\in M_n(\Ha_C).$$
 It is clearly a linear bijection, Moreover,
 $$T(\x\cdot_{\e}\y)=T(\x\e^*\y)=\vv^*\x\e^*\y\vv^*=T(\x) T(\y)$$
 as $\e^*=(\vv^*)^2$. Further,
 $$T(\x^{*_{\e}})=T(\e\x^*\e) = \vv^* \e\x^*\e \vv^* =\vv^*\vv \vv \x^* \vv \vv \vv^*=\vv\x^*\vv=(T\x)^*.$$
 
Finally,
$$T(\x^\inv)=\vv^*\x^\inv\vv^*=(T\x)^\inv.$$
This completes the proof.
 \end{proof}

\begin{prop}\label{P:product-dtn}
Let $\e$ be a unitary element in $H_n(\Ha_C)$. Let $T$ be a mapping with the properties from Lemma~\ref{L:isomorphism e-1}. Let us define
$$\dt_{n,\e}\x=\dt_n(T\x),\quad\x\in H_n(\Ha_C).$$
Then the following holds:
\begin{enumerate}[$(i)$]
    \item The function $\dt_{n,\e}$ does not depend on the concrete choice of $T$.
    \item $\dt_n\x=\dt_{n,\e}\x\cdot \dt_n \e$ for $\x\in H_n(\Ha_C)$.
\end{enumerate}
\end{prop}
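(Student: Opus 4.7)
The plan is to prove $(ii)$ first; assertion $(i)$ will then follow. For $(ii)$ I use the explicit $T$ from the proof of Lemma~\ref{L:isomorphism e-1}, namely $T(\x)=\vv^*\x\vv^*$ with $\vv\in H_n(\Ha_C)$ a unitary square root of $\e$. The main tool is Theorem~\ref{T:determinant}$(vii)$, which identifies $\dt_n$ with a square root of the standard complex determinant under the $*$-isomorphism $\widehat{\cdot}\colon M_n(\Ha_C)\to M_{2n}(\ce)$. From $\widehat{\vv}^{\,2}=\widehat{\e}$ and $\widehat{\vv}\cdot\widehat{\vv^*}=I$ one reads off $(\det\widehat{\vv})^2=(\dt_n\e)^2$ and $\det\widehat{\vv}\cdot\det\widehat{\vv^*}=1$, and together with $\det\widehat{T\x}=(\det\widehat{\vv^*})^2\det\widehat{\x}$ this gives
$$(\dt_nT\x\cdot\dt_n\e)^2=(\dt_n\x)^2\quad\text{for every }\x\in H_n(\Ha_C).$$

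The main obstacle is to eliminate the resulting sign ambiguity uniformly in $\x$. Setting $f(\x)=\dt_n\x-\dt_nT\x\cdot\dt_n\e$ and $g(\x)=\dt_n\x+\dt_nT\x\cdot\dt_n\e$, Theorem~\ref{T:determinant}$(vi)$ together with the $\ce$-linearity of $T$ shows that $f$ and $g$ are complex-polynomial functions on the finite-dimensional complex vector space $H_n(\Ha_C)$, and $fg\equiv0$. Since the polynomial ring over $\ce$ is an integral domain, one of them must vanish identically. The computation $T\e=\vv^*\vv^2\vv^*=\1$ gives $f(\e)=0$ while $g(\e)=2\dt_n\e\ne0$ (because $\e$ unitary forces $\dt_n\e$ to be a complex unit by Theorem~\ref{T:spectral decomposition of unitary}). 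Hence $g\not\equiv0$, forcing $f\equiv0$, which proves $(ii)$ for this specific $T$.

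For $(i)$, let $T_1,T_2$ both satisfy the properties of Lemma~\ref{L:isomorphism e-1}. Then $S=T_2\circ T_1^{-1}$ is a $*$-automorphism of the standard C$^*$-algebra $M_n(\Ha_C)\cong M_{2n}(\ce)$ that commutes with $^\inv$. By the Skolem--Noether theorem $S$ is inner, so $\widehat{S\x}$ is similar to $\widehat{\x}$ and hence $\det\widehat{S\x}=\det\widehat{\x}$ and $(\dt_nS\x)^2=(\dt_n\x)^2$. Rerunning the integral-domain argument, now with the base point $\x=\1$ where $S(\1)=\1$ and $\dt_n\1=1$, yields $\dt_nS\x=\dt_n\x$, so $\dt_nT_2\x=\dt_n(S(T_1\x))=\dt_nT_1\x$. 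This proves $(i)$, and combined with $(ii)$ for the specific $T$ it also extends $(ii)$ to any $T$ satisfying Lemma~\ref{L:isomorphism e-1}.
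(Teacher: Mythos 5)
Your proof is correct, but the key step is handled by a genuinely different device than in the paper. Both arguments start from Theorem~\ref{T:determinant}$(vii)$, the explicit form $T\x=\vv^*\x\vv^*$, and innerness of $*$-automorphisms of $M_{2n}$ to get the squared identity $(\dt_n T\x\cdot\dt_n\e)^2=(\dt_n\x)^2$ independently of the choice of $T$; the difference lies in how the resulting sign ambiguity is removed. The paper does this pointwise in $\x$ by passing to the one-variable polynomial $\lambda\mapsto\dt_n(\lambda\e-\x)$: its square is determined, hence so are its roots with multiplicities, and $\dt_n T\x$ is identified with the product of those roots, which Vieta's formulas compute from the leading coefficient $\dt_n\e$ and constant term $(-1)^n\dt_n\x$ supplied by Theorem~\ref{T:determinant}$(viii)$ --- this yields the explicit formula $\dt_{n,\e}\x=(\dt_n\e)^{-1}\dt_n\x$ directly. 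You instead exploit Theorem~\ref{T:determinant}$(vi)$: since $\dt_n$ is a polynomial in the (complex) matrix entries and $T$ is $\ce$-linear, your $f$ and $g$ are polynomial functions on the complex vector space $H_n(\Ha_C)$ (note this is indeed a $\ce$-subspace, as $^\inv$ is the linear involution), so $fg\equiv0$ forces $f\equiv0$ or $g\equiv0$, and evaluation at $\e$ (resp.\ at $\1$ in part $(i)$) selects the branch globally. This is a clean alternative that avoids Theorem~\ref{T:determinant}$(viii)$ altogether, at the cost of leaning on the integral-domain/irreducibility structure of the parameter space; it is also fine that you only need Skolem--Noether innerness by an invertible (not necessarily unitary) element, since conjugation preserves determinants regardless.

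One small slip: to see $\dt_n\e\ne0$ you cite Theorem~\ref{T:spectral decomposition of unitary}, which concerns the determinant $\dt$ of unitaries in $C_6=H_3(\O)$, not $\dt_n$ on $H_n(\Ha_C)$; the correct (and immediate) justification is that $(\dt_n\e)^2=\det\widehat{\e}$ by Theorem~\ref{T:determinant}$(vii)$ and $\widehat{\e}$ is a unitary matrix in $M_{2n}(\ce)$, so its determinant is a complex unit. This is exactly the observation the paper makes, and it does not affect the validity of your argument.
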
 

\begin{proof}
Let us fix any $T$ satisfying the properties from Lemma~\ref{L:isomorphism e-1}. Then $\dt_nT\x$ is the product of all the roots of the polynomial $\lambda\mapsto\dt_n(\lambda\1-T\x)$, each one counted with its multiplicity (this follows from Theorem~\ref{T:determinant}$(viii)$). Moreover, by Theorem~\ref{T:determinant}$(vii)$ we get
$$(\dt_n(\lambda \cdot\1-T\x))^2=\det(\lambda\widehat{\1}-\widehat{Tx})=\det(T(\lambda\widehat{\e}-\widehat{x})).$$
The right-hand side does not depend on the choice of $T$. Indeed, assume that $T_1$ and $T_2$ are two possible choices of $T$. Then $T_2T_1^{-1}$ is a $*$-automorphism of $M_n(\Ha_C)=M_{2n}$. It follows that there is a unitary matrix $\uu\in M_{2n}$ such that $T_2T_1^{-1}\a=\uu^*\a\uu$ for $\a\in M_{2n}$ (see, for example, \cite[Corollary 5.42]{DouglasBook1998} or the celebrated Schur theorem \cite[Theorem 10.2.2]{FlemingJamisonBookVol2}). In particular, $T_2T_1^{-1}$ preserves the determinant, hence
$$\det(T_1\a)=\det(T_2T_1^{-1}T_1\a)=\det(T_2\a)$$
for any $\a\in M_{2n}$. 

Thus, we may assume that $T$ is of the form from the proof of Lemma~\ref{L:isomorphism e-1}. Then
$$\begin{aligned}
\det(T(\lambda\widehat{\e}-\widehat{x}))&=\det(\vv^*(\lambda\widehat{\e}-\widehat{x})\vv^*)=
(\det(\vv^*))^2\det(\lambda\widehat{\e}-\widehat{x})\\&=(\det(\vv^*))^2(\dt_n(\lambda\e-\x))^2.\end{aligned}$$

We deduce
$$(\dt_n(\lambda \cdot\1-T\x))^2=(\det(\vv^*))^2(\dt_n(\lambda\e-\x))^2,$$
so $\dt_n T\x$ is the product of all roots of the polynomial $\lambda\mapsto\dt_n(\lambda\e-\x)$, each one counted with its multiplicity.

By Theorem~\ref{T:determinant}$(viii)$ we know that
$\lambda\mapsto\dt_n(\lambda\e-\x)$ is a polynomial of degree at most $n$ with the coefficient $\dt_n\e$ at $\lambda^n$ and coefficient $\dt_n(-\x)=(-1)^n\dt_n\x$ at $\lambda^0$. Note that $\dt_n\e\ne0$ (it is a complex unit as $(\dt_n\e)^2=\det\widehat{\e}$ is a complex unit being the determinant of a unitary matrix). It follows that the product of the roots (which equals to $\dt_n T\x=\dt_{n,\e}\x$) is $(\dt_n\e)^{-1}\dt_n\x$. This completes the proof.
\end{proof}

\section{Automorphisms of $\O$ and $\O_R$}\label{sec:auto-oct}

By an \emph{automorphism} of $\O$ we mean a linear bijection $T:\O\to\O$ which preserves the structure of $\O$, i.e., which satisfies the following properties:
\begin{enumerate}[$(i)$]
    \item $T(1)=1$,
    \item $T(x^*)=T(x)^*$ for $x\in\O$,
    \item $T(x^\inv)=T(x)^\inv$ for $x\in\O$,
    \item $T(x\boxdot y)=T(x)\boxdot T(y)$ for $x,y\in\O$.
\end{enumerate}
If $T$ is an automorphism of $\O$, then we get
$$T(\overline{x})=T((x^\inv)^*)=(T(x)^\inv)^*=\overline{T(x)}$$  for each $x\in\O$,
and thus $T$ preserves $\O_R$. Then $T|_{\O_R}$ is an \emph{automorphism} of $\O_R$, i.e., a (real-)linear bijection of $\O_R$ onto $\O_R$ satisfying conditions $(i)-(iv)$ for $x,y\in\O_R$ (note that in this case conditions $(ii)$ and $(iii)$ coincide).

Conversely, if $T$ is an automorphism of $\O_R$, the mapping
$$\widetilde{T}(x+iy)=T(x)+iT(y), \quad x,y\in\O_R$$
is clearly an automorphism of $\O$.

We will need also a weaker version of automorphism. A linear bijection $T:\O\to \O$ will be called an \emph{asymmetric triple isomorphism} if
$$T((x\boxdot y^*)\boxdot z)=(T(x)\boxdot T(y)^*)\boxdot T(z)\mbox{ for }x,y,z\in\O.$$
If it satisfies moreover $T(\overline{x})=\overline{T(x)}$ for $x\in\O$, it will be called a \emph{hermitian asymmetric triple isomorphism}.

If $T$ is a hermitian asymmetric triple isomorphism of $\O$, then $T$ preserves $\O_R$ and hence $T|_{\O_R}$ is an asymmetric triple isomorphism of $\O_R$, i.e., a (real-)linear bijection of $\O_R$ satisfying
$$T((x\boxdot y^\inv)\boxdot z)=(T(x)\boxdot T(y)^\inv)\boxdot T(z)\mbox{ for }x,y,z\in\O_R.$$

Conversely, if $T$ is an asymmetric triple isomorphism of $\O_R$, the mapping
$$\widetilde{T}(x+iy)=T(x)+iT(y), \quad x,y\in\O_R$$
is clearly a hermitian asymmetric triple isomorphism of $\O$.

We will later need the results on hermitian asymmetric triple isomorphism on $\O$, but due to the easy correspondence explained above we restrict ourselves to $\O_R$.

The following observation is easy:

\begin{obs}\label{obs:auto}
Let $T:\O_R\to\O_R$ be a linear bijection. Then $T$ is an automorphism if and only if it is an asymmetric triple automorphism and $T(1)=1$.
\end{obs}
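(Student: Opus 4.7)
My plan is to split the equivalence into the two obvious directions and handle the nontrivial one by specializing the identity
$$T((x\boxdot y^\inv)\boxdot z)=(T(x)\boxdot T(y)^\inv)\boxdot T(z)$$
at $x$, $y$, or $z$ equal to $1$.

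For the direct implication, I would just note that if $T$ is an automorphism then conditions $(i)$ and $(iv)$ of the definition immediately yield $T(1)=1$ and
$$T((x\boxdot y^\inv)\boxdot z)=T(x\boxdot y^\inv)\boxdot T(z)=(T(x)\boxdot T(y^\inv))\boxdot T(z),$$
which together with $(iii)$ gives exactly the asymmetric triple identity.

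For the converse, assume $T(1)=1$ and that $T$ is an asymmetric triple isomorphism of $\O_R$. First I would substitute $y=1$ into the defining identity. Since $1^\inv=1$ and $T(1)^\inv=1^\inv=1$, this reduces to
$$T(x\boxdot z)=T(x)\boxdot T(z),\qquad x,z\in\O_R,$$
so condition $(iv)$ holds. Next I would substitute $z=1$ to obtain
$$T(x\boxdot y^\inv)=T(x)\boxdot T(y)^\inv.$$
Combining this with the multiplicativity just established gives
$$T(x)\boxdot T(y^\inv)=T(x)\boxdot T(y)^\inv$$
for all $x,y\in\O_R$. Plugging in $x=1$ (and using $T(1)=1$) yields $T(y^\inv)=T(y)^\inv$, which is condition $(iii)$; since we are on $\O_R$ this coincides with condition $(ii)$ as well. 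Thus $T$ satisfies $(i)$--$(iv)$ and is an automorphism.

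There is no real obstacle here: the only subtle point is ensuring that the simplification after the substitution $x=1$ is legitimate, but $1\boxdot a=a$ in $\O_R$ so no appeal to the cancellation laws from Proposition~\ref{P:real octonions} is needed. The argument is essentially the observation that the asymmetric triple product $(x\boxdot y^\inv)\boxdot z$ encodes both $\boxdot$ and $^\inv$ as soon as the distinguished unit $1$ is preserved.
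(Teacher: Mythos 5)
Your proof is correct, and since the paper states this observation without proof (calling it "easy"), your argument supplies exactly the intended verification: the forward direction is immediate from conditions $(i)$, $(iii)$, $(iv)$, and the converse follows by specializing the asymmetric triple identity at $y=1$ and at $z=1$ (using $1^\inv=1$ and $T(1)=1$), with condition $(ii)$ coinciding with $(iii)$ on $\O_R$. No gaps; in particular you correctly avoid any appeal to the cancellation laws by substituting $x=1$ rather than cancelling $T(x)$.
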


We denote by $e_0,e_1,\dots,e_7$ the canonical basis of $\O_R$ (note that $e_0=1$). We have the following result.

\begin{prop}\label{P:OR-transform}
Let $u\in\O_R$ be any nonzero element. 
\begin{enumerate}[$(a)$]
    \item There is an asymmetric triple isomorphism $T:\O_R\to\O_R$ such that $T(u)\in\er=\span\{e_0\}$;
    \item There is an automorphism $T:\O_R\to\O_R$ such that $T(u)\in\span\{e_0,e_1\}$;
    \item There is an automorphism $T:\O_R\to\O_R$ such that
    $T(e_1)=e_1$ and
    $T(u)\in\span\{e_0,e_1,e_2\}$.
\end{enumerate}
\end{prop}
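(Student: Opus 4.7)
The plan is to handle the three claims separately, exploiting the successively more rigid classes of transformations available.

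For part~(a), since asymmetric triple isomorphisms form a strictly larger class than automorphisms (by Observation~\ref{obs:auto} they coincide precisely when $T(1)=1$), I would look for a ``multiplication-type'' map rather than an automorphism. Setting $v = u^\inv / \norm{u}$, which is a unit-norm element of $\O_R$, the natural candidate is right multiplication $T(x) = x \boxdot v$. Using $u \boxdot u^\inv = \norm{u}^2$ from Proposition~\ref{P:real octonions}$(i)$ we obtain $T(u) = \norm{u} \cdot 1 \in \er$. The technical task is then to verify
\[
\bigl((x \boxdot y^\inv) \boxdot z\bigr) \boxdot v = \bigl((x \boxdot v) \boxdot (v^\inv \boxdot y^\inv)\bigr) \boxdot (z \boxdot v), \quad x,y,z \in \O_R,
\]
where I used that $(y \boxdot v)^\inv = v^\inv \boxdot y^\inv$. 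This identity should follow from the Moufang identities in the alternative algebra $\O_R$ combined with $v \boxdot v^\inv = 1$; if $R_v$ alone does not suffice, a suitable variant based on $v$ (such as the two-sided ``conjugation'' $x \mapsto v^\inv \boxdot x \boxdot v$, which is unambiguous by Artin's theorem since only two generating elements are involved) should close the argument.

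For part~(b), I would instead use the automorphism group $\mathrm{Aut}(\O_R)$. Write $u = \alpha + w$ with $\alpha = \frac12(u + u^\inv) \in \er$ the real part and $w = \frac12(u - u^\inv)$ the imaginary part (so $w^\inv = -w$); take $T = \mathrm{id}$ if $w = 0$. Otherwise, invoke the well-known transitivity of $\mathrm{Aut}(\O_R)$ (classically, the group $G_2$) on the $6$-sphere of unit imaginary octonions to produce an automorphism $T$ with $T(w / \norm{w}) = e_1$. This transitivity can be shown constructively via the Cayley--Dickson doubling process by exhibiting automorphisms that rotate the doubling basis appropriately. Since automorphisms fix $1$ and are real-linear, this yields $T(u) = \alpha + \norm{w}\, e_1 \in \span\{e_0, e_1\}$.

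For part~(c), I would work inside the stabilizer of $e_1$ in $\mathrm{Aut}(\O_R)$. Decompose $u = \alpha_0 + \alpha_1 e_1 + w$ with $w \in \span\{e_2, \ldots, e_7\}$, and take $T = \mathrm{id}$ if $w = 0$. Since automorphisms are isometries fixing $1$, the stabilizer of $e_1$ preserves $\span\{e_2, \ldots, e_7\}$, the orthogonal complement of $\span\{e_0, e_1\}$. This stabilizer acts transitively on the unit $5$-sphere of that complement; classically it is isomorphic to $SU(3)$ acting via its standard representation on $\ce^3 \cong \er^6$, but the transitivity also follows constructively via Cayley--Dickson doubling using automorphisms that leave $\er \oplus \er e_1$ pointwise fixed. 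Picking such a $T$ with $T(w / \norm{w}) = e_2$ gives $T(u) = \alpha_0 + \alpha_1 e_1 + \norm{w}\, e_2 \in \span\{e_0, e_1, e_2\}$.

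The main obstacle is the verification in part~(a): since the octonion product is neither associative nor commutative, the Moufang-style manipulations are delicate, and one must choose the multiplication map so that the cancellations $v \boxdot v^\inv = 1$ interact correctly with the nested brackets on both sides of the required identity. In contrast, parts~(b) and~(c) reduce to standard transitivity facts about $G_2$ and its stabilizer subgroup $SU(3)$, which can be proved either abstractly or by direct Cayley--Dickson constructions.
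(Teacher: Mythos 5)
Your part (a) does not work as proposed. Right multiplication $R_v(x)=x\boxdot v$ by a unit-norm octonion $v$ is in general \emph{not} an asymmetric triple isomorphism: the identity you need,
$$\bigl((x\boxdot y^\inv)\boxdot z\bigr)\boxdot v=\bigl((x\boxdot v)\boxdot(v^\inv\boxdot y^\inv)\bigr)\boxdot(z\boxdot v),$$
involves four independent elements, so neither Artin's theorem nor the Moufang identities reach it, and it actually fails. Concretely, take $v=e_1$ (which arises as $u^\inv/\norm{u}$ for $u=-e_1$), $x=e_2$, $y=-e_4$ (so $y^\inv=e_4$), $z=e_0$. Using the multiplication table recorded in the proof of Lemma~\ref{L:OR-permutation}, the left-hand side is $\bigl((e_2\boxdot e_4)\boxdot e_0\bigr)\boxdot e_1=(-e_6)\boxdot e_1=e_7$, whereas $R_{e_1}(x)=e_3$, $R_{e_1}(y)^\inv=e_5$, $R_{e_1}(z)=e_1$ give $(e_3\boxdot e_5)\boxdot e_1=e_6\boxdot e_1=-e_7$ on the right. (The same failure occurs if one computes directly from the Cayley--Dickson formula, so it is not an artifact of the chosen table.) Your fallback, the conjugation $x\mapsto v^\inv\boxdot x\boxdot v$, cannot repair this: $v$ lies in the commutative associative subalgebra generated by $u$, so this map fixes $u$ and never moves a non-real $u$ into $\er$. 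The paper instead uses the \emph{componentwise quaternionic} right multiplications $(x_1,x_2)\mapsto(x_1h_1,x_2h_2)$ of Lemma~\ref{L:OR-mulipl} --- these are asymmetric triple isomorphisms precisely because each factor $\Ha$ is associative --- interleaved with the basis-permutation automorphisms of Lemma~\ref{L:OR-permutation}; note that a single such multiplication only brings $u$ into $\span\{e_0,e_4\}$, and the permutations are needed to finish.

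Parts (b) and (c) are correct in substance but rest on the classical transitivity of $\mathrm{Aut}(\O_R)\cong G_2$ on the unit sphere of imaginary octonions, respectively of the stabilizer of $e_1$ (isomorphic to $SU(3)$) on the unit sphere of $\span\{e_2,\dots,e_7\}$, which you assert but do not prove; the reductions $u=\alpha+w$ and $u=\alpha_0+\alpha_1e_1+w$ and the observation that automorphisms are isometries fixing $1$ are fine. This is a legitimate alternative route if those facts are cited, but the paper deliberately avoids them: it manufactures the required automorphisms by hand from the same two lemmas (componentwise multiplication with $h_1=1$, plus the explicit permutations $P_1,P_2$), which keeps the argument self-contained and uniform with the proof of part (a). You would either need to supply the constructive $G_2$-transitivity argument you allude to, or replace it by the paper's elementary composition scheme.
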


To prove it we will use two lemmata:

\begin{lemma}\label{L:OR-mulipl}
Let $h_1,h_2\in\mathbb{H}$ be two quaternions of norm $1$. Then the mapping
$T:\O_R\to\O_R$ defined by
$$T(x_1,x_2)=(x_1h_1,x_2h_2)$$
is an asymmetric triple isomorphism. If $h_1=1$, it is even an automorphism.
\end{lemma}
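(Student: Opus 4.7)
My plan is a direct calculation using the Cayley-Dickson formula for the product on $\O_R=\mathbb{H}\times\mathbb{H}$. First, $T$ is clearly real-linear, and it is bijective since $h_1,h_2$ have inverses in $\mathbb{H}$ (being norm-one quaternions, with $h_j^{-1}=h_j^\inv$). The main content is to verify the asymmetric triple identity
\[
T((x\boxdot y^\inv)\boxdot z)=(T(x)\boxdot T(y)^\inv)\boxdot T(z)
\]
for arbitrary $x=(x_1,x_2)$, $y=(y_1,y_2)$, $z=(z_1,z_2)$ in $\O_R$. The automorphism statement will then follow from Observation~\ref{obs:auto}.

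The core computation is to expand both sides using the Cayley-Dickson formulas recalled in subsection~\ref{subsec: Cayley-Dickson process}, namely $(x_1,x_2)^\inv=(x_1^\inv,-x_2)$ and
\[
(x_1,x_2)\boxdot(y_1,y_2)=(x_1 y_1-y_2 x_2^\inv,\ x_1^\inv y_2+y_1 x_2).
\]
Writing $x\boxdot y^\inv=(a,b)$ with $a=x_1 y_1^\inv+y_2 x_2^\inv$ and $b=-x_1^\inv y_2+y_1^\inv x_2$, the left-hand side becomes
\[
T\bigl((az_1-z_2 b^\inv)\,,\,(a^\inv z_2+z_1 b)\bigr)=\bigl((az_1-z_2 b^\inv)h_1\,,\,(a^\inv z_2+z_1 b)h_2\bigr).
\]
On the right, using $T(y)^\inv=(h_1^\inv y_1^\inv,-y_2 h_2)$ and $(x_2 h_2)^\inv=h_2^\inv x_2^\inv$, the key cancellations $h_1 h_1^\inv=1$ and $h_2 h_2^\inv=1$ yield
\[
T(x)\boxdot T(y)^\inv=(a,\ h_1^\inv b\, h_2).
\]
A second application of the Cayley-Dickson formula, combined again with $h_j h_j^\inv=1$ and associativity of $\mathbb{H}$, produces exactly the same pair as above. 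This matching establishes the asymmetric triple identity.

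Finally, when $h_1=1$ we have $T(1,0)=(1,0)=1$, so $T(1)=1$. Combining with the asymmetric triple identity already proved, Observation~\ref{obs:auto} gives that $T$ is a genuine automorphism of $\O_R$. The only real obstacle is bookkeeping the noncommutativity of $\mathbb{H}$: the $h_j$ are inserted on the right by $T$ and must be moved past quaternion products, but associativity of $\mathbb{H}$ and the placement of $h_j$ uniformly on the right allow these manipulations to proceed without any unwanted reordering.
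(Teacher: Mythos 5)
Your proposal is correct and follows essentially the same route as the paper: a direct computation with the Cayley--Dickson product formula on $\O_R=\Ha\times\Ha$, using associativity of $\Ha$, the anti-multiplicativity of $^\inv$, and the cancellations $h_jh_j^\inv=1$, with the automorphism claim then reduced to Observation~\ref{obs:auto} via $T(1)=1$. Your intermediate identity $T(x)\boxdot T(y)^\inv=(a,\,h_1^\inv b\, h_2)$ is a slightly tidier bookkeeping of the same calculation the paper carries out by full expansion, and it checks out.
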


\begin{proof}
We have
$$x\boxdot y^\inv=(x_1,x_2)\boxdot(y_1^\inv,-y_2)=(x_1 y_1^\inv+y_2 x_2^\inv,-x_1^\inv y_2+y_1^\inv x_2)$$
and so
$$(x\boxdot y^\inv)\boxdot z=(x_1 y_1^\inv z_1+y_2 x_2^\inv z_1+z_2y_2^\inv x_1-z_2x_2^\inv y_1,y_1x_1^\inv z_2+x_2y_2^\inv z_2-z_1x_1^\inv y_2+z_1y_1^\inv x_2).$$
In view of the associativity of the multiplication of quaternions and using the fact that $h_1h_1^\inv=h_2h_2^\inv=1$ it easily follows that $T$ is an asymmetric triple isomorphism.

If $h_1=1$, then clearly $T(1)=1$, so $T$ is an automorphism by Observation~\ref{obs:auto}.
\end{proof}

\begin{lemma}\label{L:OR-permutation}
The  linear bijections of $\O_R$ defined by
$$\begin{aligned}
P_1&:e_0\mapsto e_0,e_1\mapsto e_1,e_2\mapsto e_7,e_3\mapsto e_6, e_4\mapsto e_2, e_5\mapsto e_3, e_6\mapsto e_5,e_7\mapsto e_4.
\\
P_2&:e_0\mapsto e_0,e_1\mapsto e_4, e_2\mapsto e_7,e_3\mapsto e_3,e_4\mapsto e_6,e_5\mapsto e_2,e_6\mapsto e_1,e_7\mapsto e_5\end{aligned}$$
are automorphisms of $\O_R$.
\end{lemma}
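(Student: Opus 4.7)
The plan is to verify directly that $P_1$ and $P_2$ satisfy the four conditions in the definition of an automorphism of $\O_R$. Since both permutations fix $e_0=1$, condition $(i)$ is immediate. Next, every $x=\sum_{j=0}^7 a_j e_j\in\O_R$ satisfies $x^\inv=x^*=a_0 e_0-\sum_{j\ge 1}a_j e_j$ (this is the restriction to $\O_R$ of the Cayley-Dickson involution, and the two involutions coincide on the real part), so because $P_j$ fixes $e_0$ and permutes $\{e_1,\dots,e_7\}$ bijectively, conditions $(ii)$ and $(iii)$ are automatic. All the content lies in condition $(iv)$.

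By bilinearity of $\boxdot$ on $\O_R$, to establish $(iv)$ it suffices to verify $P(e_i\boxdot e_k)=P(e_i)\boxdot P(e_k)$ for all $i,k\in\{0,1,\dots,7\}$. The cases $i=0$ or $k=0$ are trivial since $1$ is a two-sided identity and is fixed. For $i=k\ge 1$, both sides reduce to $-1$, because $e_j\boxdot e_j=-1$ for $j\ge 1$ (by Proposition~\ref{P:real octonions}$(i)$, noting $e_j^\inv=-e_j$ and $\norm{e_j}=1$) and the permutation sends $\{e_1,\dots,e_7\}$ to itself. Thus I am reduced to the $42$ pairs with $i\ne k$, both in $\{1,\dots,7\}$.

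These $42$ products are packaged efficiently into $7$ oriented triples. For each unordered pair $\{e_i,e_k\}\subset\{e_1,\dots,e_7\}$ the Cayley-Dickson formula together with alternativity (Proposition~\ref{P:octonions}$(i)$) forces $e_i\boxdot e_k=\varepsilon e_\ell$ and $e_k\boxdot e_i=-\varepsilon e_\ell$ for a unique $\ell\in\{1,\dots,7\}$ and sign $\varepsilon\in\{\pm 1\}$; organizing the results, the $21$ unordered pairs partition into $7$ lines, each of which carries an orientation making a cyclic triple $(e_a,e_b,e_c)$ with $e_a\boxdot e_b=e_c$, $e_b\boxdot e_c=e_a$, $e_c\boxdot e_a=e_b$. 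The concrete first step, therefore, is to unfold the nested Cayley-Dickson product $\boxdot_3=\boxdot$ and read off this list of $7$ oriented triples for $\O_R$ in the basis $e_0,\dots,e_7$ introduced in Section~\ref{sec:CD-C6}. Once the table is written down, verifying that $P_1$ (respectively $P_2$) is multiplicative amounts to checking that it carries each of the $7$ oriented triples to another oriented triple with the correct cyclic orientation, i.e. $7$ equalities of the form $P(e_a)\boxdot P(e_b)=P(e_c)$, each an entry of the table.

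The main obstacle is purely bookkeeping: correctly computing the seven oriented triples from the Cayley-Dickson formula (the signs propagating through $x^{\inv}$ and the swap in the second coordinate require care), and then verifying, case by case, that the lists of oriented triples are preserved under $P_1$ and under $P_2$. Once the table is in hand the two verifications are completely mechanical, and no further structural argument is needed.
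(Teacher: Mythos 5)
Your proposal is correct and follows essentially the same route as the paper: reduce everything to the multiplication table of the basis units (the seven oriented triples $132,154,167,264,275,356,374$, which the paper quotes from the literature rather than re-deriving from the Cayley--Dickson formula) and then check that $P_1$ and $P_2$ permute these oriented triples. The remaining work you defer is exactly the mechanical verification the paper also leaves to the reader, and it does go through.
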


\begin{proof}
Properties $(i)$ and $(iii)$ are easy.  It is enough to prove property $(iv)$ for the basic vectors. This  can be proved using the following description of the multiplication (see \cite[p. 235]{LoSo}):

We have $e_0\boxdot e_j=e_j\boxdot e_0=e_j$ for each $j\in\{0,\dots,7\}$ and $e_j^2=-e_0$ for $j\in\{1,\dots,7\}$. The multiplication of the remaining pairs of  basic vectors is done using the list
$$132,154,167,264,275,356,374.$$
For example, the first scheme, $132$, means that
$$e_1\boxdot e_3=e_2, e_3\boxdot e_2=e_1, e_2\boxdot e_1=e_3$$
and the products in the opposite order have opposite sign.

It is now easy to check that the two permutations preserve this schemes.
\end{proof}

\begin{proof}[Proof of Proposition~\ref{P:OR-transform}]
$(a)$ Let $u=(u_1,u_2)$. Find two quaternions $h_1,h_2$ of norm one such that $u_1h_1,u_2h_2\in\er$. Let $T_1(x)=(x_1h_1,x_2h_2)$. Then $T_1(u)\in\span\{e_0,e_4\}$. Let $P_1$ be the automorphism from Lemma~\ref{L:OR-permutation} Then $P_1T_1(u)\in\span\{e_0,e_2\}$. In particular, it is a quaternion. Find a quaternion $h_3$ of norm one such that $h_3P_1T_1(u)\in\er$. Take $T_2(x)=(x_1h_3,x_2)$. Then we may take $T=T_2P_1T_1$.

$(b)$ Let $u=(u_1,u_2)$. Find a quaternion $h_1$ of norm one such that $u_2h_1\in\er$. Set $T_1(x)=(x_1,x_2h_1)$. Let $P_2$ be from Lemma~\ref{L:OR-permutation}. Then $P_2T_1(u)=(v_1,v_2)$, where $v_1\in\span\{e_0,e_3\}$. Find a quaternion $h_2$ of norm one such that $v_2h_2$ is a multiple of $e_3$. Set $T_2(x)=(x_1,x_2h_2)$. Then $T_2P_2T_1(u)\in\span\{e_0,e_3,e_7\}$, hence $P_1T_2P_2T_1(u)\in\span\{e_0,e_4,e_6\}$, i.e. $P_1T_2P_2T_1(u)=(w_1,w_2)$, where $w_1\in\er$ and $w_2\in\span\{1,e_2\}$. Find a quaternion $h_3$ of norm one such that $w_2h_3\in\er$. Set $T_3(x)=(x_1,x_2h_3)$. It is now enough to set
$$T=P_1T_3P_1T_2P_2T_1.$$

$(c)$ Let $u=(u_1,u_2)$. Find a quaternion $h_1$ of norm one such that $u_2h_1$ is a multiple of $e_3$. Set $T_1(x)=(x_1,x_2h_1)$. Let $P_1$ be from Lemma~\ref{L:OR-permutation}.Then $P_1T_1(u)=(v_1,v_2)$, where $v_1\in\span\{e_0,e_1\}$.  Find a quaternion $h_2$ of norm one such that $v_2h_2\in\er$. Set $T_2(x)=(x_1,xh_2)$. It is enough to take
$$T=P_1T_2P_1T_1.$$
\end{proof}

\begin{remark}
It was pointed out by the referee that assertions $(b)$ and $(c)$ of Proposition~\ref{P:OR-transform} follow also from \cite[Corollary 1.7.5]{springer-veldcamp}. The quoted abstract result deals with composition algebras over a general field, so its application requires some explanation. Firstly, $\O_R$ is a composition algebra over $\er$ in the sense of \cite[Definition 1.2.1]{springer-veldcamp}. The quadratic form $N$ from the definition is just the norm on $\O_R$ (recall that it coincides with the euclidean norm $\norm{\cdot}_2$) and the respective bilinear form (denoted by $\ip{a}{b} := N(a+b) -N(a) -N(b)$ in \cite{springer-veldcamp}) equals the double of the inner product. This follows for example from \cite[Proposition 1.5.3]{springer-veldcamp} applied twice -- at first to $\ce$ considered as a commutative and associative composition algebra over $\er$ and then to the resulting algebra of quaternions (in both cases we take $\lambda=-1$).

Now we may see that assertion $(b)$ of Proposition~\ref{P:OR-transform} follows from \cite[Corollary 1.7.5]{springer-veldcamp} applied to $a=u-\ip u{e_0} e_0$ (where $\ip{\cdot}{\cdot}$ denotes the inner product as in Section~\ref{sec:CD-C6}) and $a'=\norm{a}\cdot e_1$ (and $b=b'=0$). To get assertion $(c)$ we set
$a=a'=e_1$, $b=u-\ip u{e_0}e_0-\ip u{e_1}e_1$ and $b'=\norm{b}\cdot e_2$.
\end{remark} 

\section{Automorphism of $C_6$}\label{sec:auto-c6}

We start by observing that the mapping exchanging two rows and subsequently the two corresponding columns is a Jordan $*$-isomorphism.

\begin{lemma}\label{L:exchanging}
Let $k,l\in \{1,2,3\}$ be two distinct numbers. For $\x\in C_6$ let $U_{k,l}(\x)$ be the matrix made from $\x$ by exchanging the $k$-th and $l$-th rows and subsequently exchanging the $k$-th and $l$-th columns. Then $U_{k,l}$ is a Jordan $*$-automorphism of $C_6$. 
\end{lemma}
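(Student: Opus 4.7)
The plan is to verify at the level of matrix entries the four defining properties of a Jordan $*$-automorphism: that $U_{k,l}$ is a linear bijection of $C_6$ onto itself, that it commutes with the involution $^*$, and that it preserves the Jordan product $\circ$. Writing $\sigma$ for the transposition $(k\;l)$ in the symmetric group on $\{1,2,3\}$, the map has the explicit description $(U_{k,l}\x)_{ij}=\x_{\sigma(i)\sigma(j)}$, so linearity is immediate and $U_{k,l}^2=\mathrm{id}$ gives bijectivity.

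To check that $U_{k,l}\x\in C_6$ whenever $\x\in C_6$, I would compute
$$(U_{k,l}\x)_{ji}=\x_{\sigma(j)\sigma(i)}=(\x_{\sigma(i)\sigma(j)})^\inv=((U_{k,l}\x)_{ij})^\inv,$$
using $\x^\inv=\x$. Preservation of $^*$ is automatic, because on $C_6$ the involution is just entrywise conjugation, which trivially commutes with any permutation of the entries.

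The only step where non-associativity of $\O$ threatens to cause trouble is the preservation of the Jordan product. The key observation that disposes of the difficulty is that the (possibly non-associative) matrix product $\boxdot$ on $M_3(\O)$ itself is preserved by $U_{k,l}$: the summands appearing in $(\x\boxdot\y)_{ij}=\sum_s x_{is}\boxdot y_{sj}$ each involve only a single octonion product, so no hidden brackets arise. The change of variable $t=\sigma(s)$ would yield
$$(U_{k,l}\x\boxdot U_{k,l}\y)_{ij}=\sum_s x_{\sigma(i),\sigma(s)}\boxdot y_{\sigma(s),\sigma(j)}=\sum_t x_{\sigma(i),t}\boxdot y_{t,\sigma(j)}=(\x\boxdot\y)_{\sigma(i)\sigma(j)},$$
and symmetrizing over the two arguments gives $U_{k,l}(\x\circ\y)=U_{k,l}\x\circ U_{k,l}\y$.

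I do not expect any genuine obstacle; the entire argument is an index-level bookkeeping exercise. The one pitfall worth flagging is the temptation to realize $U_{k,l}$ as conjugation by a permutation matrix inside $M_3(\O)$, which would immediately collide with the non-associativity of octonion multiplication. Working directly at the level of entries via the transposition $\sigma$ avoids this issue completely.
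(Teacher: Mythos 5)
Your proof is correct, but it follows a genuinely different route from the paper's. The paper realizes $U_{1,2}$ as $\x\mapsto\J{\uu}{\x^*}{\uu}$ for the symmetry $\uu$ obtained by permuting the first two rows of $\1$; Lemma~\ref{L:shift} then gives at once that this is a triple automorphism, it fixes $\1$ and is therefore a Jordan $*$-automorphism, and the remaining work is an explicit matrix computation identifying $\J{\uu}{\x^*}{\uu}$ with $U_{1,2}(\x)$. You instead verify everything entrywise, and your key observation is exactly the right one: each summand of $(\x\boxdot\y)_{ij}=\sum_s x_{is}\boxdot y_{sj}$ contains a single binary product, so the reindexing $t=\sigma(s)$ shows that $U_{k,l}$ preserves $\boxdot$ on all of $M_3(\O)$ without ever invoking associativity; combined with your (correct) checks that $U_{k,l}$ preserves $^\inv$-hermiticity and entrywise conjugation, this yields the lemma. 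Your argument is more elementary -- it needs no JB$^*$-triple machinery -- and it proves slightly more, since the same bookkeeping works for an arbitrary permutation of the index set and for hermitian matrices over any alternative $*$-algebra. What the paper's approach buys is the explicit representation of $U_{k,l}$ as the $Q$-operator of a concrete symmetry composed with the involution, which ties it to the general Lemma~\ref{L:shift}. Your caution about "conjugation by a permutation matrix" is reasonable, though note that the paper's formula $\J{\uu}{\x^*}{\uu}=2\uu\circ(\x\circ\uu)-\x$ is phrased entirely through the Jordan product and single $\boxdot$-products, so no ambiguous triple products of octonions actually arise there.
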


\begin{proof}
Let us prove it for $k=1$ and $l=2$. The remaining cases are analogous. Let
$$\uu=\begin{pmatrix}
0&1&0\\1&0&0\\0&0&1
\end{pmatrix}.$$
Then $\uu$ is a symmetry (i.e., a self-adjoint unitary element) in $C_6$, thus $\x\mapsto \J {\uu}{\x^*}{\uu}$ is a triple automorphism of $C_6$ by Lemma~\ref{L:shift}. Moreover, it maps $\1$ to $\1$, so it is a Jordan $*$-automorphism.

It remains to compute that $U_{1,2}(\x)=\J {\uu}{\x^*}{\uu}$. So, let us compute.
$$\J {\uu}{\x^*}{\uu}=2 \uu\circ(\x\circ \uu)-(\uu\circ \uu)\circ \x=2 \uu\circ(\x\circ \uu)-\x$$
as $\uu\circ \uu=\1$. Assume
$$\x=\begin{pmatrix}
\alpha & a & b \\ a^\inv & \beta & c \\ b^\inv & c^\inv & \gamma
\end{pmatrix},$$
where $\alpha,\beta,\gamma\in\ce$ and $a,b,c\in\O$. Then
$$\begin{aligned}
\uu\circ \x&=\frac12(\uu\boxdot \x+\x\boxdot \uu) =\frac12\left(
\begin{pmatrix} a^\inv & \beta & c \\\alpha & a & b \\ b^\inv & c^\inv & \gamma
\end{pmatrix} +
\begin{pmatrix}
a &\alpha  & b \\ \beta& a^\inv & c \\ c^\inv & b^\inv & \gamma
\end{pmatrix}
\right)
\\&=\begin{pmatrix}
\frac12(a^\inv+a) &\frac12(\alpha+\beta) &\frac12(b+c) \\ \frac12(\alpha+\beta) &\frac12(a+a^\inv) &\frac12(b+c) \\
\frac12(b^\inv+c^\inv) & \frac12(b^\inv+c^\inv) & \gamma
\end{pmatrix},
\end{aligned}$$
hence
$$\begin{aligned}
2\uu\circ(\uu\circ \x)&=\uu\boxdot(\uu\circ \x)+(\uu\circ \x)\boxdot \uu
\\&= \begin{pmatrix}
 \frac12(\alpha+\beta) &\frac12(a+a^\inv) &\frac12(b+c) \\\frac12(a^\inv+a) &\frac12(\alpha+\beta) &\frac12(b+c) \\
\frac12(b^\inv+c^\inv) & \frac12(b^\inv+c^\inv) & \gamma
\end{pmatrix}\\&\qquad\qquad +
\begin{pmatrix}
\frac12(\alpha+\beta) &\frac12(a^\inv+a) &\frac12(b+c) \\ \frac12(a+a^\inv) &\frac12(\alpha+\beta) &\frac12(b+c) \\
\frac12(b^\inv+c^\inv) & \frac12(b^\inv+c^\inv) & \gamma
\end{pmatrix} \\&=\begin{pmatrix}
\alpha+\beta & a^\inv+a & b+c \\ a+a^\inv & \alpha+\beta & b+c \\ b^\inv+c^\inv & b^\inv+c^\inv & 2\gamma
\end{pmatrix}.
\end{aligned}$$
Thus
$$\J {\uu}{\x^*}{\uu}=2 \uu\circ(\x\circ \uu)-\x
=\begin{pmatrix}
\beta & a^\inv & c \\
a &\alpha & b \\ c^\inv& b^\inv &\gamma
\end{pmatrix}=U_{1,2}(\x).$$
\end{proof}

\begin{prop}\label{P:automorphisms-C6}
Let $T$ be a hermitian asymmetric triple isomorphism of $\O$. 
Define $\widetilde{T}:C_6\to C_6$ by
$$\widetilde{T}\begin{pmatrix}
\alpha & a & b \\ a^\inv & \beta & c \\ b^\inv & c^\inv & \gamma
\end{pmatrix}=\begin{pmatrix}
\alpha & T(a) & T(b)\boxdot T(1)^\inv \\ T(a)^\inv & \beta & T(c^\inv)^\inv \\ T(1)\boxdot T(b)^\inv & T(c^\inv) & \gamma
\end{pmatrix}.$$
Then $\widetilde{T}$ is a Jordan $*$-automorphism of $C_6$.
\end{prop}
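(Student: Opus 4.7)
My plan is to verify the defining properties of $\widetilde T$ one by one: linearity, well-definedness as a map into $C_6$, bijectivity, preservation of the unit, of the involution $^*$, and of the Jordan product. Most parts are routine; the core content is Jordan product preservation.

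For well-definedness, I use that $^\inv$ is an anti-involution on $\O$ (a standard Cayley-Dickson property), giving $(T(b)\boxdot T(1)^\inv)^\inv=T(1)\boxdot T(b)^\inv$, which matches the $(3,1)$-entry of $\widetilde T\x$ with the $^\inv$-conjugate of its $(1,3)$-entry; the $(2,3)/(3,2)$ pair is analogous. Linearity and $\widetilde T(\1)=\1$ are obvious. For $^*$-preservation ($^*$ acts entrywise as complex conjugation on $C_6$), I use that $T$ is hermitian, so $T(\overline x)=\overline{T(x)}$ and in particular $T(1)\in\O_R$ (hence $\overline{T(1)^\inv}=T(1)^\inv$). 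For injectivity, if $\widetilde T\x=0$ then $\alpha=\beta=\gamma=0$ and $T(a)=T(c^\inv)=T(b)\boxdot T(1)^\inv=0$; the last equation forces $T(b)=(T(b)\boxdot T(1)^\inv)\boxdot T(1)=0$ by alternativity (Proposition~\ref{P:octonions}$(i)$) and $T(1)\boxdot T(1)^\inv=1$ (from $\|T(1)\|=1$, as $T$ preserves the triple product of $\O$ via Proposition~\ref{P:octonions}$(iii)$ hence is an isometry by Kaup, combined with Proposition~\ref{P:real octonions}$(i)$). A dimension count then gives bijectivity.

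The main step is Jordan product preservation. By polarization it suffices to show $\widetilde T(\x^2)=(\widetilde T\x)^2$ for each $\x\in C_6$. Both squares can be expanded entrywise using the matrix product $\boxdot$, and the resulting identities in $\O$ are verified using four tools: (a) the consequence $T(x\boxdot z)=(T(x)\boxdot T(1)^\inv)\boxdot T(z)$ of the asymmetric triple-isomorphism identity with $y=1$ (using $T(1)^*=T(1)^\inv$, valid because $T(1)\in\O_R$); (b) alternativity of $\boxdot$ and, equivalently, Artin's theorem that any two octonions generate an associative subalgebra, which permits free reassociation within such a subalgebra; (c) the anti-multiplicativity of $^\inv$; and (d) the identity $T(1)\boxdot T(1)^\inv=1$. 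Deploying these systematically, each entry of $\widetilde T(\x^2)$ is matched with the corresponding entry of $(\widetilde T\x)^2$.

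The main obstacle is the calculation just described: off-diagonal entries of $\x^2$ involve three-fold octonionic products such as $a\boxdot c$ and $b\boxdot c^\inv$, and the $T(1)^\inv$-twists inserted in the $(1,3)$ and $(2,3)$ entries of $\widetilde T\x$ generate extra $T(1)^\inv$-factors on the right-hand side that must be absorbed through repeated reassociation inside two-generator subalgebras. The bookkeeping is the principal difficulty; diagonal entries are easier, as they reduce to scalar octonionic norms $u\boxdot u^\inv\in\ce$, which $T$ preserves by isometry.
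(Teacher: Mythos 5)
Your overall strategy --- entrywise verification of $\widetilde T(\x^2)=(\widetilde T\x)^2$, with the diagonal handled by isometry arguments and the off-diagonal by the asymmetric triple-isomorphism identity --- is the same as the paper's, and your preliminary steps (well-definedness, $^*$-preservation, bijectivity) are fine. However, there is a genuine gap in the toolkit you propose for the off-diagonal entries. You extract only the $y=1$ instance $T(x\boxdot z)=(T(x)\boxdot T(1)^\inv)\boxdot T(z)$ from the defining identity and then plan to finish with alternativity, anti-multiplicativity of $^\inv$, and $T(1)\boxdot T(1)^\inv=1$. This cannot close the argument. The $(1,3)$-entry of the square requires $T(a\boxdot c)\boxdot T(1)^\inv=T(a)\boxdot T(c^\inv)^\inv$, and the $(2,3)$-entry requires $T\bigl((a^\inv\boxdot b)^\inv\bigr)^\inv=T(a)^\inv\boxdot\bigl(T(b)\boxdot T(1)^\inv\bigr)$; both compare $T(w^\inv)$ with $T(w)^\inv$ (note the $(2,3)$-slot of $\widetilde T$ carries the twist $c\mapsto T(c^\inv)^\inv$, which is \emph{not} a $T(1)^\inv$-twist, contrary to what your ``main obstacle'' paragraph suggests). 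An asymmetric triple isomorphism need not commute with $^\inv$: the map $T(x_1,x_2)=(x_1h_1,x_2h_2)$ of Lemma~\ref{L:OR-mulipl} has $T(x^\inv)\ne T(x)^\inv$ unless $h_1$ is central. Your tool (a) contains no information relating $T(w^\inv)$ to $T(w)^\inv$, and the expression $\bigl((T(a)\boxdot T(1)^\inv)\boxdot T(c)\bigr)\boxdot T(1)^\inv$ it produces involves three generically independent octonions, so Artin's theorem gives you no licence to reassociate it.

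What is actually needed is the asymmetric identity with a \emph{nontrivial middle element}, combined with hermiticity to convert $T(y)^*$ into the twists appearing in $\widetilde T$. For example, taking $y$ with $y^*=c$ (so that, by hermiticity, $T(y)^*=T(c^\inv)^\inv$) and $z=1$ yields $T(a\boxdot c)=\bigl(T(a)\boxdot T(c^\inv)^\inv\bigr)\boxdot T(1)$, whence $T(a\boxdot c)\boxdot T(1)^\inv=T(a)\boxdot T(c^\inv)^\inv$ by Proposition~\ref{P:real octonions}$(ii)$; the $(2,3)$-entry is handled analogously with $x=1$, $y=\overline b$, $z=a$. This is precisely how the paper argues. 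A secondary, more easily repaired gloss: for the diagonal you must control $\ip{v}{\overline v}$ for $v=T(b)\boxdot T(1)^\inv$ (not merely $\norm{v}$), which requires the inner-product identities of Proposition~\ref{P:octonions}$(ii)$ and Proposition~\ref{P:real octonions}$(ii)$ together with hermiticity of $T$, not bare isometry.
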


\begin{proof}
It is clear that $\widetilde{T}$ is a linear mapping. Further, $T(1)$ is a non-zero element of $\O_R$, so we may use Proposition~\ref{P:real octonions}$(iii)$ to deduce that $\widetilde{T}$ is one-to-one. Therefore, $\widetilde{T}$ is a linear bijection of $C_6$.

Since $T(1)\in\O_R$ and $T$ commutes with the conjugation, we get that $\widetilde{T}(\x^*)=\widetilde{T}(\x)^*$ for each $\x\in C_6$. Hence, it remains to prove that $\widetilde{T}(\x^2)=\widetilde{T}(\x)^2$ for $\x\in C_6$.

First observe that $T$ is a unitary operator on $\O$ equipped with the underlying Hilbert space structure (as it is a triple automorphism commuting with the conjugation cf. Proposition \ref{P:octonions}$(iii)$).

Let us fix 
$$\x=\begin{pmatrix}
\alpha & a & b \\ a^\inv & \beta & c \\ b^\inv & c^\inv & \gamma
\end{pmatrix}\in C_6.$$
Then
$$\x^2=\begin{pmatrix}
\alpha^2+a\boxdot a^\inv+b\boxdot b^\inv & \alpha a+\beta a+b\boxdot c^\inv & \alpha b+a\boxdot c+\gamma b \\ \alpha a^\inv+\beta a^\inv+c\boxdot b^\inv & a^\inv\boxdot a+\beta^2+c\boxdot c^\inv & a^\inv\boxdot b+\beta c+\gamma c \\
\alpha b^\inv+ c^\inv\boxdot a^\inv+\gamma b^\inv & b^\inv\boxdot a+\beta c^\inv+\gamma c^\inv & b^\inv\boxdot b+c^\inv \boxdot c+\gamma^2
\end{pmatrix}$$
We are going to prove that $\widetilde{T}(\x^2)=\widetilde{T}(\x)^2$ entrywise. Let us first look at the diagonal. By the definition of $\widetilde{T}$ the diagonal of $\widetilde{T}(\x^2)$ coincides with the diagonal of $\x^2$. Hence, we have to prove that
\begin{equation*}
\begin{aligned}
\alpha^2+a\boxdot a^\inv+b\boxdot b^\inv&=\alpha^2+T(a)\boxdot T(a)^\inv + (T(b)\boxdot T(1)^\inv)\boxdot (T(1)\boxdot T(b)^\inv),
\\ a^\inv\boxdot a+\beta^2+c\boxdot c^\inv&= T(a)^\inv \boxdot T(a)+\beta^2+T(c^\inv)^\inv\boxdot T(c^\inv),\\
b^\inv\boxdot b+c^\inv \boxdot c+\gamma^2&=(T(1)\boxdot T(b)^\inv)\boxdot (T(b)\boxdot T(1)^\inv)+T(c^\inv)\boxdot T(c^\inv)^\inv+\gamma^2.
\end{aligned}\end{equation*}
To prove these equalities we will use the following computation:
$$\begin{aligned}
T(a)\boxdot T(a)^\inv&=\ip{T(a)}{\overline{T(a)}}=\ip{T(a)}{T(\overline{a})}=\ip{a}{\overline{a}}=a\boxdot a^\inv,
\end{aligned}$$
The first equality follows from Lemma~\ref{L:CD}$(a)$, the second one follows from the fact that $T$ commutes with the conjugation. In the third one we use that $T$ is a unitary operator. The last equality follows again from Lemma~\ref{L:CD}$(a)$.

If we apply the just proved equality to $c^\inv$ in place of $a$ we get
$$T(c^\inv)\boxdot T(c^\inv)^\inv=c^\inv\boxdot c.$$
Further,
$$\begin{aligned}
(T(1)\boxdot T(b)^\inv)\boxdot (T(b)\boxdot T(1)^\inv)&=
(T(b)\boxdot T(1)^\inv)^\inv\boxdot (T(b)\boxdot T(1)^\inv)
\\&=\ip{T(b)\boxdot T(1)^\inv}{\overline{T(b)\boxdot T(1)^\inv}}\\&=\ip{T(b)\boxdot T(1)^\inv}{T(\overline{b})\boxdot T(1)^\inv}
\\&=\ip{T(b)}{(T(\overline{b})\boxdot T(1)^\inv)\boxdot T(1)}
=\ip{T(b)}{T(\overline{b})}
\\&=\ip{b}{\overline{b}}=b\boxdot b^\inv
\end{aligned}
$$
The first equality follows from the basic properties of $\O$, the second one from Lemma~\ref{L:CD}$(a)$. In the third one we use that $T$ commutes with the conjugation and that $T(1)\in\O_R$. The fourth one follows from Proposition~\ref{P:octonions}$(ii)$. The fifth one follows from Proposition~\ref{P:real octonions}$(ii)$ (applied to the real and imaginary parts of $T(\overline{b})$, note that $T(1)\in \O_R$ and $\norm{T(1)}=1$).
The sixth equality holds because $T$ is a unitary operator. The last equality follows again from Lemma~\ref{L:CD}$(a)$.

By combining the above computations we deduce that the diagonals of $\widetilde{T}(\x^2)$ and $\widetilde{T}(\x)^2$ coincide. 
It remains to prove the coincidence of the components of these two elements outside the diagonal. Since they are $^\inv$-hermitian, it is enough to prove they coincide above the diagonal. This reduces to proving three equalities:
$$\begin{aligned}
 T(\alpha a+\beta a+b\boxdot c^\inv) &= \alpha T(a)+\beta T(a)+(T(b)\boxdot T(1)^\inv)\boxdot T(c^\inv), 
\\   T(\alpha b+a\boxdot c+\gamma b)\boxdot T(1)^\inv &= \alpha T(b)\boxdot T(1)^\inv+T(a)\boxdot T(c^\inv)^\inv+\gamma T(b)\boxdot T(1)^\inv,
\\  T((a^\inv\boxdot b+\beta c+\gamma c)^\inv)^\inv &=T(a)^\inv\boxdot (T(b)\boxdot T(1)^\inv)+\beta T(c^\inv)^\inv+\gamma T(c^\inv)^\inv.
\end{aligned}$$
To prove them we use, in addition to linearity of $T$, the assumption that it is a hermitian asymmetric triple isomorphism and so
$$\begin{aligned}
T(b\boxdot c^\inv)&=T((b\boxdot 1^\inv)\boxdot c^\inv)=(T(b)\boxdot T(1)^\inv)\boxdot T(c^\inv),\\
T(a\boxdot c)\boxdot T(1)^\inv&=T((a\boxdot (c^\inv)^\inv)\boxdot 1)\boxdot T(1)^\inv
=((T(a)\boxdot T(c^\inv)^\inv)\boxdot T(1))\boxdot T(1)^\inv
\\&=T(a)\boxdot T(c^\inv)^\inv,
\\
T((a^\inv\boxdot b)^\inv)^\inv&=T(b^\inv\boxdot a)^\inv = T((1\boxdot b^\inv) \boxdot a)^\inv
= ((T(1)\boxdot T(b)^\inv)\boxdot T(a))^\inv\\&=T(a)^\inv\boxdot (T(b)\boxdot T(1)^\inv).
\end{aligned}$$
Note that in the last equality of the second computation we used, similarly as above, Proposition~\ref{P:real octonions}$(ii)$.

This completes the proof.
\end{proof}

\begin{cor}\label{cor:C6-automorphisms}
Let $T$ be a hermitian asymmetric triple isomorphism of $\O$. 
Define $\widetilde{T}_j:C_6\to C_6$ for  $j=1,2,3$ by
$$\begin{aligned}
\widetilde{T}_1\begin{pmatrix}
\alpha & a & b \\ a^\inv & \beta & c \\ b^\inv & c^\inv & \gamma
\end{pmatrix}&=\begin{pmatrix}
\alpha & T(a) & T(1)\boxdot T(b^\inv) \\ T(a)^\inv & \beta & T(c^\inv)^\inv \\ T(b^\inv)^\inv\boxdot T(1)^\inv & T(c^\inv) & \gamma
\end{pmatrix},\\
\widetilde{T}_2\begin{pmatrix}
\alpha & a & b \\ a^\inv & \beta & c \\ b^\inv & c^\inv & \gamma
\end{pmatrix}&=\begin{pmatrix}
\alpha & T(a^\inv)^\inv & T(b^\inv)^\inv \\ T(a^\inv) & \beta & T(c)\boxdot T(1)^\inv \\  T(b^\inv) & T(1)\boxdot T(c)^\inv & \gamma
\end{pmatrix},\\
\widetilde{T}_3\begin{pmatrix}
\alpha & a & b \\ a^\inv & \beta & c \\ b^\inv & c^\inv & \gamma
\end{pmatrix}&=\begin{pmatrix}
\alpha & T(a)\boxdot T(1)^\inv & T(b) \\ T(1)\boxdot T(a)^\inv & \beta & T(c) \\  T(b)^\inv & T(c)^\inv & \gamma
\end{pmatrix}.
\end{aligned}$$
Then $\widetilde{T}_1,\widetilde{T}_2,\widetilde{T}_3$  are Jordan $*$-automorphisms of $C_6$.
\end{cor}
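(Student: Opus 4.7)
The plan is to obtain each $\widetilde{T}_j$ as a conjugate of the Jordan $*$-automorphism $\widetilde{T}$ from Proposition~\ref{P:automorphisms-C6} by one of the row/column exchange maps $U_{k,l}$ from Lemma~\ref{L:exchanging}. Since both $\widetilde{T}$ and each $U_{k,l}$ are Jordan $*$-automorphisms of $C_6$, any such composition $U_{k,l}\circ\widetilde{T}\circ U_{k,l}$ is automatically a Jordan $*$-automorphism, so the entire task reduces to matching each $\widetilde{T}_j$ with the correct $U_{k,l}$ and checking the resulting explicit formula.

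The natural guess is to pair $\widetilde{T}_1$ with $U_{1,3}$, $\widetilde{T}_2$ with $U_{1,2}$, and $\widetilde{T}_3$ with $U_{2,3}$. The motivation is that $\widetilde{T}$ singles out the off-diagonal slot $(1,3)$ (this is where the factor $T(1)$ is inserted), while each $\widetilde{T}_j$ places the analogous factor in a different slot, namely the image of $(1,3)$ under the permutation induced by $U_{k,l}$.

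Verifying each identity $\widetilde{T}_j = U_{k,l}\circ\widetilde{T}\circ U_{k,l}$ is then a direct matrix computation. Starting from a generic $\x=\bigl(\begin{smallmatrix}\alpha & a & b\\ a^\inv & \beta & c\\ b^\inv & c^\inv & \gamma\end{smallmatrix}\bigr)$, first read off the entries of $U_{k,l}(\x)$ (which permutes the diagonal entries and relabels the off-diagonal ones, with some of them acquiring an $^\inv$), then apply the defining formula of $\widetilde{T}$ to the result, and finally apply $U_{k,l}$ once more. The main obstacle is purely bookkeeping: carefully tracking how the nine entries permute, which of them acquire an $^\inv$, and where the factors $T(1)$ and $T(1)^\inv$ end up. No conceptual ingredient beyond Lemma~\ref{L:exchanging} and Proposition~\ref{P:automorphisms-C6} is needed.
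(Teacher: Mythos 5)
Your proposal is correct and is exactly the paper's proof: the authors likewise set $\widetilde{T}_1=U_{1,3}\widetilde{T}U_{1,3}$, $\widetilde{T}_2=U_{1,2}\widetilde{T}U_{1,2}$ and $\widetilde{T}_3=U_{2,3}\widetilde{T}U_{2,3}$ and invoke Lemma~\ref{L:exchanging} and Proposition~\ref{P:automorphisms-C6}. One caveat if you actually carry out the bookkeeping: the conjugation $U_{1,3}\widetilde{T}U_{1,3}$ produces $T(1)\boxdot T(b^\inv)^\inv$ in the $(1,3)$ slot (and $T(b^\inv)\boxdot T(1)^\inv$ in the $(3,1)$ slot), not $T(1)\boxdot T(b^\inv)$ as printed in the statement of $\widetilde{T}_1$ --- an apparent typo in the corollary rather than a flaw in your argument, since the formulas for $\widetilde{T}_2$ and $\widetilde{T}_3$ check out exactly.
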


\begin{proof}
Let $\widetilde{T}$ be the automorphism from Propoisition~\ref{P:automorphisms-C6}. It is enough to observe that
$$\widetilde{T}_1=U_{1,3}\widetilde{T}U_{1,3}, 
\widetilde{T}_2=U_{1,2}\widetilde{T}U_{1,2}
\widetilde{T}_3=U_{2,3}\widetilde{T}U_{2,3},$$
where we use the automorphisms $U_{k,l}$ from Lemma~\ref{L:exchanging}.
\end{proof}

\section{Minimal projection in $C_6$}\label{sec:projections}

The aim of this section is to prove the following theorem which provides an explicit description of minimal projections in $C_6$. The result, which is interesting by itself, provides a newfangled detailed description of these elements with potential applications to improve our understanding of this exceptional Jordan algebra.

\begin{thm}\label{T:minimal projections}
 Minimal projections in $C_6$ are exactly elements of the form
$$\begin{gathered}
\begin{pmatrix}
0 & 0 & 0\\0&0&0\\0&0&1
\end{pmatrix},\begin{pmatrix}
0 & 0 & 0\\0&\alpha&a\\0&a^\inv&\frac1\alpha\norm{a}^2
\end{pmatrix},\mbox{ where }\alpha\in\er\setminus\{0\}, a\in\O_R, \alpha=\alpha^2+\norm{a}^2,\\
  \begin{pmatrix}
    \alpha & a & b \\
    a^\inv &\frac1\alpha \norm{a}^2 & \frac1\alpha a^\inv\boxdot b \\ 
    b^\inv & \frac1\alpha b^\inv \boxdot a & \frac1\alpha \norm{b}^2
    \end{pmatrix}, \mbox{ where }\alpha\in\er\setminus\{0\}, a,b\in\O_R, \alpha=\alpha^2+\norm{a}^2+\norm{b}^2.
    \end{gathered}$$
\end{thm}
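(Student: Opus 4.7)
The plan is to prove both containments by writing $\p=\begin{pmatrix}\alpha & a & b \\ a^\inv & \beta & c \\ b^\inv & c^\inv & \gamma\end{pmatrix}$ with $\alpha,\beta,\gamma\in\er$ and $a,b,c\in\O_R$ (forced by self-adjointness), computing $\p^2=\p$ entrywise, and exploiting two consequences of alternativity established in Section~\ref{sec:CD-C6}: the identity $a\boxdot(a^\inv\boxdot b)=\norm{a}^2 b$ from Proposition~\ref{P:real octonions}$(ii)$, and the composition law $\norm{a\boxdot b}=\norm{a}\cdot\norm{b}$ in $\O_R$.

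For the easy direction, for each of the three listed matrices self-adjointness is immediate, and idempotency reduces, via the two identities above, to the stated diagonal constraint. In form (C), for instance, the $(1,2)$-entry of $\p^2$ computes as $\alpha a+(\norm{a}^2/\alpha)a+b\boxdot(b^\inv\boxdot a)/\alpha=\alpha a+(\norm{a}^2/\alpha)a+(\norm{b}^2/\alpha)a=((\alpha^2+\norm{a}^2+\norm{b}^2)/\alpha)a=a$, and the analogous verifications work for the diagonal entries and for the other two forms. Dividing the defining hypothesis by $\alpha$ yields $\alpha+\beta+\gamma=1$, and minimality then follows from the standard fact that on a rank-three JB$^*$-algebra the canonical trace of a projection equals its rank, so a projection with trace one must be minimal.

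For the classification direction I would expand $\p^2=\p$ to obtain the diagonal equations $\alpha=\alpha^2+\norm{a}^2+\norm{b}^2$ (and its two cyclic analogues) together with the off-diagonal relations
\begin{equation*}
(1-\beta-\gamma)c=a^\inv\boxdot b,\quad (1-\alpha-\gamma)b=a\boxdot c,\quad (1-\alpha-\beta)a=b\boxdot c^\inv.
\end{equation*}
If $\alpha=0$ then $\norm{a}^2+\norm{b}^2=0$, so $a=b=0$ and $\p$ reduces to a minimal projection in the lower $2\times 2$ block; the same sort of analysis gives form (B) when $\beta\ne 0$ and $\p=\e_3$ (form (A)) when $\beta=0$.

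The principal obstacle is the case $\alpha>0$, which must be reduced to form (C). The strategy is to left-multiply $(1-\alpha-\gamma)b=a\boxdot c$ by $a^\inv\boxdot$ and apply the alternative identity, obtaining $(1-\alpha-\gamma)(1-\beta-\gamma)=\norm{a}^2$ when $c\ne 0$; cyclic permutations yield the two analogous relations for $\norm{b}^2$ and $\norm{c}^2$. Setting $\tau=1-\alpha-\beta-\gamma$ and combining with the diagonal equations collapses the system to the single constraint $\tau(\tau+1)=0$, hence $\alpha+\beta+\gamma\in\{1,2\}$. The value $2$ would make $\1-\p$ a trace-one projection (hence minimal), forcing $\p$ to have rank two and contradicting minimality, so $\tau=0$. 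Substituting $1-\beta-\gamma=\alpha$ in the first off-diagonal relation gives $c=(a^\inv\boxdot b)/\alpha$; substituting $1-\alpha-\gamma=\beta$ in the second and invoking $a\boxdot(a^\inv\boxdot b)=\norm{a}^2 b$ yields $\beta=\norm{a}^2/\alpha$, and symmetrically $\gamma=\norm{b}^2/\alpha$, producing form (C). The degenerate subcases (where one of $a,b,c$ vanishes) are handled by direct inspection: the composition law implies that if two of $a,b,c$ are nonzero so is the third, so the only degeneracies have two or more of them vanishing, and each such case falls under form (C) with the appropriate entries set to zero.
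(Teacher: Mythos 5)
Your proof is essentially correct, but it organizes the argument quite differently from the paper, and there are two loose ends worth flagging.

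The paper's necessity argument uses minimality \emph{first}: after reducing to $\alpha\ne0$ via the exchange automorphisms $U_{k,l}$, it computes $\J{\q}{\p_1}{\q}$ for the canonical projection $\p_1$ and reads off $\norm{a}^2=\alpha\beta$, $\norm{b}^2=\alpha\gamma$, $a^\inv\boxdot b=\alpha c$ directly from the requirement that this be a scalar multiple of $\q$; idempotency then only supplies the trace-one constraint. You instead squeeze everything out of $\p^2=\p$ alone (your relations $(1-\alpha-\gamma)(1-\beta-\gamma)=\norm{a}^2$ etc.\ and the resulting $\tau(\tau+1)=0$ are correct, as I checked), and invoke minimality only at the very end to discard $\alpha+\beta+\gamma=2$. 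For sufficiency, the paper proves minimality by explicitly exhibiting a nonzero projection $\r\perp\q$ with $\q+\r\ne\1$ (so that $\q,\r,\1-\q-\r$ form three orthogonal nonzero projections in a rank-three factor), whereas you appeal to the fact that the diagonal trace of an idempotent in $H_3(\O)$ equals its rank. Your route is shorter and arguably more transparent about which constraints come from idempotency versus minimality; the paper's route is self-contained.

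Two caveats. First, the trace-equals-rank fact is genuinely external: it is true and classical for the exceptional Jordan algebra, but it is not proved or cited in the paper, and you use it in both directions (to certify minimality of the listed forms and to exclude trace-two idempotents). You should either cite it properly or replace it by the paper's explicit construction of a complementary orthogonal projection. Second, your final sentence on degeneracies is not correct as stated: with $\alpha\ne0$, the subcase $a=b=0$, $c\ne0$ gives $\alpha=1$, $\beta+\gamma=1$, i.e.\ $\p=\p_1+\q'$ with $\q'$ a nonzero projection in the lower $2\times2$ block; this does \emph{not} fall under form (C) with entries set to zero and must instead be \emph{excluded} by minimality (it has trace $2$). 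Likewise the all-zero subcase $a=b=c=0$ yields $\p=\operatorname{diag}(1,\beta,\gamma)$ with $\beta,\gamma\in\{0,1\}$, and minimality is again needed to force $\beta=\gamma=0$. These are easy fixes, but as written the degenerate analysis misclassifies a case rather than ruling it out.
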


Let us comment a bit the above formulae for minimal projections. In the second case the third row may be obtained from the second one by multiplying with $\frac1\alpha a^\inv$ from the left. Similarly, in the third case the second row may be obtained from the first one by multiplying with $\frac1\alpha a^\inv$ from the left and the third row may be obtained from the first one by multiplying with $\frac1\alpha b^\inv$ from the left.

The rest of this section is devoted to proving this theorem. The proof will have two parts -- on one hand we need to show that any minimal projection is of the required form and then we need to show the converse, that any matrix of the prescribed form is a minimal projection.

\begin{proof}[Proof of the necessity.]
Assume that $\q$ is a minimal projection in $C_6$. Firstly, it is a projection, i.e., $\q^*=\q$ and $\q=\q^2=\q\boxdot\q$. The condition $\q^*=\q$ implies that
\begin{equation}\label{eq:matrix q}
    \q=\begin{pmatrix}
\alpha & a & b \\ a^\inv & \beta & c \\ b^\inv & c^\inv & \gamma
\end{pmatrix},\mbox{ where } \alpha,\beta,\gamma\in\er\mbox{ and }a,b,c\in\O_R.\end{equation} 
Further, the condition $\q\boxdot \q=\q$ means that
\begin{equation}\label{eq:q2=q}
    \begin{aligned}
\alpha&=\alpha^2+a\boxdot a^\inv +b\boxdot b^\inv,\\
\beta&=a^\inv\boxdot a +\beta^2+c\boxdot c^\inv,\\ \gamma&=b^\inv\boxdot b+c^\inv\boxdot c+\gamma^2,\\
a&=\alpha a+\beta a+b\boxdot c^\inv,\\
b&=\alpha b+a\boxdot c+\gamma b,\\
c&=a^\inv\boxdot b+\beta c+\gamma c.
\end{aligned}\end{equation}

Recall that for any $y\in \O_R$ we have $y\boxdot y^\inv=\norm{y}^2$ (by Proposition~\ref{P:real octonions}$(i)$). Hence, if $\alpha=0$, the first equality says that $a=b=0$ and thus the first row and the first column of $\q$ are zero. Similarly, if $\beta=0$, then the second row and second column of $\q$ are zero and the analogous statement holds for $\gamma$ as well. Since $\q\ne0$, necessarily one of the numbers $\alpha,\beta,\gamma$ is nonzero. 

To simplify the situation we will first assume that $\alpha\ne0$. This assumption may be done without loss of generality, up to applying one of the automorphisms $U_{1,2}$ or $U_{1,3}$ from Lemma~\ref{L:exchanging}.

Next, $\q$ is minimal. This means that $\J {\q}{\x}{\q}$ is a scalar multiple of $\q$ for each $\x\in C_6$. We will apply it for the  canonical projection
$$\p_1=\begin{pmatrix}
1&0&0\\0&0&0\\0&0&0
\end{pmatrix}$$
in place of $\x$.
 
Let us compute 
$$\J {\q}{\p_1}{\q}=2 \q\circ(\q\circ \p_1)-\q\circ \p_1.$$
We have
$$\p_1\circ \q=\begin{pmatrix}
\alpha & \frac12a & \frac12b \\ \frac12 a^\inv & 0 & 0 \\ \frac12 b^\inv & 0 & 0
\end{pmatrix}$$
and
$$\begin{aligned}
(\p_1\circ& \q)\boxdot \q  = \\
&\begin{pmatrix}
\alpha^2+\frac12 a\boxdot a^\inv+\frac12 b\boxdot b^\inv & \alpha a+\frac12\beta a+\frac12 b\boxdot c^\inv & \alpha b+\frac12 a\boxdot c +\frac12 \gamma b
\\ \frac12\alpha a^\inv &\frac12 a^\inv \boxdot a & \frac12 a^\inv \boxdot b
\\ \frac12 \alpha b^\inv & \frac12 b^\inv \boxdot a & \frac 12 b^\inv\boxdot b 
\end{pmatrix},
\end{aligned}$$
so
\begin{multline*}
2(\p_1\circ \q)\circ \q\\=\begin{pmatrix}
2\alpha^2+ a\boxdot a^\inv+ b\boxdot b^\inv & \frac32\alpha a+\frac12\beta a+\frac12 b\boxdot c^\inv & \frac32\alpha b+\frac12 a\boxdot c +\frac12 \gamma b
\\ \frac32\alpha a^\inv+\frac12\beta a^\inv+\frac12 c\boxdot b^\inv & a^\inv \boxdot a &  a^\inv \boxdot b
\\ \frac32\alpha b^\inv+\frac12 c^\inv\boxdot a^\inv +\frac12 \gamma b^\inv & b^\inv \boxdot a &  b^\inv\boxdot b 
\end{pmatrix}
\\=\begin{pmatrix}
\alpha+\alpha^2 & \alpha a+\frac12a & \alpha b+\frac12b \\ \alpha a^\inv +\frac12 a^\inv & \norm{a}^2 &  a^\inv\boxdot b \\ \alpha b^\inv +\frac12 b^\inv & b^\inv\boxdot a & \norm{b}^2
\end{pmatrix}\end{multline*}
where in the last equality we used \eqref{eq:q2=q} 
and Proposition~\ref{P:real octonions}$(i)$.
Hence,
$$\begin{aligned}
\J {\q}{\p_1}{\q}&=2 \q\circ(\q\circ \p_1)-\q\circ \p_1
= \begin{pmatrix}
\alpha^2 & \alpha a & \alpha b\\
\alpha a^\inv & \norm{a}^2 &  a^\inv \boxdot b
\\ \alpha b^\inv & b^\inv \boxdot a &  \norm{b}^2 
\end{pmatrix}
\end{aligned}$$
Recall that this must be a multiple of $\q$. Since $\alpha\ne0$, it is the $\alpha$-multiple. Hence, \begin{equation}\label{eq:multiples}
    \begin{aligned}
    \norm{a}^2&=\alpha\beta,\\
    \norm{b}^2&= \alpha\gamma,\\
     a^\inv\boxdot b&=\alpha c,
    \end{aligned}
\end{equation}
i.e.,
$$\q=
\begin{pmatrix}
    \alpha & a & b \\
    a^\inv &\frac1\alpha \norm{a}^2 & \frac1\alpha a^\inv\boxdot b \\ 
    b^\inv & \frac1\alpha b^\inv \boxdot a & \frac1\alpha \norm{b}^2
    \end{pmatrix}$$
and $\alpha=\alpha^2+\norm{a}^2+\norm{b}^2$ by \eqref{eq:q2=q}.

So, a minimal projection having a nonzero element at the place $11$ must have this form. As explained above, any minimal projection is either of this form or may be obtained from  a projection of this form by applying the automorphism $U_{1,2}$ or $U_{1,3}$ from Lemma~\ref{L:exchanging}. Now the necessity easily follows.
\end{proof}

\begin{proof}[Proof of the sufficiency.] Assume that $\q\in C_6$ has one of the three forms. We will show that it is a minimal projection. Since the first and the second forms could be transformed to the third one by applying  the automorphism $U_{1,3}$ or $U_{1,2}$ from Lemma~\ref{L:exchanging} and such an automorphism preserves minimal projections, we may restrict ourselves to the elements of the third form.

We therefore assume that
$$\q=\begin{pmatrix}
    \alpha & a & b \\
    a^\inv &\frac1\alpha \norm{a}^2 & \frac1\alpha a^\inv\boxdot b \\ 
    b^\inv & \frac1\alpha b^\inv \boxdot a & \frac1\alpha \norm{b}^2
    \end{pmatrix},$$  where $\alpha\in\er\setminus\{0\}$, $a,b\in\O_R$ and $\alpha=\alpha^2+\norm{a}^2+\norm{b}^2$.
    
We first show that $\q$ is a projection. Clearly $\q^*=\q$. To prove that $\q\boxdot\q=\q$ we need to verify equalities \eqref{eq:q2=q}. The first one follows from the very assumption of $\q$.
The second one is
$$\frac1\alpha\norm{a}^2=\norm{a}^2+\frac1{\alpha^2}\norm{a}^4+\frac1{\alpha^2}\norm{a^\inv\boxdot b}^2.$$
Observe that
$$\norm{a^\inv\boxdot b}^2=\ip{a^\inv\boxdot b}{a^\inv\boxdot b}=\ip{a\boxdot(a^\inv \boxdot b)}{b}=\norm{a}^2\norm{b}^2,$$
where we used Proposition~\ref{P:octonions}$(ii)$ and Proposition~\ref{P:real octonions}$(ii)$. Thus the second equality is equivalent to
$$\frac1\alpha\norm{a}^2=\norm{a}^2+\frac1{\alpha^2}\norm{a}^4+\frac1{\alpha^2}\norm{a}^2\norm{b}^2,$$
which is the $\frac{\norm{a}^2}{\alpha^2}$-multiple of the first equality. 

Similarly, the third equality is the $\frac{\norm{b}^2}{\alpha^2}$-multiple of the first one.

Let us look at the fourth equality. It reads
$$a=\alpha a+\frac1{\alpha}\norm{a}^2 a + b\boxdot \left(\frac 1\alpha b^\inv\boxdot a\right).$$
Let us compute the right-hand side:
$$\begin{aligned}
\alpha a+\frac1{\alpha}\norm{a}^2 a + b\boxdot \left(\frac 1\alpha b^\inv\boxdot a\right)&=
\alpha a+\frac1{\alpha}\norm{a}^2 a +\frac1\alpha\norm{b}^2a\\&=
\frac1\alpha(\alpha^2+\norm{a}^2+\norm{b}^2)a=a
\end{aligned}$$
by the first equality.
Similarly,
$$\begin{aligned}
\alpha b+a\boxdot\left(\frac1\alpha a^\inv\boxdot b\right)+\frac1{\alpha}\norm{b}^2 b&=\alpha b+\frac1\alpha \norm{a}^2 b+\frac1{\alpha}\norm{b}^2 b
\\&=\frac1\alpha(\alpha^2+\norm{a}^2+\norm{b}^2)b=b
\end{aligned}$$
and
$$\begin{aligned}
a^\inv\boxdot b +\frac1{\alpha}\norm{a}^2 \cdot \frac1\alpha a^\inv\boxdot b&+
\frac1{\alpha}\norm{b}^2 \cdot \frac1\alpha a^\inv\boxdot b\\&=\frac1{\alpha^2}(\alpha^2+\norm{a}^2+\norm{b}^2)(a^\inv\boxdot b)
\\&= \frac1\alpha a^\inv\boxdot b,
\end{aligned}$$
so the fifth and sixth equalities hold as well, which completes the proof that $\q$ is a projection.

 Next we are going to show that $\q$ is minimal. Clearly $\q\ne0$. Since $C_6$ has rank three, to show that there is a nonzero projection $\r$ with $\r\perp \q$ and $\q+\r\ne\1$. We will distinguish four cases:

Case 1: $a=b=0$. Then
$$\q=\begin{pmatrix}
    1&0&0\\0&0&0\\0&0&0
\end{pmatrix},$$
hence, for example, the choice of
$$\r=\begin{pmatrix}
    0&0&0\\0&1&0\\0&0&0
\end{pmatrix}$$
works.

Case 2: $a\ne0$ and $b=0$. Then the third row and the third column of $\q$ are zero, hence the choice of
$$\r=\begin{pmatrix}
    0&0&0\\0&0&0\\0&0&1
\end{pmatrix}$$
works.

Case 3: $a=0$ and $b\ne0$. Then the second row and the second column of $\q$ are zero, hence the choice of
$$\r=\begin{pmatrix}
    0&0&0\\0&1&0\\0&0&0
\end{pmatrix}$$
works.

Case 4: $a\ne 0$ and $b\ne 0$. Set 
$$\r=\begin{pmatrix}
\frac{\norm{a}^2}{\alpha^2+\norm{a}^2} & -\frac{\alpha a}{\alpha^2+\norm{a}^2} & 0 \\
-\frac{\alpha a^\inv}{\alpha^2+\norm{a}^2} & \frac{\alpha^2}{\alpha^2+\norm{a}^2}&0 \\0&0&0
\end{pmatrix}$$
By the above $\r$ is a projection. Indeed,
$$\left(\frac{\norm{a}^2}{\alpha^2+\norm{a}^2}\right)^2+ \frac{\alpha^2 a\boxdot a^\inv}{(\alpha^2+\norm{a}^2)^2}=\frac{\norm{a}^2(\alpha^2+\norm{a}^2)}{(\alpha^2+\norm{a}^2)^2}=\frac{\norm{a}^2}{\alpha^2+\norm{a}^2}
$$
and the second row is made from the first row by muliplying by
$$\frac{\alpha^2+\norm{a}^2}{\norm{a}^2}\cdot\left(-\frac{\alpha a^\inv}{\alpha^2+\norm{a}^2}\right)=-\frac{\alpha a^\inv}{\norm{a}^2}$$
from the left.

Moreover, $\q\perp \r$. Indeed,
$$\r\boxdot \q=\left(\begin{smallmatrix}
\frac{\alpha\norm{a}^2}{\alpha^2+\norm{a}^2}-
\frac{\alpha a\boxdot a^\inv}{\alpha^2+\norm{a}^2} & 
\frac{\norm{a}^2a}{\alpha^2+\norm{a}^2}-
\frac{\alpha a}{\alpha^2+\norm{a}^2}\cdot \frac{a^\inv\boxdot a}\alpha &
\frac{\norm{a}^2b}{\alpha^2+\norm{a}^2}-
\frac{\alpha a}{\alpha^2+\norm{a}^2}\boxdot \frac{a^\inv\boxdot b}\alpha
\\
-\frac{\alpha^2 a^\inv}{\alpha^2+\norm{a}^2} + \frac{\alpha^2 a^\inv}{\alpha^2+\norm{a}^2} & -\frac{\alpha a^\inv\boxdot a}{\alpha^2+\norm{a}^2} + \frac{\alpha^2}{\alpha^2+\norm{a}^2}\cdot\frac1\alpha a^\inv\boxdot a &  -\frac{\alpha a^\inv\boxdot b}{\alpha^2+\norm{a}^2} + \frac{\alpha^2}{\alpha^2+\norm{a}^2}\cdot\frac1\alpha a^\inv\boxdot b
\\ 0 & 0 &0
\end{smallmatrix}\right),$$
which equals $0$. Thus also $\q\boxdot \r=0$, hence $\q\circ \r=0$, i.e., $\r\perp \q$.
Finally, clearly $\q+\r\ne\1$ (as $b\ne0$).

This completes the proof.
\end{proof}

\section{Proofs of the results on unitaries in $C_6$}\label{sec:proofs}

This section is devoted to the synthesis of the results from previous sections which leads to proving Theorem~\ref{T:product} and Theorem~\ref{T:simultaneous biq}. We start with the second theorem.

\begin{proof}[Proof of Theorem~\ref{T:simultaneous biq}.]
Fix two unitary elements $\uu,\e\in C_6$. We will describe how to find a Jordan $*$-automorphism of $C_6$ with the required properties.  Note that the composition of Jordan $*$-automorphisms is again a Jordan $*$-automorphism. So, we will do the proof in several steps.

At first observe that by Theorem~\ref{T:spectral decomposition of unitary}$(iii)$ we may  assume, without loss of generality, that $\e$ is a diagonal matrix.

Next we apply Theorem~\ref{T:spectral decomposition of unitary}$(i)$ to $\uu$ and find complex units $\alpha_1,\alpha_2,\alpha_3$ and minimal projections $\q_1,\q_2,\q_3\in C_6$ such that $\uu=\alpha_1\q_1+\alpha_2\q_2+\alpha_3\q_3$. Since $\q_1+\q_2+\q_3=\1$, we may assume (up to relabelling the projections) that $\q_1$ has a nonzero element at the place $11$. Hence, by Theorem~\ref{T:minimal projections} we have
$$\q_1=\begin{pmatrix}
    \alpha & a & b \\
    a^\inv &\frac1\alpha \norm{a}^2 & \frac1\alpha a^\inv\boxdot b \\ 
    b^\inv & \frac1\alpha b^\inv \boxdot a & \frac1\alpha \norm{b}^2
    \end{pmatrix},$$  where $\alpha\in\er\setminus\{0\}$, $a,b\in\O_R$ and $\alpha=\alpha^2+\norm{a}^2+\norm{b}^2$.

By Proposition~\ref{P:OR-transform}$(a)$ there is an asymmetric triple isomorphism $S_1$ of $\O_R$ such that $S_1(a)\in\er$. It can be extended to a hermitian asymmetric triple isomorphism of $\O$, let us denote it still by $S_1$. Up to applying the automorphism $\widetilde{S_1}$ provided by Proposition~\ref{P:automorphisms-C6} we may now assume that $a\in\er$ (note that such an automorphism preserves diagonal matrices).

By Proposition~\ref{P:OR-transform}$(b)$ there is an automorphism $S_2$ of $\O_R$ such that $S_2(b)\in\span\{e_0,e_1\}$. It can be extended to an automorphism of $\O$, let us denote it still by $S_2$. Note that $S_2(1)=1$ and $S_2$ preserves $\er$. Hence,  up to applying the automorphism $\widetilde{S_2}$ provided by Proposition~\ref{P:automorphisms-C6} we may now assume that $b\in\span\{e_0,e_1\}$.

Summarizing, up to now we have reduced the general situation to the case when $\e$ is a diagonal matrix and the entries of $\q_1$ are in $\span\{e_0,e_1\}$.

We continue by proving that we may further assume without loss of generality that the entries of $\q_2$ are quaternions. By Theorem~\ref{T:minimal projections} we may distinguish three cases:

Case 1: $\q_2=\begin{pmatrix}
    0&0&0\\0&0&0\\0&0&1
\end{pmatrix}$. Then no more transformation is needed as the entries of $\q_2$ are real numbers.

Case 2: $\q_2=\begin{pmatrix}
0 & 0 & 0\\0&\xi&x\\0&x^\inv&\frac1\xi\norm{x}^2
\end{pmatrix}$, where $\xi\in\er\setminus\{0\}$, $x\in\O_R$ and $\xi=\xi^2+\norm{x}^2$.

By Proposition~\ref{P:OR-transform}$(c)$ there is an automorphism $U$ of $\O_R$ such that $U(e_1)=e_1$ and $U(x)\in\span\{e_0,e_1,e_2\}$. Then $U$ may be extended to an automorphism of $\O$, we will denote it still by $U$. Now take the automorphism $\widetilde{U}_3$ provided by Corollary~\ref{cor:C6-automorphisms}. This automorphism preserves diagonal matrices and also $\q_1$. So, we may  assume without loss of generality that
the entries of $\q_2$ are in $\span\{e_0,e_1,e_2\}$, in particular, they are quaternions.

Case 3: $\q_2=\begin{pmatrix}
    \xi & x & y \\
    x^\inv &\frac1\xi \norm{x}^2 & \frac1\xi x^\inv\boxdot y \\ 
    y^\inv & \frac1\xi y^\inv \boxdot x & \frac1\xi \norm{y}^2
    \end{pmatrix}$,  where $\xi\in\er\setminus\{0\}$, $x,y\in\O_R$ and $\xi=\xi^2+\norm{x}^2+\norm{y}^2$.

        Recall that $\q_3=\1-\q_1-\q_2$ is a minimal projection.
  We have
$$\q_3=\1-\q_1-\q_2=\begin{pmatrix}
    1-\alpha-\xi & -a-x & -b-y \\
    -a^\inv-x^\inv &1-\frac1\alpha\norm{a}^2-\frac1\xi \norm{x}^2 &-\frac1\alpha a^\inv\boxdot b- \frac1\xi x^\inv\boxdot y \\ 
    -b^\inv-y^\inv & -\frac1\alpha b^\inv\boxdot a-\frac1\xi y^\inv \boxdot x & 1-\frac1\alpha\norm{b}^2-\frac1\xi \norm{y}^2
    \end{pmatrix}.$$
We distinguish two subcases.

Subcase 3.1: $\alpha+\xi=1$. Then the first row and the first column of $\q_3$ are zero, in particular $x=-a$ and $y=-b$. Thus $x\in\er$ and $y\in\span\{e_0,e_1\}$, so the entries of $\q_2$ are in $\span\{e_0,e_1\}$.

Subcase 3.2: $\alpha+\xi<1$. Then we deduce from Theorem~\ref{T:minimal projections} that the following equalities hold (we use that $a\in\er$):
$$\begin{aligned}
1-\alpha-\xi&=(1-\alpha-\xi)^2+\norm{a+x}^2+\norm{b+y}^2,\\
1-\frac1\alpha\norm{a}^2-\frac1\xi \norm{x}^2&=\frac{1}{1-\alpha-\xi}\norm{a+x}^2,\\
-\frac1\alpha a b- \frac1\xi x^\inv\boxdot y&=\frac{1}{1-\alpha-\xi}(a+x^\inv)\boxdot (b+y),\\
1-\frac1\alpha\norm{b}^2-\frac1\xi \norm{y}^2&=\frac{1}{1-\alpha-\xi}\norm{b+y}^2.
\end{aligned}$$

Let us look at the third equality. By standard algebraic manipulation we get
$$\begin{aligned}
-(\tfrac1\alpha-1-\tfrac\xi\alpha)a b -(\tfrac1\xi-\tfrac\alpha\xi-1) x^\inv \boxdot y&=a b+x^\inv\boxdot b+a y +x^\inv\boxdot y,\\
-\tfrac{1-\xi}\alpha a b -\tfrac{1-\alpha}\xi x^\inv \boxdot y&=x^\inv\boxdot b+a y,\\
-a(\frac{1-\xi}{\alpha}b+y)&=x^\inv\boxdot(b+\frac{1-\alpha}{\xi}y).
\end{aligned}$$

Now there are two possibilities:

Sub-subcase 3.2.1: $b+\frac{1-\alpha}\xi y=0$. Then $y=-\frac{\xi}{1-\alpha}b$ (note that $\alpha<1$), hence $y\in\span\{e_0,e_1\}$. By Proposition~\ref{P:OR-transform}$(c)$ there is an automorphism $U$ of $\O_R$ preserving $e_1$ such that $U(x)\in\span\{e_0,e_1,e_2\}$. Then $U$ may be extended to an automorphism of $\O$, denoted still by $U$. Up to applying the automorphism $\widetilde{U}$ provided by Proposition~\ref{P:automorphisms-C6} we may assume that $x\in\span\{e_0,e_1,e_2\}$, thus the entries of $\q_2$ are quaternions.

Sub-subcase 3.2.2: $b+\frac{1-\alpha}\xi y\ne 0$. By Proposition~\ref{P:real octonions} we then deduce that
$$x^\inv=-\frac{1}{\norm{b+\frac{1-\alpha}\xi y}^2}a (\frac{1-\xi}{\alpha}b+y)\boxdot (b+\frac{1-\alpha}\xi y)^\inv.$$
By Proposition~\ref{P:OR-transform}$(c)$ there is an automorphism $U$ of $\O_R$ preserving $e_1$ such that $U(y)\in\span\{e_0,e_1,e_2\}$. Then $U$ may be extended to an automorphism of $\O$, denoted still by $U$. Up to applying the automorphism $\widetilde{U}$ provided by Proposition~\ref{P:automorphisms-C6} we may assume that $y\in\span\{e_0,e_1,e_2\}$. By the above formula we deduce that $x^\inv$ (and hence $x$) is a quaternion. Thus the entries of $\q_2$ are quaternions.

Summarizing, we have proved that, up to applying a Jordan $*$-automorphism of $C_6$ preserving diagonal matrices, we may assume that the entries of $\q_1$ and $\q_2$ are quaternions. Thus the entries of $\q_3=\1-\q_1-\q_2$ are quaternions as well. Since $\uu$ is a linear combination of $\q_1,\q_2,\q_3$, its entries must be biquaternions.
This completes the proof.
\end{proof}

We continue by establishing connections between the mappings $\dt$ and $\dt_3$.

\begin{lemma}\label{L:rank two}
Let $\q\in H_3(\Ha_C)$ be a minimal projection. Then $\widehat{\q}$ is a matrix of rank two.
\end{lemma}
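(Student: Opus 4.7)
The plan is to show that $\widehat{\q}\in M_6(\mathbb{C})$ is an idempotent of rank exactly $2$. The hat map is a $*$-isomorphism of associative algebras $M_3(\Ha_C)\to M_6(\mathbb{C})$, and $\q\circ\q=\q$ is equivalent to $\q\boxdot\q=\q$, so $\widehat{\q}^2=\widehat{\q}$. Hence the rank of $\widehat{\q}$ equals the algebraic multiplicity of the eigenvalue $1$; by Theorem~\ref{T:determinant}(ix) this multiplicity is even, and since $\q\ne 0$ it lies in $\{2,4,6\}$.

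To pin the rank down, I would first check that $H_3(\Ha_C)$ has JB$^*$-rank equal to $3$. The lower bound comes from the three canonical projections $E_{11},E_{22},E_{33}$: each is minimal, since a direct computation of the condition $E_{ii}\circ\x=\x$ in $H_3(\Ha_C)$ forces $\x\in\mathbb{C}E_{ii}$ (the diagonal entry being a scalar and all other entries vanishing). For the upper bound, any orthogonal family $\p_1,\dots,\p_k$ of nonzero projections in $H_3(\Ha_C)$ satisfies $\widehat{\p_i}\widehat{\p_j}=0$ for $i\ne j$ (since $\p_i\circ\p_j=0$ in the JC$^*$-setting implies $\p_i\boxdot\p_j=0$), so the $\widehat{\p_i}$ are mutually orthogonal idempotents in $M_6$, each of even positive rank $\ge 2$ by the previous paragraph; thus $2k\le 6$.

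Finally, I would extend $\q$ to a complete orthogonal family of minimal projections. By a standard inductive argument (any nonzero projection in a finite-dimensional JB$^*$-algebra dominates a minimal projection, and minimal projections living inside the Peirce-$0$ subalgebra of a partial sum remain minimal in $H_3(\Ha_C)$), the projection $\q$ sits in some family $\q=\q_1,\q_2,\dots,\q_m$ of mutually orthogonal minimal projections with $\q_1+\cdots+\q_m=\1$. The rank-$3$ bound forces $m\le 3$, and combined with $\sum_i\mathrm{rank}(\widehat{\q_i})=\mathrm{rank}(I_6)=6$ and each summand being an even integer $\ge 2$, the only possibility is $m=3$ with every summand equal to $2$. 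In particular $\mathrm{rank}(\widehat{\q})=2$. The main technical point is this last extension step, which rests on standard Peirce calculus but is not made explicit earlier in the paper.
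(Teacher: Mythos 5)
Your reduction to showing that the self-adjoint idempotent $\widehat{\q}$ has rank exactly $2$ is fine, as is the observation via Theorem~\ref{T:determinant}$(ix)$ that this rank is an even number in $\{2,4,6\}$, and the bound $k\le 3$ on orthogonal families of nonzero projections in $H_3(\Ha_C)$. The gap is in the final counting step. After extending $\q=\q_1$ to an orthogonal family $\q_1,\dots,\q_m$ of minimal projections with $\q_1+\dots+\q_m=\1$, you claim that $m\le 3$, $\sum_i\operatorname{rank}(\widehat{\q_i})=6$ and ``each summand an even integer $\ge 2$'' force $m=3$ with all summands equal to $2$. They do not: the configuration $m=2$ with ranks $(4,2)$ --- that is, $\widehat{\q}$ of rank $4$ and $\1-\q$ a projection of rank $2$ (which is automatically minimal by your own even-rank argument, so your extension procedure would indeed stop at $m=2$) --- satisfies every constraint you have established. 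Nothing in your argument excludes a minimal projection whose hat has rank $4$; ruling that out is precisely the substance of the lemma. What is missing is a proof that a projection $\p\in H_3(\Ha_C)$ with $\operatorname{rank}(\widehat{\p})\ge 4$ dominates two nonzero orthogonal projections (equivalently, that its Peirce-$2$ space is larger than $\ce\p$), and this is not a routine consequence of the Peirce calculus you invoke.

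The paper closes the question by a completely different and essentially computational route: it uses the explicit classification of minimal projections (Theorem~\ref{T:minimal projections}), under which a minimal projection is either a diagonal matrix unit or has each row equal to a fixed left multiple of one distinguished row; passing to $\widehat{\q}\in M_6$ this exhibits all six rows as linear combinations of two independent ones, so the rank is $2$ by inspection. If you wish to keep your more abstract scheme, the quickest repair still passes through that classification: for a minimal projection the sum of the (complex, in fact real) diagonal entries equals $\frac1\alpha(\alpha^2+\norm{a}^2+\norm{b}^2)=1$, whence $\operatorname{tr}(\widehat{\q})=2$ and, since $\widehat{\q}$ is an idempotent, $\operatorname{rank}(\widehat{\q})=2$. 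Without some such input, your argument proves only that the rank lies in $\{2,4\}$.
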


\begin{proof}
The projection $\q$ is clearly minimal also in $C_6$, so it has the form from Theorem~\ref{T:minimal projections} and its entries are quaternions. Let us distinguish the three cases:

Case 1: If $\q=\begin{pmatrix}
0 & 0 & 0\\0&0&0\\0&0&1
\end{pmatrix}$, then $\widehat{\q}$ has clearly rank two.

Case 2: Assume 
$$\q=\begin{pmatrix}
0 & 0 & 0\\0&\alpha&a\\0&a^\inv&\frac1\alpha\norm{a}^2
\end{pmatrix},$$
where $\alpha\in\er\setminus\{0\}$, $a\in\Ha$, and $\alpha=\alpha^2+\norm{a}^2$.
The third row is obtained by multiplying the second one by $\frac1\alpha a^\inv$ from the left. Hence, the fifth and sixth rows of $\widehat{\q}$ are linear combinations of the third and fourth rows.
Since the third and fourth rows are linearly independent (as $\alpha\ne0$), $\widehat{\q}$ is of rank two.

Case 3: Assume
$$\q=
  \begin{pmatrix}
    \alpha & a & b \\
    a^\inv &\frac1\alpha \norm{a}^2 & \frac1\alpha a^\inv\boxdot b \\ 
    b^\inv & \frac1\alpha b^\inv \boxdot a & \frac1\alpha \norm{b}^2
    \end{pmatrix},$$
  where $\alpha\in\er\setminus\{0\}$, $a,b\in\Ha$, $\alpha=\alpha^2+\norm{a}^2+\norm{b}^2$.
    Then the second row is obtained by multiplying the first one by $\frac1\alpha a^\inv$ from the left and the third row is obtained by multiplying the first one by $\frac1\alpha b^\inv$ from the left. So, each row of $\widehat{\q}$ is a linear combination of the first two.
\end{proof}

\begin{lemma}\label{L:dt=dt3}
Let $\uu\in H_3(\Ha_C)$ be a unitary element. Then $\dt\uu=\dt_3\uu$.
\end{lemma}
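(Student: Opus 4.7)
The plan is to identify $\dt_3 \uu$ via the eigenvalue characterization in Theorem~\ref{T:determinant}$(ix)$, and match it to the spectral formula defining $\dt \uu$. First I would invoke Theorem~\ref{T:spectral decomposition of unitary}$(i)$ to write
$$\uu=\alpha_1\p_1+\alpha_2\p_2+\alpha_3\p_3,$$
where $\alpha_1,\alpha_2,\alpha_3$ are complex units and $\p_1,\p_2,\p_3$ are mutually orthogonal minimal projections of $C_6$ with sum $\1$, so that by definition $\dt\uu=\alpha_1\alpha_2\alpha_3$.

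The first key point is that each $\p_j$ actually belongs to $H_3(\Ha_C)$. Indeed, the spectral projections are obtained (as in the proof of Proposition~\ref{P:spectral}$(b)$) inside the finite-dimensional commutative C$^*$-algebra generated by $\uu$ in $C_6$, which is itself contained in the unital Jordan $*$-subalgebra generated by $\uu$ in $H_3(\Ha_C)$. Being a nonzero projection of $H_3(\Ha_C)$ which is already minimal in the larger $C_6$, each $\p_j$ is a minimal projection of $H_3(\Ha_C)$, and Lemma~\ref{L:rank two} then applies: $\widehat{\p_j}$ is a rank-two projection of $M_6$ (the identity $\p_j\circ\p_j=\p_j$ in the Jordan sense is equivalent to $\p_j\boxdot\p_j=\p_j$ since $\Ha_C$ is associative, so the $*$-isomorphism $M_3(\Ha_C)\cong M_6$ carries $\p_j$ to an honest projection).

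Next I would transfer the orthogonality: if $i\ne j$ then $\p_i\circ\p_j=0$ gives $\p_i\boxdot\p_j+\p_j\boxdot\p_i=0$; multiplying on the left by $\p_i$ and taking the $C^*$-involution on $M_3(\Ha_C)$ shows in the standard way that $\p_i\boxdot\p_j=0$. Hence $\widehat{\p_1},\widehat{\p_2},\widehat{\p_3}$ are mutually orthogonal rank-two projections in $M_6$ summing to $\widehat{\1}$, which is the $6\times 6$ identity. Therefore
$$\widehat{\uu}=\alpha_1\widehat{\p_1}+\alpha_2\widehat{\p_2}+\alpha_3\widehat{\p_3}$$
is a spectral decomposition of $\widehat{\uu}$ into orthogonal rank-two spectral subspaces, so the eigenvalues of $\widehat{\uu}$ are $\alpha_1,\alpha_2,\alpha_3$ each counted with multiplicity exactly $2$ (values being equal simply add their multiplicities).

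Applying Theorem~\ref{T:determinant}$(ix)$, $\dt_3 \uu$ equals the product of the eigenvalues of $\widehat{\uu}$ each with half multiplicity, which is precisely $\alpha_1\alpha_2\alpha_3=\dt\uu$. The only mildly delicate step is justifying that the spectral projections $\p_j$ live in $H_3(\Ha_C)$ rather than merely in $C_6$; this is handled by the associative/commutative functional calculus inside the Jordan $*$-subalgebra generated by $\uu$, and everything else reduces to standard linear algebra in $M_6$.
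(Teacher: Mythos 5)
Your proof follows essentially the same route as the paper's: decompose $\uu$ into minimal projections via Theorem~\ref{T:spectral decomposition of unitary}$(i)$, observe that these projections lie in $H_3(\Ha_C)$ so that Lemma~\ref{L:rank two} makes each $\widehat{\p_j}$ a rank-two projection of $M_6$, and apply Theorem~\ref{T:determinant}$(ix)$ to identify $\dt_3\uu$ with $\alpha_1\alpha_2\alpha_3$. The extra details you supply (that Jordan projections of $H_3(\Ha_C)$ are honest projections of $M_6$ by associativity, and the transfer of orthogonality) are correct and worth making explicit.

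One step of your justification is incomplete. You argue that $\p_1,\p_2,\p_3\in H_3(\Ha_C)$ because the spectral projections are obtained inside the commutative C$^*$-algebra generated by $\uu$. This identifies the $\p_j$ with the spectral projections of $\uu$, which is only valid when $\alpha_1,\alpha_2,\alpha_3$ are pairwise distinct. If, say, $\alpha_1=\alpha_2\ne\alpha_3$, that commutative algebra contains only $\p_1+\p_2$ and $\p_3$; splitting $\p_1+\p_2$ into two minimal projections is an additional choice made in $C_6$, and your argument does not show it can be made inside $H_3(\Ha_C)$. This is easily repaired. Since $\dt\uu$ does not depend on the particular choice of decomposition (Theorem~\ref{T:spectral decomposition of unitary}$(ii)$), one may decompose each spectral projection into minimal projections of the finite-dimensional JB$^*$-algebra $H_3(\Ha_C)$; a rank count (both $H_3(\Ha_C)$ and $C_6$ have rank three) shows these are automatically minimal in $C_6$. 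Alternatively, in the degenerate case one can work directly with the spectral projections: $\widehat{\p_3}$ has rank two by Lemma~\ref{L:rank two}, hence $\widehat{\p_1}+\widehat{\p_2}=\widehat{\1}-\widehat{\p_3}$ has rank four, and assertion $(ix)$ still yields $\dt_3\uu=\alpha_1^2\alpha_3=\dt\uu$. With this point settled, your argument is complete and coincides with the paper's.
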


\begin{proof}
It follows from Theorem~\ref{T:spectral decomposition of unitary}$(i)$ that there are complex units $\alpha_1,\alpha_2,\alpha_3$ and minimal projections $\p_1,\p_2,\p_3\in C_6$ such that $\uu=\alpha_1\p_1+\alpha_2\p_2+\alpha_3\p_2$. It follows from the proof of Theorem~\ref{T:spectral decomposition of unitary}$(i)$ that in this case $\p_1,\p_2,\p_3\in H_3(\Ha_C)$. Clearly the expression 
$$\widehat{\uu}=\alpha_1\widehat{\p_1}+\alpha_2\widehat{\p_2}+\alpha_3\widehat{\p_3}$$ coincides with the spectral decomposition of $\widehat{\uu}$ in $M_6$.

By Lemma~\ref{L:rank two} projections $\widehat{\p_j}$ are of rank two. Now it easily follows that
$$\dt \uu=\alpha_1\alpha_2\alpha_3=\dt_3\uu,$$
where the first equality follows from the very definition of $\dt$ and the second one follows from Theorem~\ref{T:determinant}$(ix)$.
\end{proof}

\begin{lemma}\label{L:dte=dt3e}
Let $\e,\uu\in H_3(\Ha_C)$ be unitary elements. Then $\dt_{\e}\uu=\dt_{3,\e}\uu$.
\end{lemma}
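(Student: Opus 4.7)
The idea is to transport the spectral decomposition defining $\dt_{\e}\uu$ through the isomorphism that also defines $\dt_{3,\e}\uu$, and then invoke Lemma~\ref{L:dt=dt3}.

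Let $T:M_3(\Ha_C)\to M_3(\Ha_C)$ be the map provided by Lemma~\ref{L:isomorphism e-1}. So $T$ is a $*$-isomorphism from $M_3(\Ha_C)$ with the $\e$-structure $(\cdot_\e,{}^{*_\e})$ onto $M_3(\Ha_C)$ with its standard C$^*$-structure, and $T$ preserves $H_3(\Ha_C)$. Since $T$ is a C$^*$-isomorphism, it carries the Jordan product $\circ_\e$ to the standard Jordan product $\circ$ and the involution $^{*_\e}$ to $^*$; moreover $T(\e)=\1$ because unital C$^*$-isomorphisms preserve units (alternatively one checks this directly from the explicit formula $T(\x)=\vv^*\x\vv^*$, using that $\vv$ is unitary in $M_3(\Ha_C)$). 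Restricting, $T:(H_3(\Ha_C),\circ_\e,{}^{*_\e})\to(H_3(\Ha_C),\circ,{}^*)$ is a unital Jordan $*$-isomorphism.

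Next, apply the spectral decomposition of Theorem~\ref{T:spectral decomposition of unitary} to the unitary $\uu$ in the JB$^*$-algebra $(H_3(\Ha_C),\circ_\e,{}^{*_\e})$ with unit $\e$: write
$$\uu=\alpha_1\vv_1+\alpha_2\vv_2+\alpha_3\vv_3,$$
where $\alpha_1,\alpha_2,\alpha_3$ are complex units and $\vv_1,\vv_2,\vv_3$ are mutually orthogonal minimal projections in this $\e$-structure with $\vv_1+\vv_2+\vv_3=\e$. By the very definition of $\dt_{\e}$, we have $\dt_{\e}\uu=\alpha_1\alpha_2\alpha_3$. Applying $T$, and using that Jordan $*$-isomorphisms map mutually orthogonal minimal projections to mutually orthogonal minimal projections, we obtain
$$T\uu=\alpha_1 T\vv_1+\alpha_2 T\vv_2+\alpha_3 T\vv_3,$$
with $T\vv_1,T\vv_2,T\vv_3$ mutually orthogonal minimal projections in the standard JB$^*$-algebra $H_3(\Ha_C)$ and summing to $T(\e)=\1$. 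This is precisely the spectral decomposition of the unitary element $T\uu\in H_3(\Ha_C)$ in the standard structure, so by definition $\dt(T\uu)=\alpha_1\alpha_2\alpha_3=\dt_{\e}\uu$.

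Finally, since $T\uu\in H_3(\Ha_C)$ is unitary in the standard sense, Lemma~\ref{L:dt=dt3} yields $\dt(T\uu)=\dt_3(T\uu)$, and by the definition of $\dt_{3,\e}$ from Proposition~\ref{P:product-dtn} we have $\dt_{3,\e}\uu=\dt_3(T\uu)$. Chaining these equalities gives
$$\dt_{3,\e}\uu=\dt_3(T\uu)=\dt(T\uu)=\dt_{\e}\uu,$$
as required. The only delicate point is verifying that $T$ really is a Jordan $*$-isomorphism between the two JB$^*$-algebra structures on $H_3(\Ha_C)$ sending $\e$ to $\1$; this is immediate from Lemma~\ref{L:isomorphism e-1} once one notes that a C$^*$-isomorphism automatically preserves the derived Jordan product and that it must send unit to unit.
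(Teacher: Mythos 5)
Your proof is correct and follows essentially the same route as the paper's: the paper also takes $\vv$ with $\vv^2=\e$, uses the map $\x\mapsto\J{\vv^*}{\x^*}{\vv^*}=\vv^*\x\vv^*$ (the same $T$ as in Lemma~\ref{L:isomorphism e-1}) to carry the spectral decomposition of $\uu$ relative to $\e$ onto a spectral decomposition of $T\uu$ relative to $\1$, and then concludes via Lemma~\ref{L:dt=dt3} and the definition in Proposition~\ref{P:product-dtn}. The only cosmetic difference is that the paper phrases $T$ as a triple automorphism of $C_6$ restricting to $H_3(\Ha_C)$ rather than as a unital Jordan $*$-isomorphism between the two algebra structures, which amounts to the same transport of the decomposition.
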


\begin{proof}
We have $\uu=\alpha_1\e_1+\alpha_2\e_2+\alpha_3\e_3$, where $\alpha_1,\alpha_2,\alpha_3$ are complex units and $\e_1,\e_2,\e_3$ are pairwise orthogonal minimal tripotents in $C_6$ satisfying $\e_j\le \e$ for each $j$. By the very definition we have
$$\dt_{\e}\uu=\alpha_1\alpha_2\alpha_3.$$
By Lemma~\ref{L:sqrt} there is a unitary element $\vv\in H_3(\Ha_C)$ such that $\vv^2=\e$. Then
$$T(\x)=\J{\vv^*}{\x^*}{\vv^*},\quad \x\in C_6$$
is a triple automorphism of $C_6$ (by Lemma~\ref{L:shift}) which preserves $H_3(\Ha_C)$ and satisfies $T(\e)=\1$.
Then
 $$T\uu=\alpha_1T\e_1+\alpha_2T\e_2+\alpha_3T\e_3,$$
and $T(\e_j)\le T(\e)=\1$, so $T\e_j$ are (parwise orthogonal minimal) projections in $C_6$. Thus
$$\dt_{\e}\uu=\alpha_1\alpha_2\alpha_3=\dt T\uu =\dt_3 T\uu=\dt_{3,\e}\uu,$$
where the third equality follows from Lemma~\ref{L:dt=dt3}, while the fourth one is a consequence of the definition in Proposition \ref{P:product-dtn}.
\end{proof}     

Now we are ready to prove the remaining theorem:

\begin{proof}[Proof of Theorem~\ref{T:product}.]
Let $\e,\uu\in C_6$ be unitary elements. By Theorem~\ref{T:simultaneous biq} there is a Jordan $*$-automorphism $T$ of $C_6$ such that both $T(\e)$ and $T(\uu)$ belong to $H_3(\Ha_C)$. Then
$$\begin{aligned} \dt \uu& =\dt T(\uu)=\dt_3 T(\uu)= \dt_{3,T(\e)} T(\uu)\cdot \dt_3 T(\e)\\
&= \dt_{T(\e)} T(\uu)\cdot \dt T(\e)=\dt_{\e}\uu\cdot \dt\e.
\end{aligned}$$
The first equality is clear, the second one follows from Lemma~\ref{L:dt=dt3}. The third equality follows from Proposition~\ref{P:product-dtn}$(ii)$,
the fourth one follows from Lemma~\ref{L:dt=dt3} and Lemma~\ref{L:dte=dt3e}. The last equality easily follows from definitions.

This completes the proof.
\end{proof}

\section{Determinants of general elements of $C_6$}\label{sec:dt in C6}

We have introduced determinants of unitary elements in $C_6$ (see Section~\ref{sec:unitaries}) and also of  general $^\inv$-hermitian matrices of biquaternions (in Section~\ref{sec:dt-n}). These two notions of determinant coincide provided both are defined (by Lemmata~\ref{L:dt=dt3} and~\ref{L:dte=dt3e}). In this section we extend the definition of determinant to general elements of $C_6$. 

To this end we first observe that some of the results on unitaries hold also for self-adjoint elements (after some natural modifications).

The following lemma is an analogue of  Theorem~\ref{T:spectral decomposition of unitary}. Its proof is essentially the same, we only should use assertion $(c)$ from Proposition~\ref{P:spectral} instead of assertion $(b)$.

\begin{lemma}\label{L:spectral decomposition of s-a}
  Let $\a\in C_6$ be a self-adjoint element. 
  \begin{enumerate}[$(i)$]
      \item There are real numbers $\alpha_1,\alpha_2,\alpha_3$ and mutually orthogonal minimal projections $\p_1,\p_2,\p_3$ 
such that $\a=\alpha_1\p_1+\alpha_2\p_2+\alpha_3\p_3$.
\item The representation from $(i)$ is unique in the natural sense: The triple $(\alpha_1,\alpha_2,\alpha_3)$ is uniquely determined up to reordering. Further, for any real number $\alpha$ the sum $\sum_{\alpha_j=\alpha}\p_j$ is also uniquely determined.
\item There is a Jordan $*$-automorphism $T$ of $C_6$ such that $T(\a)$ is a diagonal matrix (with real entries).
  \end{enumerate}
\end{lemma}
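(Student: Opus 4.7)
As indicated in the paragraph preceding the statement, we follow the scheme of the proof of Theorem~\ref{T:spectral decomposition of unitary}, replacing Proposition~\ref{P:spectral}$(b)$ by Proposition~\ref{P:spectral}$(c)$.

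For assertion $(i)$, Proposition~\ref{P:spectral}$(c)$ applied to the finite-dimensional JB$^*$-algebra $C_6$ gives a representation
\[
\a=\sum_{j=1}^{n}\alpha_j\p_j,
\]
where $\alpha_1,\dots,\alpha_n$ are distinct real numbers and $\p_1,\dots,\p_n$ are mutually orthogonal nonzero projections with $\sum_{j=1}^n\p_j=\1$. Since $C_6$ has rank $3$, any orthogonal family of nonzero projections has at most three members, so $n\le 3$. If some $\p_j$ is not minimal, then, being a projection in $C_6$, its rank is $2$ or $3$ and it can be written as a sum of mutually orthogonal minimal projections (use that $C_6$ is of rank $3$ and that every projection is a sum of minimal orthogonal projections below it). Splitting each non-minimal $\p_j$ in this way and repeating the corresponding $\alpha_j$ in the sum gives the desired representation with exactly three orthogonal minimal projections (possibly with repeated coefficients).

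For assertion $(ii)$, observe that if $\a=\beta_1\q_1+\beta_2\q_2+\beta_3\q_3$ is another representation of the required form, then grouping equal $\beta_j$'s and summing the corresponding minimal projections produces a representation of the type given by Proposition~\ref{P:spectral}$(c)$ (with distinct real coefficients and orthogonal projections summing to $\1$). The uniqueness part of Proposition~\ref{P:spectral}$(c)$ then forces the multiset $\{\alpha_1,\alpha_2,\alpha_3\}$ to coincide with $\{\beta_1,\beta_2,\beta_3\}$ and, for each real value $\alpha$, forces $\sum_{\alpha_j=\alpha}\p_j=\sum_{\beta_j=\alpha}\q_j$.

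For assertion $(iii)$, let $\p_1,\p_2,\p_3$ be the minimal orthogonal projections from $(i)$ and let $\e_1,\e_2,\e_3$ be the three canonical diagonal minimal projections in $C_6$. By Lemma~\ref{L:frame-automorphism}$(b)$ there is a triple automorphism $T:C_6\to C_6$ with $T(\p_j)=\e_j$ for $j=1,2,3$; since both triples consist of projections, Lemma~\ref{L:frame-automorphism}$(c)$ implies that $T$ is a Jordan $*$-automorphism. Then
\[
T(\a)=\alpha_1\e_1+\alpha_2\e_2+\alpha_3\e_3
\]
is a diagonal matrix with real entries, as required. The only tiny point worth checking is that the decomposition into three (not necessarily distinct) minimal orthogonal projections is genuinely available even when $\a$ has a repeated spectral value, but this is exactly what the splitting of non-minimal $\p_j$'s in $(i)$ ensures, so there is no real obstacle here.
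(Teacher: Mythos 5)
Your proof is correct and follows exactly the route the paper intends: the paper gives no separate argument for this lemma, stating only that the proof of Theorem~\ref{T:spectral decomposition of unitary} carries over with Proposition~\ref{P:spectral}$(c)$ in place of $(b)$, which is precisely the adaptation you carry out (rank-three bound, splitting of non-minimal projections, uniqueness via the spectral decomposition, and Lemma~\ref{L:frame-automorphism} for the diagonalization).
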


The previous lemma enables us to define determinant, $\dt \a,$ of a self-adjoint element, $\a,$ of $C_6$ as the product $\alpha_1\alpha_2\alpha_3$. This is compatible with the general definition below.

There is a natural common roof of unitary and self-adjoint elements -- an element $\x\in C_6$ is said to be \emph{normal} if it is a linear combination of mutually orthogonal (minimal projections) or, equivalently, if the Jordan $*$-subalgebra generated by $\x$ and $\x^*$ is associative.

The next lemma is an analogue of Theorem~\ref{T:simultaneous biq} and the same proof works.

\begin{lemma}\label{L:simultaneous quat}
Let $\a,\b\in C_6$ be two normal elements. Then there is a Jordan $*$-automorphism $T:C_6\to C_6$ such that
\begin{enumerate}[$(i)$]
    \item $T(\a)$ is a diagonal matrix;
    \item The entries of $T(\b)$ are biquaternions.
\end{enumerate}
\end{lemma}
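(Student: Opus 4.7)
The plan is to follow the proof of Theorem~\ref{T:simultaneous biq} verbatim, replacing unitary elements by normal elements. The only substantive change concerns the spectral decomposition of the two elements; the rest of the argument transfers without modification, because every automorphism deployed in that proof either permutes diagonal entries or acts as the identity on the diagonal, independently of whether the diagonal entries are complex units or arbitrary complex numbers.

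First I would deal with $\a$. By the definition of normality, $\a$ is a linear combination of mutually orthogonal minimal projections in $C_6$. Because $C_6$ has rank $3$, such a family has cardinality at most three, and can always be extended to a triple of mutually orthogonal minimal projections summing to $\1$ (the complement of a family of rank $k\le 3$ of orthogonal minimal projections is a projection of rank $3-k$, which splits further into $3-k$ mutually orthogonal minimal projections as in the proof of Theorem~\ref{T:spectral decomposition of unitary}$(i)$). So I may write $\a=\alpha_1\p_1+\alpha_2\p_2+\alpha_3\p_3$ with $\alpha_i\in\ce$ and $\p_1,\p_2,\p_3$ mutually orthogonal minimal projections summing to $\1$. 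Lemma~\ref{L:frame-automorphism}$(c)$ then furnishes a Jordan $*$-automorphism of $C_6$ carrying each $\p_i$ onto the $i$-th canonical diagonal matrix unit; after composing with this automorphism, $\a$ is a diagonal matrix with complex entries.

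Next I would handle $\b$. Since $\b$ is normal I can analogously write $\b=\beta_1\q_1+\beta_2\q_2+\beta_3\q_3$ with $\beta_j\in\ce$ and $\q_1,\q_2,\q_3$ mutually orthogonal minimal projections summing to $\1$. After possibly relabelling and applying $U_{1,2}$ or $U_{1,3}$ from Lemma~\ref{L:exchanging} (both of which just permute the diagonal entries of $\a$), I may assume $\q_1$ has nonzero $(1,1)$-entry. Theorem~\ref{T:minimal projections} then provides its explicit form, and I would run exactly the chain of reductions used for $\q_1$ in the proof of Theorem~\ref{T:simultaneous biq}: Proposition~\ref{P:OR-transform}$(a)$--$(b)$ together with Proposition~\ref{P:automorphisms-C6} bring the off-diagonal entries of $\q_1$ into $\span\{e_0,e_1\}$, hence into the quaternions. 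The threefold case analysis of $\q_2$ (using Proposition~\ref{P:OR-transform}$(c)$ and Corollary~\ref{cor:C6-automorphisms}) then forces its entries to be quaternions as well. Since $\q_3=\1-\q_1-\q_2$, its entries are automatically quaternions, and therefore $\b=\sum_j\beta_j\q_j$ has biquaternion entries.

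The one point requiring explicit verification, and in my view the only real obstacle, is to confirm that each automorphism $\widetilde{S}$ of Proposition~\ref{P:automorphisms-C6} and each variant $\widetilde{U}_j$ of Corollary~\ref{cor:C6-automorphisms} preserves every diagonal matrix (and not merely every scalar multiple of $\1$). This is visible from their explicit formulas: on the diagonal they act as the identity and they send each off-diagonal slot into the corresponding off-diagonal slot, so the reductions applied in the previous step all leave $\a$ diagonal. Once this observation is in place the argument of Theorem~\ref{T:simultaneous biq} transfers line by line; no conceptually new ingredient is needed.
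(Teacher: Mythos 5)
Your proposal is correct and matches the paper, which itself proves this lemma by simply observing that the argument of Theorem~\ref{T:simultaneous biq} carries over verbatim once the spectral decomposition of a unitary is replaced by that of a normal element (complex coefficients instead of complex units). Your explicit check that the automorphisms from Proposition~\ref{P:automorphisms-C6}, Corollary~\ref{cor:C6-automorphisms} and Lemma~\ref{L:exchanging} preserve the set of diagonal matrices is exactly the point the paper's proof of Theorem~\ref{T:simultaneous biq} already relies on.
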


We now deduce an easy consequence.

\begin{cor}\label{cor:x-biq}
Let $\x\in C_6$. Then there is a Jordan $*$-automorphism $T:C_6\to C_6$ such that the entries of $T(\x)$ are biquaternions.
\end{cor}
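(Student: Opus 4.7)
The plan is to reduce this statement to the already-proved Lemma~\ref{L:simultaneous quat}, which handles the case of a pair of normal elements. The key observation is that any $\x\in C_6$ decomposes as $\x=\a+i\b$, where
$$\a=\tfrac{1}{2}(\x+\x^*)\quad\text{and}\quad \b=\tfrac{1}{2i}(\x-\x^*)$$
are self-adjoint elements of $C_6$. Self-adjoint elements are normal (they admit the spectral decomposition furnished by Lemma~\ref{L:spectral decomposition of s-a}), so the pair $\a,\b$ falls within the scope of Lemma~\ref{L:simultaneous quat}.

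Applying that lemma, I would obtain a Jordan $*$-automorphism $T:C_6\to C_6$ such that $T(\a)$ is a diagonal matrix (with real entries, since $\a$ is self-adjoint and Jordan $*$-automorphisms preserve self-adjointness) and all entries of $T(\b)$ are biquaternions. Since $T$ is complex-linear,
$$T(\x)=T(\a)+iT(\b).$$
The diagonal matrix $T(\a)$ lies in $H_3(\Ha_C)$ (its entries are complex numbers, which sit inside $\A_2=\Ha_C$ via the canonical inclusion $\A_0\subset\A_2$). Because $H_3(\Ha_C)$ is a complex subspace of $C_6$ (the involution $^\inv$ on $\A_n$ is complex-linear, so $H_3(\Ha_C)$ is stable under multiplication by $i$), the element $T(\x)$ belongs to $H_3(\Ha_C)$ as well.

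There is essentially no hard step here: the only nontrivial input is Lemma~\ref{L:simultaneous quat}, which in turn is the natural self-adjoint analogue of Theorem~\ref{T:simultaneous biq} and inherits the same proof strategy (spectral decomposition, case analysis on minimal projections via Theorem~\ref{T:minimal projections}, and application of the automorphisms constructed in Section~\ref{sec:auto-c6}). The only minor point worth verifying explicitly is that biquaternions form a \emph{complex} subalgebra of $\O$, so that the complex linear combination $T(\a)+iT(\b)$ remains biquaternion-valued; this is immediate from the Cayley--Dickson construction, where $\Ha_C=\A_2$ is by definition a complex algebra sitting inside $\O=\A_3$.
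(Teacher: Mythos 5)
Your proof is correct and coincides with the paper's own argument: decompose $\x=\a+i\b$ into self-adjoint (hence normal) parts and apply Lemma~\ref{L:simultaneous quat}. The extra checks you carry out (that diagonal matrices and complex linear combinations of biquaternion-valued matrices stay in $H_3(\Ha_C)$) are exactly the details the paper leaves implicit.
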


\begin{proof}
Observe that $\x=\a+i\b$, where $\a,\b$ are self-adjoint (and hence normal) elements of $C_6$ and use Lemma~\ref{L:simultaneous quat}.
\end{proof}

In the next theorem we introduce determinant of a general element of $C_6$ and collect some basic properties of this notion.

\begin{thm}\label{T:dt in C6}
 Let $\x\in C_6$.
 \begin{enumerate}[$(a)$]
     \item There is a unitary element $\uu\in C_6$ such that $\x=\J{\uu}{\x}{\uu}$.
     \item Let $\uu\in C_6$ be a unitary element provided by assertion $(a)$. Then:
       \begin{enumerate}[$(i)$]
      \item There are real numbers $\alpha_1,\alpha_2,\alpha_3$ and mutually orthogonal minimal tripotents $\uu_1,\uu_2,\uu_3$ 
such that $\uu_j\le\uu$ for $j=1,2,3$ and $\x=\alpha_1\uu_1+\alpha_2\uu_2+\alpha_3\uu_3$.
\item The representation from $(i)$ is unique in the natural sense: The triple $(\alpha_1,\alpha_2,\alpha_3)$ is uniquely determined up to reordering. Further, for any real number $\alpha$ the sum $\sum_{\alpha_j=\alpha}\uu_j$ is also uniquely determined.
  \end{enumerate}
\item Let $\uu\in C_6$ be a unitary element provided by $(a)$. Let us fix the representation of $\x$ as in $(b)$. Set
$$\dt_{\uu}\x=\alpha_1\alpha_2\alpha_3.$$
In case $\x$ is unitary, this quantity equals to $\dt_{\uu}\x$ from Section~\ref{sec:unitaries}.

In case $\x$ and $\uu$ belong to $H_3(\Ha_C)$, we have $\dt_{\uu}\x=\dt_{3,\uu}\x$.
\item  The quantity
$$\dt\x=\dt_{\uu}\x\cdot\dt\uu$$
does not depend on the particualr choince of $\uu\in C_6$ provided by $(a)$. 

Moreover, if $\x$ is unitary, this quantity coincides with $\dt\x$ from Section~\ref{sec:unitaries}.

If $\x\in H_3(\Ha_C)$, then $\dt\x=\dt_3\x$.

\item Jordan $*$-automorphisms of $C_6$ preserves values of $\dt$.
 \end{enumerate}
\end{thm}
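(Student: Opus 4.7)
I plan to prove the five assertions in order. Parts $(a)$ and $(b)$ rely on the Jordan (singular-value) decomposition in the rank-$3$ JB$^*$-triple $C_6$: each $\x$ admits $\x=s_1\f_1+s_2\f_2+s_3\f_3$ with $s_j\ge 0$ and $\f_1,\f_2,\f_3$ forming a frame of mutually orthogonal minimal tripotents (completing arbitrarily if some $s_j$ vanish). Setting $\uu:=\f_1+\f_2+\f_3$ gives a complete, hence unitary, tripotent, and the orthogonality relations $\{\f_i,\f_j,\f_k\}=\delta_{ij}\delta_{jk}\f_j$ yield $\{\uu,\x,\uu\}=\x$, proving $(a)$. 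For $(b)$ I apply Lemma~\ref{L:spectral decomposition of s-a} inside the $\uu$-JB$^*$-algebra structure on $C_6$ (which is Jordan $^*$-isomorphic to the standard $C_6$ by Lemmata~\ref{L:sqrt} and~\ref{L:shift}), where $\x$ is self-adjoint. Part $(e)$ then follows routinely: a Jordan $^*$-automorphism $T$ carries the decomposition of $(b)$ into the analogous decomposition of $T\x$ relative to $T\uu$ with the same tuple $(\alpha_1,\alpha_2,\alpha_3)$, so $\dt_{T\uu}T\x=\dt_\uu\x$; together with Corollary~\ref{cor:dt after triple-aut} and $\dt T(\1)=\dt\1=1$, this yields $\dt T\x=\dt\x$.

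For $(c)$, if $\x$ is unitary the eigenvalues $\alpha_j$ in $(b)$ lie on the unit circle and are real, hence belong to $\{\pm 1\}$, so the decomposition coincides with the one from Section~\ref{sec:unitaries} inside the $\uu$-structure and uniqueness identifies the two definitions of $\dt_\uu\x$. If $\x,\uu\in H_3(\Ha_C)$, pick $\vv\in H_3(\Ha_C)$ with $\vv^2=\uu$ and apply $T$ from Lemma~\ref{L:isomorphism e-1}, a Jordan $^*$-isomorphism of the $\uu$-structure onto the standard one preserving $H_3(\Ha_C)$; then $T\x=\sum\alpha_j T\uu_j$ with $T\uu_j$ minimal projections of $H_3(\Ha_C)$. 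Lemma~\ref{L:rank two} says each $\widehat{T\uu_j}$ is a rank-$2$ projection in $M_6$, so $\widehat{T\x}$ carries each $\alpha_j$ as an eigenvalue of multiplicity $2$; Theorem~\ref{T:determinant}$(ix)$ then gives $\dt_3T\x=\alpha_1\alpha_2\alpha_3=\dt_\uu\x$, which equals $\dt_{3,\uu}\x$ by the definition in Proposition~\ref{P:product-dtn}.

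The heart of the theorem is $(d)$. I would first dispose of the special case $\x\in H_3(\Ha_C)$ with $\uu\in H_3(\Ha_C)$: such a $\uu$ exists because the tripotents appearing in the singular-value decomposition of $\x$ lie in the Jordan subtriple generated by $\x$, which is contained in $H_3(\Ha_C)$. For this choice, $(c)$ combined with Proposition~\ref{P:product-dtn}$(ii)$ and Lemma~\ref{L:dt=dt3} gives $\dt_\uu\x\cdot\dt\uu=\dt_{3,\uu}\x\cdot\dt_3\uu=\dt_3\x$, intrinsic to $\x$. For a second, arbitrary $\uu'\in C_6$ satisfying $(a)$ for $\x\in H_3(\Ha_C)$, I apply Lemma~\ref{L:simultaneous quat} to the two normal elements $\uu,\uu'$ to obtain a Jordan $^*$-automorphism $S$ with $S\uu,S\uu'\in H_3(\Ha_C)$, and invoke part $(e)$ to transport both products into the biquaternion setting where the first case applies. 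For general $\x\in C_6$, Corollary~\ref{cor:x-biq} combined with $(e)$ reduces to $\x\in H_3(\Ha_C)$. The two compatibility statements are then immediate: for unitary $\x$ the choice $\uu=\x$ gives $\dt_\uu\x=1$ and $\dt\uu$ equal to Section~\ref{sec:unitaries}'s $\dt\x$; for $\x\in H_3(\Ha_C)$ the special case above yields $\dt_3\x$. The main obstacle is precisely the simultaneous reduction needed here: $\x$ need not be normal in the standard $C_6$-structure, so Lemma~\ref{L:simultaneous quat} is not directly available for the pair $(\x,\uu)$. I circumvent this by first employing Corollary~\ref{cor:x-biq} (which imposes no normality hypothesis on $\x$) to bring $\x$ into $H_3(\Ha_C)$, and only afterwards applying Lemma~\ref{L:simultaneous quat} on the pair of unitaries.
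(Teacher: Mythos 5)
Parts $(a)$, $(b)$, $(c)$ and $(e)$, together with your treatment of the special case of $(d)$ in which both $\x$ and the chosen unitary lie in $H_3(\Ha_C)$, follow essentially the paper's own route (the paper phrases $(a)$ via the range tripotent $r(\x)$ rather than a singular-value decomposition, but the argument is the same), and these parts are sound.

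The well-definedness claim in $(d)$, however, has a genuine gap. You must show $\dt_{\uu}\x\cdot\dt\uu=\dt_{\uu'}\x\cdot\dt\uu'$ for an arbitrary second admissible unitary $\uu'$, and your plan is: first move $\x$ into $H_3(\Ha_C)$ via Corollary~\ref{cor:x-biq}, then apply Lemma~\ref{L:simultaneous quat} to the pair of unitaries $(\uu,\uu')$ so that ``the first case applies.'' But the Jordan $*$-automorphism $S$ produced for the pair $(\uu,\uu')$ gives no control whatsoever over $S\x$: such an automorphism does not preserve $H_3(\Ha_C)$, so after applying $S$ the element $S\x$ may again have genuinely octonionic entries, and your first case --- which requires the element \emph{and} the unitary to lie in $H_3(\Ha_C)$ simultaneously --- is not applicable to the pair $(S\x,S\uu')$. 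The two reductions do not compose; what is needed is a single automorphism taking $\x$ and $\uu'$ into $H_3(\Ha_C)$ at the same time, and since $\x$ need not be normal this cannot be obtained from Lemma~\ref{L:simultaneous quat} directly --- precisely the obstacle you identify, but your proposed circumvention does not remove it. The paper's resolution is to normalize the \emph{first} unitary instead: choose a triple automorphism $S$ with $S(\uu)=\1$ and use Corollary~\ref{cor:dt after triple-aut} to track how $\dt\uu'$ transforms under $S$; this reduces the identity to the case $\uu=\1$, where the hypothesis $\x=\J{\1}{\x}{\1}$ forces $\x$ to be self-adjoint, hence normal. Only then is Lemma~\ref{L:simultaneous quat} invoked, applied to the pair $(\x,\uu')$ of normal elements, which does place both of them in $H_3(\Ha_C)$ simultaneously and allows the biquaternionic case (Proposition~\ref{P:product-dtn}$(ii)$ together with $(c)$ and Lemma~\ref{L:dt=dt3}) to finish the argument. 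You should restructure your reduction in this order; the remaining compatibility statements in $(d)$ then go through as you describe.
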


\begin{proof}
Let $r(\x)$ be the range tripotent of $\x$ (see, e.g., \cite[beginning of Section 2]{BurgosFerGarMarPe2008}).  Since $C_6$ has finite dimension, $r(\x)$ exists and belongs to the JB$^*$-subtriple of $C_6$ generated by $\x$.

Then $\x$ is a positive element in $(C_6)_2(r(\x))$, in particular 
$$\x=\J{r(\x)}{\x}{r(\x)}.$$
Let $\uu\in C_6$ be a unitary element such that $r(\x)\le\uu$. It exists, for example, by \cite[Proposition 3.4 and Lemma 3.2$(d)$]{Finite}. 
By Peirce arithmetic we easily deduce $$\x=\J{\uu}{\x}{\uu},$$
which completes the proof of assertion $(a)$.

Assertion $(b)$ follows from Lemma~\ref{L:spectral decomposition of s-a}
applied to the JB$^*$-algebra $(C_6)_2(\uu)$ (which is Jordan $*$-isomorphic to $C_6$).

Let us define $\dt_{\uu}\x$ as in $(c)$. Assume $\x$ is unitary. The decompositions of $\x$ introduced before Theorem~\ref{T:product} coincides with the decomposition from $(b)$,
so the two versions of $\dt_{\uu}\x$ coincide.

Next assume that both $\uu$ and $\x$ belong to $H_3(\Ha_C)$. Let $T:M_3(\Ha_C)\to M_3(\Ha_C)$ be a linear mapping with properties from Lemma~\ref{L:isomorphism e-1} (with $\uu$ in place of $\e$). Then (see Proposition~\ref{P:product-dtn})
$$\dt_{3,\uu}\x=\dt_3(T\x).$$
Further,
$$T\x=\alpha_1 T\uu_1+\alpha_2 T\uu_2+\alpha_3 T\uu_3.$$

Having in mind that $T$ is a $*$-isomorphism of $M_{n}(\Ha_C)$ equipped with the product $\cdot_{\uu}$ and the involution $^{{*}_{\uu}}$ onto $M_{n}(\Ha_C)$ equipped with the standard structure of a C$^*$-algebra, we deduce that $T\uu_1, T\uu_2$ and $T\uu_3$ are minimal projections in $H_3(\Ha_C)$, so the above formula is the spectral decomposition of the self-adjoint element $T\x$ in $M_3(\Ha_C)$. Now it follows from Lemma~\ref{L:rank two} and Theorem~\ref{T:determinant}$(ix)$ that
$$\dt_3(T\x)= \alpha_1\alpha_2\alpha_3 =\dt_{\uu}\x.$$
This completes the proof of assertion $(c)$.

Let us continue by assertion $(d)$. Fix two unitary elements $\uu,\vv\in C_6$ such that
$$\x=\J{\uu}{\x}{\uu}=\J{\vv}{\x}{\vv}.$$
We will show that
$$\dt_{\uu}\x\cdot\dt\uu=\dt_{\vv}\x\cdot\dt\vv.$$
To this end fix a triple automorphism $S$ of $C_6$ such that $S(\uu)=\1$. Clearly
$$\dt_{\uu}\x=\dt_{S(\uu)} S(\x)=\dt_{\1} S(\x)\mbox{ and }\dt_{\vv}\x=\dt_{S(\vv)} S(\x).$$
Moreover, by Corollary~\ref{cor:dt after triple-aut}
$$\dt \vv=\dt S^{-1}(S(\vv))= \dt S(\vv)\cdot\dt S^{-1}(\1)=\dt S(\vv)\cdot\dt\uu.$$
It follows that we may assume without loss of generality that $\uu=\1$, i.e., it is enough to prove that
$$\dt_{\1}\x=\dt_{\vv}\x\cdot\dt\vv$$
whenever $\x\in C_6$ is self-adjoint and $\vv\in C_6$ is a unitary element with $\x=\J{\vv}{\x}{\vv}$.

All these properties are preserved by Jordan $*$-automorphisms. By Lemma~\ref{L:simultaneous quat} we may assume that both $\x$ and $\vv$ belong to $H_3(\Ha_C)$. In this case assertion $(c)$ yields $\dt_{\1}\x=\dt_3\x$ and $\dt_{\vv}\x=\dt_{3,\vv}\x$,  Lemma~\ref{L:dt=dt3} says that $\dt \uu=\dt_3\uu$ and so we may conclude by Proposition~\ref{P:product-dtn}$(ii)$. 

If $\x$ is unitary, the coincidence of the two formulae for $\dt\x$ follows from $(b)$ and Theorem~\ref{T:product}.

Further, assume $\x\in H_3(\Ha_C)$. Then $r(\x)\in H_3(\Ha_C)$, so we may find $\uu$ in $H_3(\Ha_C)$ as well. Then we conclude by $(c)$, Lemma~\ref{L:dt=dt3} and Proposition~\ref{P:product-dtn}.

Finally, assertion $(e)$ is obvious.
\end{proof}

Let us collect some further properties of the mapping $\dt$ on $C_6$ defined by Theorem \ref{T:dt in C6}$(d)$.

\begin{prop}
Let $T:C_6\to C_6$ be a triple automorphism. Then
$$\dt T(\x)=\dt \x\cdot \dt T(\1), \mbox{ for all }\x\in C_6.$$
\end{prop}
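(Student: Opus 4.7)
The plan is to reduce the general case to the already established case of unitary elements, namely Corollary~\ref{cor:dt after triple-aut}, by invoking the definition of $\dt\x$ from Theorem~\ref{T:dt in C6}$(d)$ and exploiting the fact that triple automorphisms preserve all the triple-theoretic structure used in that definition.

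First, given $\x\in C_6$, by Theorem~\ref{T:dt in C6}$(a)$ I can pick a unitary element $\uu\in C_6$ such that $\x=\J{\uu}{\x}{\uu}$, and by Theorem~\ref{T:dt in C6}$(b)$ write
$$\x=\alpha_1\uu_1+\alpha_2\uu_2+\alpha_3\uu_3,$$
where $\alpha_1,\alpha_2,\alpha_3\in\er$ and $\uu_1,\uu_2,\uu_3$ are mutually orthogonal minimal tripotents with $\uu_j\le\uu$. By definition
$$\dt_{\uu}\x=\alpha_1\alpha_2\alpha_3,\qquad \dt\x=\dt_{\uu}\x\cdot\dt\uu.$$

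Now I would show that $T(\uu)$ is a valid choice for $T(\x)$ in the sense of Theorem~\ref{T:dt in C6}$(a)$--$(b)$. Since $T$ preserves the triple product, from $\x=\J{\uu}{\x}{\uu}$ one gets $T(\x)=\J{T(\uu)}{T(\x)}{T(\uu)}$, and $T(\uu)$ is unitary (being a complete tripotent and the image of one). Applying $T$ to the above decomposition yields
$$T(\x)=\alpha_1 T(\uu_1)+\alpha_2 T(\uu_2)+\alpha_3 T(\uu_3),$$
and since orthogonality, minimality and the partial order on tripotents are all triple-theoretic notions, $T(\uu_1),T(\uu_2),T(\uu_3)$ are mutually orthogonal minimal tripotents with $T(\uu_j)\le T(\uu)$. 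Hence, using Theorem~\ref{T:dt in C6}$(c)$ and $(d)$ applied to $T(\x)$ with the unitary $T(\uu)$,
$$\dt_{T(\uu)}T(\x)=\alpha_1\alpha_2\alpha_3=\dt_{\uu}\x,\qquad \dt T(\x)=\dt_{T(\uu)}T(\x)\cdot \dt T(\uu).$$

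Finally, I would invoke Corollary~\ref{cor:dt after triple-aut} to the unitary $\uu$, getting $\dt T(\uu)=\dt\uu\cdot\dt T(\1)$. Combining everything:
$$\dt T(\x)=\dt_{\uu}\x\cdot\dt T(\uu)=\dt_{\uu}\x\cdot\dt\uu\cdot\dt T(\1)=\dt\x\cdot\dt T(\1),$$
which is the desired formula. There is no real obstacle here: the proof is a pure bookkeeping argument once one recognizes that the whole machinery in Theorem~\ref{T:dt in C6} is invariant under triple automorphisms, so the only substantive input is the already-proved unitary case (Corollary~\ref{cor:dt after triple-aut}).
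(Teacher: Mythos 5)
Your proof is correct and follows essentially the same route as the paper: choose a unitary $\uu$ with $\x=\J{\uu}{\x}{\uu}$, observe that $T(\uu)$ serves the same role for $T(\x)$ with $\dt_{T(\uu)}T(\x)=\dt_{\uu}\x$, and then apply Corollary~\ref{cor:dt after triple-aut} to $\uu$. The paper's proof is just a more condensed version of the same bookkeeping, leaving implicit the spectral-decomposition argument that you spell out.
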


\begin{proof}
Fix a unitary element $\uu\in C_6$ such that $\x=\J{\uu}{\x}{\uu}$. Then $T(\uu)$ is a unitary element and $T(\x)=\J{T(\uu)}{T(\x)}{T(\uu)}$. Therefore
$$\dt T(\x)=\dt_{T(\uu)}T(\x)\cdot \dt T(\uu)=
\dt_{\uu}\x \cdot \dt \uu\cdot \dt T(\1)=\dt\x\cdot \dt T(\1),$$
where we used Corollary~\ref{cor:dt after triple-aut}.
\end{proof}

\begin{prop}
Let $\x\in C_6$. The following assertions are equivalent.
\begin{enumerate}[$(1)$]
    \item $\dt\x\ne0$;
    \item $r(\x)$ is a unitary element;
    \item $\x$ is invertible in $C_6$.
\end{enumerate}
\end{prop}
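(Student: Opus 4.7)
The plan is to pull the question back to the spectral decomposition supplied by Theorem~\ref{T:dt in C6}. First I would fix a unitary element $\uu\in C_6$ with $\x=\J{\uu}{\x}{\uu}$, guaranteed by part $(a)$, and by part $(b)$ write
$$\x=\alpha_1\uu_1+\alpha_2\uu_2+\alpha_3\uu_3,$$
where $\alpha_j\in\er$ and $\uu_1,\uu_2,\uu_3$ are pairwise orthogonal minimal tripotents with $\uu_j\le\uu$. By construction $\dt\x=\alpha_1\alpha_2\alpha_3\cdot\dt\uu$, and $\dt\uu$ is a product of three complex units, hence itself a complex unit. Thus $(1)$ is equivalent to the condition that none of the $\alpha_j$ vanishes.

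For $(1)\Leftrightarrow(2)$ I would verify that the range tripotent is given explicitly by
$$r(\x)=\sum_{\alpha_j\ne 0}\operatorname{sgn}(\alpha_j)\,\uu_j.$$
Each summand $\operatorname{sgn}(\alpha_j)\uu_j$ is a minimal tripotent and the summands are pairwise orthogonal, so $r(\x)$ is a tripotent; in its Peirce-$2$ subspace $\x$ writes as $\sum_{\alpha_j\ne 0}|\alpha_j|\cdot\operatorname{sgn}(\alpha_j)\uu_j$, a positive combination of the orthogonal projections $\operatorname{sgn}(\alpha_j)\uu_j$, while deleting any summand would push some nonzero $\alpha_k\uu_k$ into the Peirce-$0$ subspace and spoil positivity. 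Hence this is the smallest tripotent making $\x$ positive in its Peirce-$2$ part, confirming the formula. The right-hand side is a sum of at most three pairwise orthogonal minimal tripotents; since $C_6$ has rank three and every complete tripotent in $C_6$ is unitary (as recalled in the discussion preceding Lemma~\ref{L:frame-automorphism}), $r(\x)$ is unitary if and only if all three summands appear, i.e.\ if and only if all $\alpha_j$ are nonzero.

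For $(1)\Leftrightarrow(3)$ my key observation is that the operator $Q(\x)\colon\w\mapsto\J{\x}{\w}{\x}$ depends only on the JB$^*$-triple structure of $C_6$, not on the choice of JB$^*$-algebra unit. From the defining formula \eqref{eq triple product JBstar algebras} one reads $U_\x(\z)=\J{\x}{\z^*}{\x}=Q(\x)(\z^*)$ in the JB$^*$-algebra $C_6$ with unit $\1$; the parallel identity $U_\x^{(\uu)}(\z)=Q(\x)(\z^{*_\uu})$ holds in the JB$^*$-algebra $(C_6)_2(\uu)$ with unit $\uu$, since the two underlying triple products coincide. Because the involutions $(\cdot)^*$ and $(\cdot)^{*_\uu}$ are conjugate-linear bijections of $C_6$, $U_\x$ is bijective if and only if $Q(\x)$ is bijective if and only if $U_\x^{(\uu)}$ is bijective. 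By the standard characterization of invertibility in a unital Jordan algebra (invertible iff the $U$-operator is a bijection), $\x$ is therefore invertible in $C_6$ if and only if it is invertible in $(C_6)_2(\uu)$; and in the latter $\x$ is self-adjoint with spectral decomposition $\sum\alpha_j\uu_j$, so it is invertible precisely when no $\alpha_j$ vanishes, the associative unital JB$^*$-subalgebra generated by $\x$ being a finite-dimensional commutative C$^*$-algebra whose spectrum is $\{\alpha_1,\alpha_2,\alpha_3\}$. This yields $(1)\Leftrightarrow(3)$, the most delicate point being the explicit identification of $r(\x)$ and the triple-theoretic factorisation $U_\x=Q(\x)\circ(\cdot)^*$; once these are in place the rest is straightforward bookkeeping.
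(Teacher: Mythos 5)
Your proof is correct, but it follows a genuinely different route from the paper's. The paper works directly with the spectral resolution relative to $r(\x)$ itself, writing $\x=\sum_{j=1}^k\alpha_j\uu_j$ with $\alpha_j>0$ and $\sum_j\uu_j=r(\x)$, so that $(1)\Leftrightarrow(2)$ becomes the observation that $k=3$ exactly when $r(\x)$ is unitary, with no need for your explicit formula $r(\x)=\sum_{\alpha_j\ne0}\operatorname{sgn}(\alpha_j)\uu_j$; it then proves $(2)\Rightarrow(3)$ by exhibiting the Jordan inverse $\y=\sum_j\alpha_j^{-1}\uu_j^*$ and checking $\x\circ\y=\1$ and $\x^2\circ\y=\x$ by a direct triple-product computation, and $(3)\Rightarrow(2)$ by an orthogonality argument: if $k<3$ there is a minimal tripotent $\w\perp\x$, so $U_\x(\w^*)=\J{\x}{\w}{\x}=0$, contradicting bijectivity of $U_\x$. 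Your transfer of invertibility to the isotope $(C_6)_2(\uu)$ through the factorisation $U_\x=Q(\x)\circ(\cdot)^*$ is a clean conceptual alternative: it trades the explicit verification of the inverse for the unit-independence of the triple product on $(C_6)_2(\uu)$ (which the paper records via Kaup's theorem) plus invertibility of a self-adjoint element in a finite-dimensional commutative C$^*$-subalgebra. Two places deserve one more sentence each, though neither is a real gap. First, the minimality of $\sum_{\alpha_j\ne0}\operatorname{sgn}(\alpha_j)\uu_j$ must be checked against all tripotents $e$ with $\x$ positive in $(C_6)_2(e)$, not only against sub-sums of your expression; this is the standard formula for the range tripotent of an element with a given spectral resolution and should be cited or proved as such. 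Second, the implication that some $\alpha_k=0$ forces non-invertibility in $(C_6)_2(\uu)$ is smoother via your own operator $Q(\x)$ than via spectral permanence for the commutative subalgebra: $\alpha_k=0$ gives $\uu_k\perp\x$, hence $Q(\x)(\uu_k)=0$ and $U_\x$ cannot be bijective --- which is precisely the paper's argument for $(3)\Rightarrow(2)$.
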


\begin{proof}
Recall that $\x$ is a positive element in $(C_6)_2(r(\x))$ and $r(\x)$ is the smallest tripotent with this property. Hence, the spectral 
decomposition theorem (cf. Proposition~\ref{P:spectral}) says that
\begin{equation}\label{eq spectral resolution of x in 10.6} \x=\sum_{j=1}^k \alpha_j\uu_j,
\end{equation}
where the $\uu_j$'s are mutually orthogonal minimal tripotents with sum $r(\x)$ and $\alpha_j>0$ for each $j$.

Note that $k\le3$ and $k=3$ if and only if $r(\x)$ is unitary. Now it easily follows that $(1)\Leftrightarrow(2)$.

$(2)\Rightarrow(3)$ Assume that $r(\x)$ is unitary. Then $\x$ has the above form and $k=3$. Its inverse is
$$\y=\sum_{j=1}^3 \alpha_j^{-1}\uu_j^*.$$
Indeed,
$$\x\circ\y=\sum_{j=1}^3 \uu_j\circ\uu_j^*
=\sum_{j=1}^3\J {\uu_j}{\uu_j}{\1}=\sum_{j=1}^3\J {\uu_j}{r(\x)}{\1}=\J{r(\x)}{r(\x)}{\1}=\1.$$
Further,
$$\begin{aligned}
\x^2\circ \y&=\J{\y}{\1}{\J{\x}{\1}{\x}} 
=\J{\J{\y}{\1}{\x}}{\1}{\x}-\J{\x}{\J{\1}{\y}{\1}}{\x}+\J{\x}{\1}{\J{\y}{\1}{\x}}
\\&=\J{\y\circ\x}{\1}{\x}-\J{\x}{\y^*}{\x}+\J{\x}{\1}{\y\circ\x}\\&=\J{\1}{\1}{\x}-\J{\x}{\y^*}{\x}+\J{\x}{\1}{\1}=2\x-\J{\x}{\y^*}{\x}.
\end{aligned}$$
Since
$$\begin{aligned}
\J{\x}{\y^*}{\x}&=\J{\sum_{j=1}^3 \alpha_j\uu_j}{\sum_{j=1}^3 \alpha_j^{-1}\uu_j}{\sum_{j=1}^3 \alpha_j\uu_j}=\sum_{j=1}^3\alpha_j\J{\uu_j}{\uu_j}{\uu_j}\\&=\sum_{j=1}^3\alpha_j\uu_j=\x,\end{aligned}$$
we deduce that $\x^2\circ\y=\x$. So, $\y$ is the Jordan inverse of $\x$.

$(3)\Rightarrow(2)$ 
 Suppose that $k<3$ in \eqref{eq spectral resolution of x in 10.6}. Under this assumption there exists a minimal tripotent $\w\in C_6$ which is orthogonal to $\x$ (equivalently to $\uu_j$ for all $1\leq j\leq k$). It is known that in a Jordan algebra an element $\x$ is invertible if and only if the mapping $U_{\x}$ is (cf. \cite[Theorem 4.1.3]{Cabrera-Rodriguez-vol1}). By orthogonality $$U_{\x} (\w^*) = \{\x,\w,\x\} =0,$$ contradicting the invertibility of $U_{\x}$. Therefore $k=3,$ and hence $r(\x) = \sum_{j=1}^3 \uu_j$ is a unitary element in $C_6$ because the latter has rank $3$.

 Actually, even more is true -- the range tripotent of each invertible element in a unital JB$^*$-algebra, $B$, is a unitary element belonging to $B$ (cf. \cite[Remark 2.3]{JamPeSidTah2015}).
\end{proof}

\begin{prop}
Let $\x\in C_6$. Then $\lambda\mapsto\dt(\lambda\cdot\1-\x)$ is a complex polynomial of degree $3$ with coefficient $1$ at $\lambda^3$ and coefficient $-\dt\x$ at $\lambda^0$.
\end{prop}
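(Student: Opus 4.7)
The plan is to reduce the statement to the already established polynomial property of $\dt_n$ on hermitian matrices of biquaternions (Theorem~\ref{T:determinant}$(viii)$). By Corollary~\ref{cor:x-biq} there is a Jordan $*$-automorphism $T:C_6\to C_6$ such that $T(\x)\in H_3(\Ha_C)$. Since $T$ is unital, $T(\lambda\1-\x)=\lambda\1-T(\x)$ for every $\lambda\in\ce$, and since Jordan $*$-automorphisms preserve $\dt$ by Theorem~\ref{T:dt in C6}$(e)$, we have
$$\dt(\lambda\1-\x)=\dt(\lambda\1-T(\x)),\quad \lambda\in\ce.$$
Thus it suffices to prove the claim under the extra assumption that $\x\in H_3(\Ha_C)$.

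Under this assumption, Theorem~\ref{T:dt in C6}$(d)$ gives $\dt \y=\dt_3\y$ for every $\y\in H_3(\Ha_C)$, so in particular
$$\dt(\lambda\1-\x)=\dt_3(\lambda\1-\x),\quad \lambda\in\ce.$$
Now I apply Theorem~\ref{T:determinant}$(viii)$ with $\y=\1$ and with $-\x$ in place of $\x$: the right-hand side is a complex polynomial of degree at most $3$, whose coefficient at $\lambda^3$ is $\dt_3\1$ and whose coefficient at $\lambda^0$ is $\dt_3(-\x)$.

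To finish it remains to identify these two coefficients. A trivial induction using the inductive formula Theorem~\ref{T:determinant}$(iii)$ shows $\dt_n\1=1$ for every $n$ (the off-diagonal entries of $\1$ vanish, so the Schur complement step reduces $\dt_{n+1}\1$ to $\dt_n\1$, with base case $\dt_1(1)=1$). By Theorem~\ref{T:determinant}$(v)$ we have $\dt_3(-\x)=(-1)^3\dt_3\x=-\dt\x$. Therefore $\lambda\mapsto\dt(\lambda\1-\x)$ is a complex polynomial of degree at most $3$ with leading coefficient $1$ and constant coefficient $-\dt\x$; since the leading coefficient is nonzero, the degree is exactly $3$. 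There is no real obstacle here—the work has been done in Sections~\ref{sec:dt-n} and~\ref{sec:dt in C6}—so the only thing to watch is that the reduction via $T$ is compatible with the affine substitution $\x\mapsto\lambda\1-\x$, which it is because $T(\1)=\1$.
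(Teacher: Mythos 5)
Your proof is correct and follows essentially the same route as the paper: reduce to $\x\in H_3(\Ha_C)$ via Corollary~\ref{cor:x-biq} and the invariance of $\dt$ under Jordan $*$-automorphisms (Theorem~\ref{T:dt in C6}$(e)$), identify $\dt$ with $\dt_3$ on $H_3(\Ha_C)$ by Theorem~\ref{T:dt in C6}$(d)$, and then invoke Theorem~\ref{T:determinant}$(viii)$ and $(v)$. You have merely spelled out the details (unitality of $T$, $\dt_3\1=1$, the sign from $(v)$) that the paper's two-line proof leaves implicit.
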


\begin{proof}
If $\x\in H_3(\Ha_C)$, the statement follows from Theorem~\ref{T:determinant}$(viii)$,$(v)$ and Theorem~\ref{T:dt in C6}$(d)$. The general case may be reduced to this case using Corollary~\ref{cor:x-biq} and Theorem~\ref{T:dt in C6}$(e)$.
\end{proof}

\smallskip\smallskip

\textbf{Acknowledgements} The first author was supported by the project OPVVV CAAS 
 CZ.02.1.01/0.0/0.0/16\_019/0000778.    Third author partially supported by MCIN / AEI / 10. 13039 / 501100011033 / FEDER ``Una manera de hacer Europa'' project no.  PGC2018-093332-B-I00, Junta de Andaluc\'{\i}a grants FQM375, A-FQM-242-UGR18 and PY20$\underline{\ }$ 00255, and by the IMAG--Mar{\'i}a de Maeztu grant CEX2020-001105-M / AEI / 10.13039 / 501100011033.

\bibliography{hkp}\bibliographystyle{acm}

\def\cprime{$'$} \def\cprime{$'$}
\begin{thebibliography}{10}

\bibitem{BaTi}
{\sc Barton, T., and Timoney, R.~M.}
\newblock Weak{$^\ast$}-continuity of {J}ordan triple products and its
  applications.
\newblock {\em Math. Scand. 59}, 2 (1986), 177--191.

\bibitem{BeLoPeRo2004}
{\sc Becerra~Guerrero, J., L\'{o}pez~P\'{e}rez, G., Peralta, A.~M., and
  Rodr\'{\i}guez-Palacios, A.}
\newblock Relatively weakly open sets in closed balls of {B}anach spaces, and
  real {${\rm JB}^*$}-triples of finite rank.
\newblock {\em Math. Ann. 330}, 1 (2004), 45--58.

\bibitem{braun1978holomorphic}
{\sc Braun, R., Kaup, W., and Upmeier, H.}
\newblock A holomorphic characterization of {J}ordan {$C\sp*$}-algebras.
\newblock {\em Math. Z. 161}, 3 (1978), 277--290.

\bibitem{BurgosFerGarMarPe2008}
{\sc Burgos, M., Fern\'{a}ndez-Polo, F.~J., Garc\'{e}s, J.~J.,
  Mart\'{\i}nez~Moreno, J., and Peralta, A.~M.}
\newblock Orthogonality preservers in {$C^*$}-algebras, {${\rm JB}^*$}-algebras
  and {${\rm JB}^*$}-triples.
\newblock {\em J. Math. Anal. Appl. 348}, 1 (2008), 220--233.

\bibitem{Cabrera-Rodriguez-vol1}
{\sc Cabrera~Garc\'{\i}a, M., and Rodr\'{\i}guez~Palacios, A.}
\newblock {\em Non-associative normed algebras. {V}ol. 1}, vol.~154 of {\em
  Encyclopedia of Mathematics and its Applications}.
\newblock Cambridge University Press, Cambridge, 2014.
\newblock The Vidav-Palmer and Gelfand-Naimark theorems.

\bibitem{Cabrera-Rodriguez-vol2}
{\sc Cabrera~Garc\'{i}a, M., and Rodr\'{i}guez~Palacios, A.}
\newblock {\em Non-associative normed algebras. {V}ol. 2}, vol.~167 of {\em
  Encyclopedia of Mathematics and its Applications}.
\newblock Cambridge University Press, Cambridge, 2018.
\newblock Representation theory and the Zel'manov approach.

\bibitem{chubook}
{\sc Chu, C.-H.}
\newblock {\em Jordan structures in geometry and analysis}, vol.~190 of {\em
  Cambridge Tracts in Mathematics}.
\newblock Cambridge University Press, Cambridge, 2012.

\bibitem{DouglasBook1998}
{\sc Douglas, R.~G.}
\newblock {\em Banach algebra techniques in operator theory}, second~ed.,
  vol.~179 of {\em Graduate Texts in Mathematics}.
\newblock Springer-Verlag, New York, 1998.

\bibitem{FlemingJamisonBookVol2}
{\sc Fleming, R.~J., and Jamison, J.~E.}
\newblock {\em Isometries on {B}anach spaces. {V}ol. 2}, vol.~138 of {\em
  Chapman \& Hall/CRC Monographs and Surveys in Pure and Applied Mathematics}.
\newblock Chapman \& Hall/CRC, Boca Raton, FL, 2008.
\newblock Vector-valued function spaces.

\bibitem{Friedman-Russo}
{\sc Friedman, Y., and Russo, B.}
\newblock Structure of the predual of a {$JBW^\ast$}-triple.
\newblock {\em J. Reine Angew. Math. 356\/} (1985), 67--89.

\bibitem{Finite}
{\sc Hamhalter, J., Kalenda, O. F.~K., and Peralta, A.~M.}
\newblock Finite tripotents and finite {${\rm JBW}^*$}-triples.
\newblock {\em J. Math. Anal. Appl. 490}, 1 (2020), article no. 124217, 65pp.

\bibitem{hamhalter2019mwnc}
{\sc Hamhalter, J., Kalenda, O. F.~K., Peralta, A.~M., and Pfitzner, H.}
\newblock Measures of weak non-compactness in preduals of von {N}eumann
  algebras and {$\rm JBW^\ast$}-triples.
\newblock {\em J. Funct. Anal. 278}, 1 (2020), article no. 108300, 69pp.

\bibitem{hanche1984jordan}
{\sc Hanche-Olsen, H., and St{\o}rmer, E.}
\newblock {\em Jordan operator algebras}, vol.~21.
\newblock Pitman Advanced Publishing Program, 1984.

\bibitem{harris1974bounded}
{\sc Harris, L.~A.}
\newblock Bounded symmetric homogeneous domains in infinite dimensional spaces.
\newblock In {\em Proceedings on {I}nfinite {D}imensional {H}olomorphy
  ({I}nternat. {C}onf., {U}niv. {K}entucky, {L}exington, {K}y., 1973)\/}
  (1974), pp.~13--40. Lecture Notes in Math., Vol. 364.

\bibitem{horn1987ideal}
{\sc Horn, G.}
\newblock Characterization of the predual and ideal structure of a {${\rm
  JBW}^*$}-triple.
\newblock {\em Math. Scand. 61}, 1 (1987), 117--133.

\bibitem{horn1987classification}
{\sc Horn, G.}
\newblock Classification of {JBW{$^*$}}-triples of type {${\rm I}$}.
\newblock {\em Math. Z. 196}, 2 (1987), 271--291.

\bibitem{horn1988classification}
{\sc Horn, G., and Neher, E.}
\newblock Classification of continuous {$JBW^*$}-triples.
\newblock {\em Trans. Amer. Math. Soc. 306}, 2 (1988), 553--578.

\bibitem{JamPeSidTah2015}
{\sc Jamjoom, F.~B., Siddiqui, A.~A., Tahlawi, H.~M., and Peralta, A.~M.}
\newblock Approximation and convex decomposition by extremals and the
  {$\lambda$}-function in {${\rm JBW}^*$}-triples.
\newblock {\em Q. J. Math. 66}, 2 (2015), 583--603.

\bibitem{kaup1983riemann}
{\sc Kaup, W.}
\newblock A {R}iemann mapping theorem for bounded symmetric domains in complex
  {B}anach spaces.
\newblock {\em Math. Z. 183}, 4 (1983), 503--529.

\bibitem{kaup1997real}
{\sc Kaup, W.}
\newblock On real {C}artan factors.
\newblock {\em Manuscripta Math. 92}, 2 (1997), 191--222.

\bibitem{LoSo}
{\sc L\~{o}hmus, J., and Sorgsepp, L.}
\newblock An introduction to associator quantization.
\newblock In {\em Non-associative algebra and its applications}, vol.~246 of
  {\em Lect. Notes Pure Appl. Math.} Chapman \& Hall/CRC, Boca Raton, FL, 2006,
  pp.~233--244.

\bibitem{loos1977bounded}
{\sc Loos, O.}
\newblock {\em Bounded symmetric domains and {J}ordan pairs}.
\newblock Lecture Notes. Univ. California at Irvine, 1977.

\bibitem{springer-veldcamp}
{\sc Springer, T.~A., and Veldkamp, F.~D.}
\newblock {\em Octonions, {J}ordan algebras and exceptional groups}.
\newblock Springer Monographs in Mathematics. Springer-Verlag, Berlin, 2000.

\bibitem{Wright1977}
{\sc Wright, J. D.~M.}
\newblock Jordan {$C\sp*$}-algebras.
\newblock {\em Michigan Math. J. 24}, 3 (1977), 291--302.

\end{thebibliography}

\end{document}